\documentclass[12pt]{amsart}
\usepackage{amscd,amsmath,amsthm,amssymb}
\usepackage{amsfonts,amssymb,amscd,amsmath,enumerate,verbatim}
\usepackage[left]{lineno}
\usepackage{pstricks, pst-plot,pst-3d}

\newpsstyle{fatline}{linewidth=1.5pt}
\newpsstyle{fyp}{fillstyle=solid,fillcolor=verylight}
\definecolor{verylight}{gray}{0.97}
\definecolor{light}{gray}{0.9}
\definecolor{medium}{gray}{0.85}
\definecolor{dark}{gray}{0.6}
\def\NZQ{\mathbb}               

\def\ZZ{{\NZQ Z}}
\def\RR{{\NZQ R}}

\def\frk{\mathfrak}               

\def\cc{{\frk c}}

\def\mm{{\frk m}}

\def\Dc{{\mathcal D}}
\def\Pc{{\mathcal P}}
\def\Qc{{\mathcal Q}}
\def\Mc{{\mathcal M}}
\def\Hc{{\mathcal H}}

\def\Bc{{\mathcal B}}

\def\Sc{{\mathcal S}}

\def\ab{{\mathbf a}}
\def\bb{{\mathbf b}}
\def\xb{{\mathbf x}}

\def\db{{\mathbf d}}
\def\eb{{\mathbf e}}

\def\opn#1#2{\def#1{\operatorname{#2}}} 
\opn\chara{char} \opn\length{\ell} \opn\pd{pd} \opn\rk{rk}
\opn\projdim{proj\,dim} \opn\injdim{inj\,dim} \opn\rank{rank}
\opn\depth{depth} \opn\grade{grade} \opn\height{height}
\opn\embdim{emb\,dim} \opn\codim{codim}
\opn\Cl{Cl}

\opn\Tr{Tr} \opn\bigrank{big\,rank}
\opn\superheight{superheight}\opn\lcm{lcm}
\opn\trdeg{tr\,deg}
	\opn\reg{reg} \opn\lreg{lreg} \opn\ini{in} \opn\lpd{lpd}
	\opn\size{size} \opn\sdepth{sdepth}
	\opn\link{link}\opn\fdepth{fdepth}\opn\lex{lex}
	\opn\tr{tr}
	\opn\type{type}
	\opn\gap{gap}
	\opn\arithdeg{arith-deg}
	\opn\revlex{revlex}
	\opn\div{div} \opn\Div{Div} \opn\cl{cl} \opn\Cl{Cl}
	\opn\Spec{Spec} \opn\Supp{Supp} \opn\supp{supp} \opn\Sing{Sing}
	\opn\Ass{Ass} \opn\Min{Min}\opn\Mon{Mon}
	\opn\Ann{Ann} \opn\Rad{Rad} \opn\Soc{Soc}
	\opn\Im{Im} \opn\Ker{Ker} \opn\Coker{Coker} \opn\Am{Am}
	\opn\Hom{Hom} \opn\Tor{Tor} \opn\Ext{Ext} \opn\End{End}
	\opn\Aut{Aut} \opn\id{id}
	
	\opn\nat{nat}
	\opn\pff{pf}
	\opn\Pf{Pf} \opn\GL{GL} \opn\SL{SL} \opn\mod{mod} \opn\ord{ord}
	\opn\Gin{Gin} \opn\Hilb{Hilb}\opn\sort{sort}
	\opn\PF{PF}\opn\Ap{Ap}
	\opn\mult{mult}
	\opn\bight{bight}
	\opn\div{div}
	\opn\Div{Div}
	\opn\aff{aff}
	\opn\relint{relint} \opn\st{st}
	\opn\lk{lk} \opn\cn{cn} \opn\core{core} \opn\vol{vol}  \opn\inp{inp} \opn\nilpot{nilpot}
	\opn\link{link} \opn\star{star}\opn\lex{lex}\opn\set{set}
	\opn\width{wd}
	\opn\Fr{F}
	\opn\QF{QF}
	\opn\G{G}
	\opn\type{type}\opn\res{res}
	\opn\conv{conv}
	\opn\Deg{Deg}
	\opn\Sym{Sym}
	\opn\Con{Con}
	\opn\gr{gr}
	
	\def\pot#1#2{#1[\kern-0.28ex[#2]\kern-0.28ex]}

	\opn\dirlim{\underrightarrow{\lim}}
	\opn\inivlim{\underleftarrow{\lim}}
	\let\to=\rightarrow
	
	\def\Implies{\ifmmode\Longrightarrow \else
		\unskip${}\Longrightarrow{}$\ignorespaces\fi}
	\def\implies{\ifmmode\Rightarrow \else
		\unskip${}\Rightarrow{}$\ignorespaces\fi}
	\def\iff{\ifmmode\Longleftrightarrow \else
		\unskip${}\Longleftrightarrow{}$\ignorespaces\fi}

	\let\:=\colon
	\newtheorem{Theorem}{Theorem}[section]
	\newtheorem{Lemma}[Theorem]{Lemma}
	\newtheorem{Corollary}[Theorem]{Corollary}

	\newtheorem{Example}[Theorem]{Example}

	\newtheorem{Conjecture}[Theorem]{Conjecture}

	\let\epsilon\varepsilon
	\let\kappa=\varkappa
	\def\qed{\ifhmode\textqed\fi
		\ifmmode\ifinner\quad\qedsymbol\else\dispqed\fi\fi}
	\def\textqed{\unskip\nobreak\penalty50
		\hskip2em\hbox{}\nobreak\hfil\qedsymbol
		\parfillskip=0pt \finalhyphendemerits=0}
	\def\dispqed{\rlap{\qquad\qedsymbol}}
	
	\opn\dis{dis}
	\def\pnt{{\raise0.5mm\hbox{\large\bf.}}}
	
	\opn\Lex{Lex}

\begin{document}
\title[Nearly Gorenstein polytopes]{Bounded powers of edge ideals: nearly Gorenstein polytopes}

\author[T.~Hibi]{Takayuki Hibi}
\author[S.~A.~ Seyed Fakhari]{Seyed Amin Seyed Fakhari}

\address{(Takayuki Hibi) Department of Pure and Applied Mathematics, Graduate School of Information Science and Technology, The University of Osaka, Suita, 
Osaka 565--0871, Japan}
\email{hibi@math.sci.osaka-u.ac.jp}
\address{(Seyed Amin Seyed Fakhari) Departamento de Matem\'aticas, Universidad de los Andes, Bogot\'a, Colombia}
\email{s.seyedfakhari@uniandes.edu.co}

\subjclass[2020]{52B20, 13H10, 05E40}

\keywords{Discrete polymatroid, Level polytope, Nearly Gorenstein polytope}

\begin{abstract}
A lattice polytope $\Pc \subset \RR^n$ of dimension $n$ is called nearly Gorenstein if $\Pc$ is normal and if its Ehrhart ring is a nearly Gorenstein ring.  We study nearly Gorenstein polytopes arising from discrete polymatroids of bounded powers of edge ideals. Especially, it is shown that every nearly Gorenstein polytope coming from either a complete graph or a complete bipartite graph is level.       
\end{abstract}	
\maketitle
\thispagestyle{empty}

\section*{Introduction}
A {\em lattice polytope} $\Pc \subset \RR^n$ is a convex polytope each of whose vertices belongs to $\ZZ^n$.  A lattice polytope $\Pc \subset \RR^n$ of dimension $n$ is called {\em normal} if, for each $N = 2,3, \ldots$ and for each $\ab \in N \Pc \cap \ZZ^n$, there are $\ab^{(1)}, \ldots, \ab^{(N)}$ belonging to $\Pc \cap \ZZ^n$ for which $\ab = \ab^{(1)} + \cdots + \ab^{(N)}$, where $N\Pc = \{N\ab : \ab \in \Pc\}$.  Let $T=K[x_1^\pm, \ldots, x_n^\pm, t]$ denote the Laurent polynomial ring in $n+1$ variables over a field $K$.  If $\ab = (a_1, \ldots, a_n) \in \ZZ^n$, then set $\xb^\ab t:=x_1^{a_1} \cdots x_n^{a_n} t \in T$.  Given a lattice polytope $\Pc \subset \RR^n$ of dimension $n$, we introduce the subring $K[\Pc]$ of $T$ generated by those Laurent monomials $\xb^\ab t$ with $\ab \in \Pc \cap \ZZ^n$.  When $\Pc$ is normal, we say that $K[\Pc]$ is the {\em Ehrhart ring} of $\Pc$.  Every Ehrhart ring is a homogeneous algebra over $K$ and is Cohen--Macaulay.  Consult \cite{HibiRedBook} for further information about Ehrhart rings.

Let $\Pc \subset \RR^n$ be a normal polytope of dimension $n$ and $\partial \Pc$ the boundary of $\Pc$.  Set
\begin{eqnarray*}
\mm &:=& (\{\xb^\ab t : \ab \in \Pc \cap \ZZ^n\}) \subset K[\Pc], \\ 
\omega_{K[\Pc]} &:=& (\{\xb^\ab t^N : \ab \in N(\Pc \setminus \partial \Pc) \cap \ZZ^n\}) \subset K[\Pc].
\end{eqnarray*}
In other words, $\mm$ is the maximal ideal of $K[\Pc]$ and $\omega_{K[\Pc]}$ is the canonical module of $K[\Pc]$.  Let $a(K[\Pc])$ denote the $a$-invariant of $K[\Pc]$.  One has $a(K[\Pc]) = - s$, where    
\[
s := \min\{N \geq 1 : N(\Pc \setminus \partial \Pc) \cap \ZZ^n\} \neq \emptyset\}.
\]
We say that $K[\Pc]$ is {\em level} \cite{Stanley} if
\[
\omega_{K[\Pc]} = (\{\xb^\ab t^s : \ab \in s(\Pc \setminus \partial \Pc) \cap \ZZ^n\})
\]
and that $K[\Pc]$ is pseudo-Gorenstein \cite{pseudo} if $|s(\Pc \setminus \partial \Pc) \cap \ZZ^n|=1$.  Recall that $K[\Pc]$ is Gorenstein if $\omega_{K[\Pc]}$ is generated by exactly one element.  In other words, $K[\Pc]$ is Gorenstein if and only if $K[\Pc]$ is level and pseudo-Gorenstein.  On the other hand, $K[\Pc]$ is {\em nearly Gorenstein} \cite{nearlyGor} if
\[
\mm \subset \omega_{K[\Pc]} \cdot \omega_{K[\Pc]}^{-1}, 
\]
where
\[
\omega_{K[\Pc]}^{-1} = \{ f \in Q(K[\Pc]): f \cdot \omega_{K[\Pc]} \subset K[\Pc] \},
\]
and where $Q(K[\Pc])$ is the quotient field of $K[\Pc]$. 
If $K[\Pc]$ is Gorenstein, then $\omega_{K[\Pc]} \cdot \omega_{K[\Pc]}^{-1} = K[\Pc]$ and $K[\Pc]$ is nearly Gorenstein. It follows from \cite{Miyashita} that if $K[\Pc]$ is pseudo-Gorenstein and nearly Gorenstein, then $K[\Pc]$ is Gorenstein.

Let $S=K[x_1, \ldots,x_n]$ denote the polynomial ring  in $n$ variables over a field $K$ with $n \geq 3$.  If $u \in S$ is a monomial, then $M_{\leq u}$ stands for the set of those monomials $w \in S$ which divide $u$.  In particular, $1 \in M_{\leq u}$ and $u \in M_{\leq u}$.  Let $G$ be a finite graph on the vertex set $V(G)=\{x_1, \ldots, x_n\}$, where $n \geq 3$, with no loop, no multiple edge and no isolated vertex, and $E(G)$ the set of edges of $G$.  The edge ideal of $G$ is the ideal $I(G) \subset S$ which is generated by those $x_ix_j$ with $\{x_i, x_j\} \in E(G)$. 

Let $\ZZ_{>0}$ denote the set of positive integers. Given $\mathfrak{c}=(c_1,\ldots,c_n)\in (\ZZ_{>0})^n$ and an integer $q\geq 1$, we denote by $(I(G)^q)_\mathfrak{c}$ the ideal of $S$ generated by those monomials $x_1^{a_1}\cdots x_n^{a_n} \in I(G)^q$ with each $a_i \leq c_i$.  Let $\delta_{\mathfrak{c}}(I(G))$ denote the biggest integer $q$ for which $(I(G)^q)_\mathfrak{c} \neq (0)$.  Then $(I(G)^{\delta_{\mathfrak{c}}(I(G))})_\mathfrak{c}$ is a polymatroidal ideal \cite[Theorem 4.3]{HSF_1}.  

Let $\Bc(G,\mathfrak{c})$ denote the minimal set of monomial generators of $(I(G)^{\delta_{\mathfrak{c}}(I(G))})_\mathfrak{c}$.  Set $\Mc(G,\mathfrak{c}):= \bigcup_{u \in \Bc(G,\mathfrak{c})}M_{ \leq u}$ and 
\[
\Dc(G,\mathfrak{c}):=\{(a_1, \ldots, a_n) \in \ZZ^n : x_1^{a_1}\cdots x_n^{a_n} \in \Mc(G,\mathfrak{c})\}.
\]
The standard unit coordinate vectors $\eb_1, \ldots, \eb_n$ of $\RR^n$ together with the origin $(0,\ldots,0)$ of $\RR^n$ belong to $\Dc(G,\mathfrak{c})$.  Now, since $(I(G)^{\delta_{\mathfrak{c}}(I(G))})_\mathfrak{c}$ is a polymatroidal ideal, it follows from \cite[Theorem 2.3]{HH_discrete} that $\Dc(G,\mathfrak{c})$ is a discrete polymatroid \cite[Definition 2.1]{HH_discrete}.  Let $\conv(\Dc(G,\mathfrak{c})) $ denote the convex hull of $\Dc(G,\mathfrak{c})$ in $\RR^n$.  It then follows from \cite[Theorem 3.4]{HH_discrete} that $\conv(\Dc(G,\mathfrak{c}))$ is a polymatroid \cite[Definition 1.1]{HH_discrete}.  In particular, $\conv(\Dc(G,\mathfrak{c}))$ is normal \cite[Theorem 6.1]{HH_discrete}. 

Let $2^{[n]}$ denote the set of subsets of $[n]:=\{1, \ldots, n\}$.  The {\em ground set rank function} \cite[p.~243]{HH_discrete} $\rho_{(G,\mathfrak{c})} : 2^{[n]} \to \ZZ_{>0}$ of $\conv(\Dc(G,\mathfrak{c}))$ is defined by setting
\[
\rho_{(G,\mathfrak{c})}(X) = \max\left\{\sum_{i \in X}a_i : x_1^{a_1}\cdots x_n^{a_n} \in \Bc(G,\mathfrak{c})\right\}
\]
for $\emptyset \neq X \subset [n]$ together with $\rho_{(G,\mathfrak{c})}(\emptyset)=0$. A nonempty subset $A\subset [n]$ is called {\em $\rho_{(G,\mathfrak{c})}$-closed} if for any subset $B\subset [n]$ with $A \subsetneq B$, one has $\rho_{(G,\mathfrak{c})}(A) < \rho_{(G,\mathfrak{c})}(B)$. A nonempty subset $A\subset [n]$ is called {\em $\rho_{(G,\mathfrak{c})}$-separable} if there exist nonempty subsets $A'$ and $A''$ of $[n]$ with $A = A' \cup A''$ and $A' \cap A'' = \emptyset$ for which $\rho_{(G,\mathfrak{c})}(A) = \rho_{(G,\mathfrak{c})}(A') + \rho_{(G,\mathfrak{c})}(A'')$. We say that a subset $A\subset [n]$ is {\em $\rho_{(G,\mathfrak{c})}$-inseparable} if it is not $\rho_{(G,\mathfrak{c})}$-separable.  Given $\emptyset \neq A \subset [n]$, we define the hyperplane $\Hc_A \subset \RR^n$ by
\[
\Hc_A = \{ (x_1,\ldots,x_n) \in \RR^n : \sum_{i\in A} x_i = \rho(A) \}.
\]  
Given $i\in [n]$, we define the hyperplane $\Hc^{(i)}  \subset \RR^n$ by
\[
\Hc^{(i)} = \{ (x_1,\ldots,x_n) \in \RR^n : x_i = 0 \}.
\] 
Edmonds \cite{Edmonds} says that
 
\begin{Lemma} \label{facets}
The facets of $\conv(\Dc(G,\mathfrak{c})) \subset \RR^n$ are all $\Hc^{(i)} \cap \conv(\Dc(G,\mathfrak{c}))$ with $i \in [n]$ and all $\Hc_A \cap \conv(\Dc(G,\mathfrak{c}))$, where $A$ ranges over all $\rho_{(G,\mathfrak{c})}$-closed and $\rho_{(G,\mathfrak{c})}$-inseparable subsets of $[n]$. 
\end{Lemma}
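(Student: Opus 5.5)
This is Edmonds' classical facet description of a polymatroid, specialised to $\conv(\Dc(G,\mathfrak{c}))$. The plan is first to establish the inequality description
\[
\conv(\Dc(G,\mathfrak{c})) = \Bigl\{ \xb \in \RR^n : x_i \geq 0 \ (i\in[n]),\ \textstyle\sum_{i\in A} x_i \leq \rho_{(G,\mathfrak{c})}(A) \ (\emptyset\neq A\subset[n]) \Bigr\},
\]
and then to discard the redundant inequalities. For the description I would invoke \cite[Theorem 3.4]{HH_discrete}. By that result, $\conv(\Dc(G,\mathfrak{c}))$ is the polymatroid whose ground set rank function is $\rho_{(G,\mathfrak{c})}$. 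The value $\rho_{(G,\mathfrak{c})}(X)$ is the maximum of $\sum_{i\in X}a_i$ over the bases $\Bc(G,\mathfrak{c})$. It equals the maximum over all of $\Dc(G,\mathfrak{c})$, because every element lies below some basis. Moreover, $\rho_{(G,\mathfrak{c})}$ is submodular, since it is the rank function of a discrete polymatroid. Since $\eb_1,\ldots,\eb_n$ and $0$ lie in $\Dc(G,\mathfrak{c})$, the polytope is full-dimensional, so facets correspond bijectively to irredundant inequalities.

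Next I would show that each $x_i\ge 0$ defines a facet. The points $0$ and $\eb_j$ for $j\neq i$ lie on $\Hc^{(i)}$ and are affinely independent, giving $n$ affinely independent points. Then I would show that $\Hc_A$ yields a facet only when $A$ is $\rho$-closed and $\rho$-inseparable. Suppose first that $A$ is separable as $A'\sqcup A''$. Then the inequality for $A$ is the sum of those for $A'$ and $A''$, so the face $\Hc_A\cap\conv(\Dc(G,\mathfrak{c}))$ lies in $\Hc_{A'}\cap\Hc_{A''}$ and is not a facet. Suppose instead that $A$ is not closed, say $\rho(B)=\rho(A)$ with $B\supsetneq A$. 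On the face, $\sum_{i\in A}x_i=\rho(A)=\rho(B)\ge\sum_{i\in B}x_i$, which together with $x_j\ge0$ forces $x_j=0$ for $j\in B\setminus A$. Hence the face lies in $\Hc_A\cap\Hc^{(j)}$, and it is not a facet unless it coincides with $\Hc^{(j)}\cap\conv(\Dc(G,\mathfrak{c}))$. That is impossible, because $0$ lies on the latter and not on $\Hc_A$, as $\rho(A)>0$.

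Finally, for $A$ closed and inseparable I must produce $n$ affinely independent points of $\Dc(G,\mathfrak{c})$ on $\Hc_A$. This is the main obstacle. I would use the greedy algorithm: for any linear order that lists $A$ first, the vector $x_{\sigma(k)}=\rho(\{\sigma(1),\ldots,\sigma(k)\})-\rho(\{\sigma(1),\ldots,\sigma(k-1)\})$ is a vertex of the polymatroid lying on $\Hc_A$. Varying the order inside $A$ produces $|A|-1$ independent directions within $\Hc_A$; inseparability is exactly what is needed to rule out a nontrivial splitting $\rho(A)=\rho(A')+\rho(A\setminus A')$. For the coordinates outside $A$, closedness gives $\rho(A\cup\{j\})>\rho(A)$, so placing $j$ immediately after $A$ raises $x_j$. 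Alternatively, one can argue dually: if the face were contained in another facet, then, using the uncrossing property from submodularity (tight sets are closed under union and intersection), one could derive a separation of $A$ or a larger set of equal rank. I would try the uncrossing argument first, since it is the standard route in Edmonds' proof, and cite \cite{Edmonds} for the remaining details.
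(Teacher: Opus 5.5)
The paper gives no proof of its own here (it simply quotes the lemma from \cite{Edmonds}), and your proposal amounts to the same thing: your checks of the coordinate facets and of why separable or non-closed $A$ give faces of dimension at most $n-2$ are correct, and the one step you only assert is exactly what the citation supplies, namely that for closed inseparable $A$ the greedy vertices obtained by permuting $A$ span an affine space of dimension $|A|-1$. If you want that step self-contained, use two facts: an adjacent swap of $i,k\in A$ after a prefix $S$ moves the greedy vertex by $\Delta\,(\eb_i-\eb_k)$ with $\Delta=\rho(S\cup\{i\})+\rho(S\cup\{k\})-\rho(S)-\rho(S\cup\{i,k\})\geq 0$; and if the graph on $A$ joining $i,k$ whenever some such $\Delta$ is positive were disconnected, any component $C$ would satisfy $\rho(A)=\rho(C)+\rho(A\setminus C)$, contradicting inseparability.
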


 Let $G$ be a finite graph on $V(G) = \{x_1, \ldots, x_n\}$.  A lattice polytope $\Pc \subset \RR^n$ of dimension $n$ is called a {\em $G$-polytope} if there is $\cc \in (\ZZ_{>0})^n$ with $\Pc = \conv(\Dc(G,\mathfrak{c}))$.

In general, there does exist a nearly Gorenstein ring which is not level \cite{level}.  However, it can be conjectured that every nearly Gorenstein normal homogeneous toric ring is level.  In particular, 

\begin{Conjecture}
\label{conjecture}
Every nearly Gorenstein $G$-polytope is level.
\end{Conjecture}

 In the present paper, it is shown that Conjecture \ref{conjecture} is true for complete graphs (Corollary \ref{NearLevel}) and for complete bipartite graphs (Corollary \ref{final}).

\section{Complete graphs}
Let $n \geq 3$, and let $K_n$ denote the complete graph on $V(G) = \{x_1, \ldots, x_n\}$.  Nearly Gorenstein $K_n$-polytopes as well as level $K_n$-polytopes are classified. 

Given integers $k_1, \ldots, k_n$ with each $k_i \geq 1$, the cube in $\RR^n$ defined by the system of linear inequalities $0 \leq x_i \leq k_i, 1 \leq i \leq n$, is denoted by $\Qc_{k_1,\ldots, k_n}$.

\begin{Lemma}
\label{cube}
Let $n \geq 3$.  The cube $\Qc_{k_1,\ldots, k_n} \subset \RR^n$ is a $K_n$-polytope is and only if $k_1+\cdots +k_n$ is even and $k_i\leq \sum_{1 \leq j \leq n, \, j \neq i} k_j$ for each $1 \leq i \leq n$.
\end{Lemma}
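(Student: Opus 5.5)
The plan is to characterize exactly when the box $\Qc_{k_1,\ldots,k_n}$ equals $\conv(\Dc(K_n,\cc))$ for some $\cc$. I will take $\cc=(k_1,\ldots,k_n)$ as the only natural candidate. The argument splits into a combinatorial fact about multigraphs on $K_n$ and a short polytope comparison in each direction.

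\textbf{Key combinatorial fact.} For the complete graph, a monomial $x_1^{a_1}\cdots x_n^{a_n}$ with $\sum a_i = 2q$ lies in $I(K_n)^q$, with exactly that degree, if and only if $a_i \le \sum_{j\ne i} a_j$ for every $i$. This is the classical criterion for a degree sequence to be realized by a loopless multigraph. The quickest proof is induction: repeatedly pair a variable of maximal exponent with one of second-largest exponent.

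\textbf{Sufficiency.} Assume $\sum k_i$ is even and $k_i\le\sum_{j\ne i}k_j$ for all $i$. By the fact, $\xb^{\cc}\in I(K_n)^{q}$ with $q=\sum k_i/2$. Any element of $(I(K_n)^{q'})_\cc$ has degree at least $2q'$ and at most $\sum k_i$, so $\delta_\cc(I(K_n))=q$. The unique bounded monomial of degree $2q$ is $\xb^\cc$ itself, hence $\Bc(K_n,\cc)=\{\xb^\cc\}$. Then $\Mc(K_n,\cc)$ is the set of divisors of $\xb^\cc$, so $\Dc(K_n,\cc)$ is the set of lattice points of the box, and its convex hull is $\Qc_{k_1,\ldots,k_n}$.

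\textbf{Necessity.} Suppose $\Qc_{k_1,\ldots,k_n}=\conv(\Dc(K_n,\cc))$ for some $\cc$. The vertex $(k_1,\ldots,k_n)$ of the box must lie in $\Dc(K_n,\cc)$, so $\xb^{\kappa}$ divides some $u\in\Bc(K_n,\cc)$, where $\kappa=(k_1,\ldots,k_n)$. Every point of $\Dc(K_n,\cc)$ lies in the box, so every exponent vector of $u$ is at most $\kappa$ coordinatewise. Hence $u=\xb^{\kappa}$. Since $u$ is a minimal generator of $(I(K_n)^{\delta})_\cc$, it is a product of $\delta$ edges. Therefore $\sum k_i=2\delta$ is even, and the degree-sequence inequality holds by the easy direction of the fact: each copy of $x_i$ is paired with a copy of a different variable.

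\textbf{Main obstacle.} The step needing care is the justification that the generator $u$ is a product of exactly $\delta_\cc$ edges with nothing extra. A minimal generator of $(I^q)_\cc$ could a priori be a product of $q$ edges times extra variables; but if extra variables appeared, dividing one off would give a smaller bounded element of $I^q$, contradicting minimality. The remaining point is the realizability induction in the fact, which I will state as a sub-lemma.
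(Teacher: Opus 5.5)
Your proof is correct, and it reaches sufficiency by a different route than the paper. The necessity half matches the paper's: factoring $x_1^{k_1}\cdots x_n^{k_n}$ into $\delta$ edges gives the parity and the inequalities. Your vertex argument also justifies the paper's unproved opening reduction, namely that the box is a $K_n$-polytope only if $\Bc(K_n,\cc)=\{x_1^{k_1}\cdots x_n^{k_n}\}$ for some $\cc$.

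For sufficiency, you construct a factorization of $\xb^{\cc}$ into $\frac12\sum k_i$ edges by the classical greedy induction on multigraph degree sequences. After that, $\delta_{\cc}(I(K_n))=\frac12\sum k_i$ and $\Bc(K_n,\cc)=\{\xb^{\cc}\}$ follow immediately from a degree count.

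The paper never builds a factorization. It takes an arbitrary $u\in\Bc(K_n,\cc)$ and argues extremally:
\begin{itemize}
\item if two exponents are deficient, $u$ can be multiplied by the edge joining them;
\item if a single exponent $a_i$ is deficient (by parity, by at least $2$), then either every edge factor of $u$ contains $x_i$, contradicting $k_i\le\sum_{j\ne i}k_j$, or some factor $x_px_q$ avoids $x_i$ and can be replaced by $(x_ix_p)(x_ix_q)$.
\end{itemize}
Each case contradicts the maximality of $\delta$.

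Your version is more modular and makes the role of the degree condition transparent. It costs you the sub-lemma, whose induction step needs a small parity check when the largest exponent is $1$. The paper's argument is self-contained, and it reuses the same augmentation trick in Lemma~\ref{cube3} to compute $\delta_{\cc}(I(K_n))$ when $\Bc(K_n,\cc)$ is not a singleton.

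One wording fix: the degree bounds $2q'$ and $\sum k_i$ apply to the monomials of $I(K_n)^{q'}$ dividing $\xb^{\cc}$, i.e.\ the generators of $(I(K_n)^{q'})_{\cc}$. They do not apply to arbitrary elements of that ideal.
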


\begin{proof}
The cube $\Qc_{k_1,\ldots, k_n} \subset \RR^n$ is a $K_n$-polytope if and only if there is $\cc \in (\ZZ_{>0})^n$ for which $\Bc(K_n,\cc) = \{x_1^{k_1}\cdots x_n^{k_n}\}$.  Supposing that $\Bc(K_n,\cc) = \{x_1^{k_1}\cdots x_n^{k_n}\}$, then the monomial $x_1^{k_1}\cdots x_n^{k_n}$ can be written as the product of certain number of quadratic squarefree monomials of the form $x_ix_j$, $1\leq i<j\leq n$. Hence, $k_1+\cdots +k_n$ is even and $k_i\leq \sum_{1 \leq j \leq n, \, j \neq i} k_j$ for each $1 \leq i \leq n$.  Conversely, suppose that $k_1+\cdots +k_n$ is even and $k_i\leq \sum_{1 \leq j \leq n, \, j \neq i} k_j$ for each $1 \leq i \leq n$. Set $\cc:=(k_1, \ldots, k_n)$ and $\delta:=\delta_{\cc}(I(K_n))$. Choose a monomial $u=x_1^{a_1}\cdots x_n^{a_n}\in\Bc(K_n,\cc)$. We show that $u=x_1^{k_1}\cdots x_n^{k_n}$. If $a_i=k_i$, for each $1\leq i\leq n$, then we are done. So, let $a_i< k_i$, for some integer $i$ with $1\leq i\leq n$. If there is an integer $j\neq i$ with $1\leq j\leq n$ for which $a_j< k_j$, then $(x_ix_j)u\in (I(G)^{\delta+1})_{\cc}$, contradicting the definition of $\delta$. Thus $a_j=k_j$, for each $1\leq j\leq n$ with $j\neq i$. Since $\deg(u)$ and $k_1+\cdots +k_n$ are even integers, we deduce that $a_i\leq k_i-2$. It follows from $u\in\Bc(K_n,\cc)$ that $u=u_1u_2\cdots u_{\delta}$, where each $u_{\ell}$ is a quadratic squarefree monomial. If every $u_{\ell}$ is divisible by $x_i$, then$$k_i\geq a_i+2=\sum_{1 \leq j \leq n, \, j \neq i} a_j+2=\sum_{1 \leq j \leq n, \, j \neq i} k_j+2,$$contradicting our assumption. Hence, at least one of $u_{\ell}$'s, say, $u_1$ is not divisible by $x_i$. Let $u_1=x_px_q$ with $p,q\neq i$. Then$$x_i^2u=(x_ix_p)(x_ix_q)u_2\ldots u_{\delta}\in (I(G)^{\delta+1})_{\cc},$$contradicting the definition of $\delta$.
\end{proof}

\begin{Lemma}
    \label{cube2}
    Let $n \geq 3$. 
    \begin{itemize}
        \item[(1)] 
        The cube $\Qc_{k_1,\ldots, k_n} \subset \RR^n$ is Gorenstein if and only if either $k_1 = \cdots = k_n = 2$, or $k_1 = \cdots = k_n = 1$.
        \item[(2)] 
        The cube $\Qc_{k_1,\ldots, k_n} \subset \RR^n$ is nearly Gorenstein if and only if either each $k_i \leq 2$ or each $k_i \geq 2$. 
        
        \item[(3)]
        The cube $\Qc_{k_1,\ldots, k_n} \subset \RR^n$ is level for all $k_1 \geq 1, \ldots, k_n\geq 1$.
    \end{itemize}
\end{Lemma}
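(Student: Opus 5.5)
The plan is to use that the cube is a product of segments. Its lattice points and those of its dilates split coordinatewise, so everything reduces to inequalities on each coordinate separately. By Lemma~\ref{facets}, or directly, the facets of $\Qc_{k_1,\ldots,k_n}$ are $x_i=0$ and $x_i=k_i$. Hence $\omega_{K[\Pc]}$ is spanned by the $\xb^\ab t^N$ with $1\le a_i\le Nk_i-1$ for all $i$. For the anticanonical module I will use the standard description for normal polytopes: $\omega^{-1}_{K[\Pc]}$ is spanned by the Laurent monomials $\xb^{\cb}t^N$ with $N\in\ZZ$ and every facet functional $\ge -1$, i.e. $-1\le c_i\le Nk_i+1$ (here $N$ may be $\le 0$). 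First compute $s$: if all $k_i\ge 2$, then $(1,\ldots,1)$ is interior to $\Pc$ and $s=1$; if some $k_i=1$, then $s=2$.

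For (1), since Gorenstein means level and pseudo-Gorenstein, and (3) gives levelness, it suffices to count interior points in degree $s$. If $s=1$, the count is $\prod (k_i-1)$, which equals $1$ iff all $k_i=2$. If $s=2$, the count is $\prod(2k_i-1)$, which equals $1$ iff all $k_i=1$. For (3), take an interior point $\ab$ of $N\Pc$ with $N\ge s$. I will choose, coordinatewise, $b_i$ with $\max(1,a_i-(N-s)k_i)\le b_i\le \min(sk_i-1,a_i)$. This interval is nonempty because $a_i\ge1$, $sk_i\ge 2$ and $a_i\le Nk_i-1$. Then $\bb$ is interior to $s\Pc$ and $\ab-\bb\in (N-s)\Pc\cap\ZZ^n$. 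Normality (or just the product structure) then writes $\xb^\ab t^N$ as $\xb^\bb t^s$ times a monomial of $K[\Pc]$, so $\omega$ is generated in degree $s$.

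For (2), nearly Gorenstein means every generator $\xb^\ab t$ with $\ab\in\Pc$ lies in $\omega\cdot\omega^{-1}$. This means $\ab=\bb+\cb$, with $\bb$ from $\omega$ in degree $N$ and $\cb$ from $\omega^{-1}$ in degree $1-N$, where $N\ge s$.
\begin{itemize}
\item If all $k_i\ge2$, take $N=1$: $\cb$ has $c_i\in\{-1,0,1\}$, and I choose $1\le b_i\le k_i-1$ with $|a_i-b_i|\le1$.
\item If all $k_i\le2$ and some $k_i=1$, take $N=2$: then $-1\le c_i\le 1-k_i$. For $k_i=1$ use $b_i=1$ and $c_i\in\{-1,0\}$; for $k_i=2$ use $c_i=-1$ and $b_i=a_i+1\in[1,3]$.
\item Conversely, in the mixed case, some $k_i=1$ and some $k_j\ge3$, so $s=2$ and $N\ge2$. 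The constraint $-1\le c_i\le 1-(N-1)k_i$ forces $N\le3$ from $k_i=1$. It also forces $(N-1)k_j\le 2$ from $k_j\ge3$, which fails for both $N=2$ and $N=3$. So no degree-one element lies in the trace, and the ring is not nearly Gorenstein.
\end{itemize}

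The main point requiring care is the description of $\omega^{-1}$ as the monomials with all facet functionals $\ge -1$, including negative degrees. I would justify it from the facet description together with normality: a monomial $f$ satisfies $f\omega\subset K[\Pc]$ iff adding the interior cone lattice points keeps the point in the cone. Since the interior points of the cone are exactly those with all facet values $\ge1$, and the values $1$ are realized simultaneously (here even coordinatewise), this gives the stated bounds. Everything else is elementary bookkeeping on intervals.
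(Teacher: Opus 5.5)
Your proof is correct. The ``if'' half of (2) is the same as the paper's: your factorizations in degrees $N=1$ and $N=2$ are exactly the paper's choices $\xb_{A_1}/\xb_{A_2}\in\omega_{K[\Pc]}^{-1}$ and $1/(\xb_A t)\in\omega_{K[\Pc]}^{-1}$. The rest takes a somewhat different route.

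\textbf{Part (1).} The paper only cites \cite[Theorem 4.5]{HSF_5}. You instead derive it from (3) by counting interior lattice points of $s\Pc$, which makes the argument self-contained.

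\textbf{Part (3).} You supply the coordinatewise splitting that the paper simply calls clear.

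\textbf{The ``only if'' half of (2).} The paper never describes $\omega_{K[\Pc]}^{-1}$. It takes a hypothetical factor $\xb^{\bb'}t^{\ell'}$ and multiplies it by the specific elements $x_1\cdots x_nt^2$, $x_1\cdots x_{n-1}x_n^2t^2$ and $x_1\cdots x_{n-1}x_n^{2k_n-1}t^2$ of $\omega_{K[\Pc]}$. This excludes $\ell'=-2$ and $\ell'=-1$, and then $s=2$ finishes the argument. You instead use the facet description of $\omega_{K[\Pc]}^{-1}$ (all facet functionals $\ge -1$). From it one reads off at once that $\omega_{K[\Pc]}^{-1}$ has no element of negative degree once some $k_j\ge 3$, while $\omega_{K[\Pc]}$ begins in degree $2$. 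The underlying mechanism is the same, since the paper's test elements serve as witnesses for particular facet inequalities. Your version is uniform and makes the obstruction transparent. Its cost is that it needs the description of $\omega_{K[\Pc]}^{-1}$, which you justify adequately from normality and the $\ZZ^{n+1}$-grading.

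One wording fix in that justification. What you need is, for each facet separately, an interior lattice point on which that facet's functional equals $1$. For example, in degree $2$ take $x_i=1$ or $x_i=2k_i-1$, with all other coordinates equal to $1$. A single interior point with value $1$ on every facet exists only in the Gorenstein cases, so ``simultaneously'' should be dropped.
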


\begin{proof}
\noindent
{\bf (Gorenstein (1))} See \cite[Theorem 4.5]{HSF_5}.  

\medskip
\noindent
{\bf (nearly Gorenstein (2))} Set $\Qc:=\Qc_{k_1,\ldots, k_n}$ and ${\bf x}_A:=\prod_{i\in A}x_i$ for each $A\subset [n]$.   

Suppose that each $k_i \leq 2$.
If either $k_1=\cdots =k_n=1$, or $k_1=\cdots =k_n=2$, then $\Qc$ is Gorenstein. So, one assumes that there is an integer $1\leq r\leq n-1$ for which $k_1=\cdots =k_r=1$ and $k_{r+1}=\cdots =k_n=2$. Suppose that $\ab=(a_1, \ldots, a_n)\in \Qc\cap \ZZ^{n}$.  We show that $\xb^\ab t\in \omega_{K[\Qc]}.\omega_{K[\Qc]}^{-1}$. Set $A:=\{i\in [n] : a_i=0\}\cup\{r+1, \ldots, n\}$.  Then $(\xb_{A}t)(\xb^\ab t)\in \omega_{K[\Qc]}$. Since $1/(\xb_{A}t)\in\omega_{K[\Qc]}^{-1}$, the assertion follows.

Assume that each $k_i \geq 2$. As above, suppose that $\ab=(a_1, \ldots, a_n)\in \Qc\cap \ZZ^{n}$.  We show that $\xb^\ab t\in \omega_{K[\Qc]}.\omega_{K[\Qc]}^{-1}$. Set $A_1:=\{i\in [n] : a_i=k_i\}$ and $A_2:=\{i\in [n] : a_i=0\}$. Then $A_1$ and $A_2$ are disjoint subsets of $[n]$.  One has $\xb_{A_2}/\xb_{A_1}\xb^\ab t\in \omega_{K[\Qc]}$. Since $\xb_{A_1}/\xb_{A_2}\in\omega_{K[\Qc]}^{-1}$, the assertion follows.

Finally, suppose that neither of the above situation holds. Without loss of generality, we may assume that  $k_1=1$ and $k_n\geq 3$. We show that $x_1^{k_1}\cdots x_n^{k_n}t\in K[\Qc]$ does not belong to $\omega_{K[\Qc]}.\omega_{K[\Qc]}^{-1}$. By contradiction, assume that there are $\xb^{\bb}t^{\ell}\in \omega_{K[\Qc]}$ and $\xb^{\bb'}t^{\ell'}\omega_{K[\Qc]}^{-1}$ with $$x_1^{k_1}\cdots x_{n}^{k_n}t=(\xb^{\bb}t^{\ell})(\xb^{\bb'}t^{\ell'})=\xb^{\bb+\bb'}t^{\ell+\ell'}.$$
One has $\bb+\bb'=(k_1, \ldots, k_n)$ and $\ell+\ell'=1$. Since $x_1\cdots x_nt^2\in \omega_{K[\Qc]}$ and since $\xb^{\bb'}t^{\ell'}\in \omega_{K[\Qc]}^{-1}$, it follows that
\begin{eqnarray}
\label{hhhhh}   
(x_1\cdots x_nt^2)(\xb^{\bb'}t^{\ell'})\in K[\Qc].
\end{eqnarray}
In particular, $\ell'\geq -2$. Let $\ell'=-2$. It follows from (\ref{hhhhh}) that $\xb^{\bb'}=x_1^{-1}\cdots x_n^{-1}$.  Since $x_1\cdots x_{n-1}x_n^2t^2\in\omega_{K[\Qc]}$, we easily see that
\begin{align*}
(x_1\cdots x_{n-1}x_n^2t^2)(\xb^{\bb'}t^{\ell'})=(x_1\cdots x_{n-1}x_n^2t^2)(x_1^{-1}\cdots x_n^{-1}t^{-2})=x_n  
\end{align*}
belongs to $K[\Qc]$, a contradiction.  Hence, $\ell'\geq -1$. Set $\bb'=(b_1', \ldots, b_n')$. It follows from (\ref{hhhhh}) that $b_i'\geq -1$ for each $i\in [n]$. Since $x_1\cdots x_{n-1}x_n^{2k_n-1}t^2\in\omega_{K[\Qc]}$, one has $$(x_1\cdots x_{n-1}x_n^{2k_n-1}t^2)(\xb^{\bb'}t^{\ell'})\in K[\Qc].$$As the exponent of $x_n$ in the above product is at least $2k_n-2>k_n$, we deduce that $\ell'\neq -1$. Thus $\ell'\geq 0$. On the other hand, Since $(\Qc \setminus \partial \Qc) \cap \ZZ^n = \emptyset$, one has $\ell\geq 2$, contradicting $\ell+\ell'=1$.

\medskip
\noindent
{\bf (level (3))}  Clearly, every cube is level.
\hspace{6.5cm}
\end{proof}

In the rest of this section, we identify any edge of $K_n$ with the corresponding quadratic squarefree monomial. For example, the edge $\{1,2\}\in E(K_n)$ will be identified with the monomial $x_1x_2$.

\begin{Lemma}
\label{cube3}
Let $n \geq 4$.  If $|\Bc(K_n,\mathfrak{c})|\geq 2$, then $\conv(\Dc(K_n,\mathfrak{c}))$ can be neither nearly Gorenstein nor level.   
\end{Lemma}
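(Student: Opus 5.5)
The plan is to first describe $\Bc(K_n,\cc)$ explicitly when it has at least two elements, then write down the facets of $\Pc:=\conv(\Dc(K_n,\cc))$ via Lemma \ref{facets}, and finally exhibit explicit lattice points that witness the failure of levelness and of near Gorensteinness.

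\emph{Step 1: the shape of $\Bc(K_n,\cc)$.} I expect to show, by the exchange argument in the proof of Lemma \ref{cube}, that $|\Bc(K_n,\cc)|\geq 2$ forces the following.
\begin{itemize}
\item No coordinate is dominant: $c_i<\sum_{j\neq i}c_j$ for every $i$. Otherwise the unique generator is $x_i^{\sum_{j\neq i}c_j}\prod_{j\neq i}x_j^{c_j}$.
\item The sum $c_1+\cdots+c_n$ is odd, so that $2\delta_{\cc}(I(K_n))=\sum c_i-1$. Otherwise Lemma \ref{cube} gives the single generator $\xb^{\cc}$.
\item $\Bc(K_n,\cc)=\{\xb^{\cc}/x_i : i\in I\}$, where $I$ is the set of indices for which $\xb^{\cc}/x_i$ is a product of edges, and $|I|\geq 2$.
\end{itemize}
Since $|I|\geq 2$, every coordinate $x_j$ attains $c_j$ on some generator. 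Hence $\rho(\{j\})=c_j$, $\rho([n])=\sum c_i-1$, and the facets of $\Pc$ given by Lemma \ref{facets} are:
\begin{itemize}
\item $x_j\geq 0$ for every $j$;
\item $x_j\leq c_j$ for every $j$;
\item $\sum_i x_i\leq \sum c_i-1$;
\item possibly some further facets $\sum_{i\in A}x_i\leq\rho(A)$ for $\rho$-closed inseparable $A$. These come from indices $i\notin I$, where the edge-decomposition constraint $c_k\leq(\text{sum of the others})$ becomes tight.
\end{itemize}
I will first treat the model case $I=[n]$, where only the box facets and the single facet $\sum x_i\leq\sum c_i-1$ occur, and then check that the extra facets do not affect the witnesses chosen below.

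\emph{Step 2: not level.} The interior points at level $N$ are those $\ab$ with $0<a_i<Nc_i$ for all $i$ and $\sum a_i< N(\sum c_i-1)$. The minimal level is $s=2$, except in degenerate cases; $(1,\ldots,1)$ is interior at level $2$ because $n<2(\sum c_i-1)$. The model computation is $\cc=(1,\ldots,1)$ with $n$ odd, so $n\geq 5$.
\begin{itemize}
\item At level $2$ the only interior point is $(1,\ldots,1)$.
\item The point $(2,\ldots,2)$ is interior at level $3$, because $2n<3(n-1)$ for $n\geq 4$.
\item Its only possible splitting is $(2,\ldots,2)=(1,\ldots,1)+(1,\ldots,1)$, and $(1,\ldots,1)\notin\Pc$ because $n>n-1$.
\end{itemize}
So this generator of $\omega$ lies in degree $3>s$. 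In general I will take the witness point at level $s+1$ whose coordinates are as large as possible, namely $a_i=(s+1)c_i-1$ trimmed so that the total sum is $(s+1)(\sum c_i-1)-1$. Subtracting any level-$s$ interior point $\bb$ (which satisfies $b_i\leq sc_i-1$) leaves a vector that violates either $x_i\leq c_i$ or the sum facet. This is exactly where $n\geq 4$ is used: it makes the coordinate budget big enough. The case $n=3$ is genuinely excluded.

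\emph{Step 3: not nearly Gorenstein.} I will imitate part (2) of Lemma \ref{cube2}. I take the vertex $\xb^{\ab}t$ with $\ab=\cc-\eb_i$ for some $i\in I$ and assume $\xb^{\ab}t=(\xb^{\bb}t^{\ell})(\xb^{\bb'}t^{\ell'})$ with $\xb^{\bb}t^{\ell}\in\omega$ and $\xb^{\bb'}t^{\ell'}\in\omega^{-1}$.
\begin{itemize}
\item Multiplying $\xb^{\bb'}t^{\ell'}$ by the level-$2$ interior point $(1,\ldots,1)$, and by interior points at level $2$ or $3$ that are pushed toward one face, should give $\ell'\geq-1$, and then $\ell'\neq -1$, by forcing a coordinate or the total sum to exceed its bound. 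This mirrors the use of $x_1\cdots x_{n-1}x_n^{2k_n-1}t^2$ in Lemma \ref{cube2}.
\item Then $\ell'\geq 0$, so $\ell\leq 1$, which is impossible because $\Pc$ has no interior lattice points.
\end{itemize}

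The main obstacle is Step 1 in the case $I\neq[n]$. There the extra facets must be identified precisely enough to confirm that the chosen witness points are still interior, and that $(1,\ldots,1)$, or its replacement, is still the relevant level-$s$ interior point. I would handle this by showing that each extra facet has the form $\sum_{j\neq k}x_j\leq\rho$ with slack at least $1$ at the witnesses.
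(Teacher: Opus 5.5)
\textbf{Gap in Step 3: the case $(\ell,\ell')=(1,0)$ is not handled.} Your route matches the paper's. You pin down the facets (box facets plus the single facet $\sum_i x_i\le\sum_i c_i-1$) and refute levelness with a maximal interior point. Your witness $((s+1)c_i-1)_i$ is the paper's $(3c_i-1)_i$ when $s=2$, and it works equally well when $s=1$. You refute near-Gorensteinness by showing $\xb^{\cc-\eb_n}t\notin\omega_{K[\Pc]}\,\omega_{K[\Pc]}^{-1}$ through a case analysis on $\ell'$.

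The problem is the last step of Step 3. You dismiss $\ell'\ge 0$ ``because $\Pc$ has no interior lattice points'', but that holds only when some $c_i=1$. If $\min_i c_i\ge 2$, then $(1,\ldots,1)$ is interior to $\Pc$; for example $\cc=(2,2,2,3)$ gives $s=1$. This is not a degenerate case, and $(\ell,\ell')=(1,0)$ then needs its own argument. The paper's Case~1 supplies it:
\begin{itemize}
\item $\xb^{\bb}t\in\omega_{K[\Pc]}$ forces $b_i\le c_i-1$, so $\bb'=\cc-\eb_n-\bb\ge(1,\ldots,1,0)$.
\item Thus $\xb^{\bb'}$ is a degree-zero element of $\omega_{K[\Pc]}^{-1}$ with coordinate sum at least $n-1$.
\item Multiply it by the interior point $((n-1)c_i-1)_i$ at level $n-1$ (or already by $(2c_i-1)_i$ at level $2$).
\item The product has coordinate sum exceeding the sum-facet bound $N(\sum_i c_i-1)$, a contradiction.
\end{itemize}

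\textbf{Two further remarks.} First, the case $\ell'=-1$ cannot be closed by pushing one coordinate as in Lemma~\ref{cube2}. That trick needs some $c_i\ge 3$. For $\cc=(2,2,2,1)$, the decomposition $\xb^{(2,2,2,0)}t=(\xb^{(3,3,3,1)}t^2)(\xb^{-(1,1,1,1)}t^{-1})$ passes every coordinate test and every level-$2$ test.
\begin{itemize}
\item What works is a sum estimate: $b_i\le 2c_i-1$ gives $\sum_i b'_i\ge n-1-\sum_i c_i$.
\item Pair this with $(3c_i-1)_i$ at level $3$, or with the paper's $((n-1)c_i-1)_i$ at level $n-1$. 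The resulting sum is at least $2\sum_i c_i-1>2(\sum_i c_i-1)$.
\item This is exactly where $n\ge 4$ is needed in Step 3. For $n=3$ the polytope $\Sc_{1,1,1}$ is Gorenstein, and this case genuinely survives.
\end{itemize}

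Second, the obstacle you single out in Step 1 does not exist. Your own two bullets say there is no dominant coordinate and the sum is odd. Together they imply that each $\cc-\eb_h$ has even sum and no dominant coordinate, so it is a product of edges by the criterion in Lemma~\ref{cube}. Hence $I=[n]$ always, $\rho(X)=\sum_{i\in X}c_i$ for every proper nonempty $X$, and there are no extra facets.
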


\begin{proof}
Let $|\Bc(K_n,\mathfrak{c})|\geq 2$.  Set $\delta:=\delta_{\cc}(I(K_n))$. If $c_i> (c_1+\cdots +c_n)-c_i$ for some $i\in [n]$, then replacing $c_i$ with $(c_1+\cdots +c_n)-c_i$ does not change $\Bc(K_n,\mathfrak{c})$. So, one can assume that each $c_i\leq (c_1+\cdots +c_n)-c_i$, for each $i\in [n]$. We claim $\delta=(c_1+\cdots +c_n-1)/2$. 

Clearly, $\delta\leq (c_1+\cdots +c_n)/2$. If $\delta =  (c_1+\cdots +c_n)/2$, then $\Bc(K_n,\mathfrak{c})=\{x_1^{c_1}\cdots x_n^{c_n}\}$ is a singleton, contradicting our assumption. Let $\delta\leq (c_1+\cdots +c_n-2)/2$.  We choose a monomial $u=x_1^{a_1}\cdots x_n^{a_n}\in \Bc(K_n,\mathfrak{c})$.  Since $u\in I(K_n)^{\delta}$, it can be written as $u=e_1\cdots e_{\delta}$, for (not necessarily distinct) edges $e_1, \ldots, e_{\delta}\in E(K_n)$. If there are $p, q\in [n]$, $p\neq  q$, with $a_p<c_p$ and $a_q< c_q$, then $(x_px_q)u\in (I(K_n)^{\delta+1})_{\cc}$, contradicting the definition of $\delta$. So, there is at most one $r\in [n]$ with $a_r <c_r$. Since$$\deg u=2\delta\leq (c_1+\cdots +c_n-2),$$we deduce that $a_r\leq c_r-2$. If in the presentation $u=e_1\cdots e_{\delta}$, there is an edge, say $e_1$, which is not incident to $x_r$, then $e_1=x_tx_{t'}$ for $t,t'\in [n]$, $t \neq t'$, with $r\notin\{t,t'\}$. Then$$x_r^2u=(x_rx_t)(x_rx_{t'})e_2\cdots e_{\delta}\in (I(K_n)^{\delta+1})_{\cc},$$contradicting the definition of $\delta$. Therefore, in the presentation $u=e_1\cdots e_{\delta}$, all edges $e_1, \ldots, e_{\delta}$ are incident to $x_r$. This implies that $a_r=\delta$. Hence,$$c_r\geq a_r+2=\delta+2=\sum_{\substack{i\in [n]\\ i\neq r}}a_i+2=\sum_{\substack{i\in [n]\\ i\neq r)}}c_i+2,$$contradicting $c_r\leq (c_1+\cdots +c_n)-c_r$. Thus $\delta=(c_1+\cdots +c_n-1)/2$, as desired.

Let $v\in \Bc(K_n,\mathfrak{c})$. As $v\in I(K_n)^{\delta}$, it can be written as $v=f_1\cdots f_{\delta}$, for (not necessarily distinct) edges $f_1, \ldots, f_{\delta}\in E(K_n)$. Since $\deg v=2\delta=c_1+\cdots +c_n-1$, there is $h\in [n]$ with $\deg_{x_h}v=c_h-1$ and $\deg_{x_j}v=c_j$ for each $j\in [n]\setminus \{h\}$. As above, one easily sees that at least one of the edges $f_1, \ldots, f_{\delta}$, say, $f_1$ is not incident to $x_h$. Therefore, $f_1=x_{h'}x_{h''}$ for distinct integers $h',h''\in [n]\setminus\{h\}$. Then$$x_hv/x_{h'}=(x_hx_{h''})f_2\cdots f_{\delta}\in  \Bc(K_n,\mathfrak{c}).$$This computation shows that $\rho_{(K_n,\mathfrak{c})}(X)=\sum_{i\in X}c_i$ for each nonempty proper subset $X$ of $[n]$.  Furthermore, $\rho_{(K_n,\mathfrak{c})}([n])=2\delta=c_1+\cdots +c_n-1$. As a consequence, a nonempty subset $X\subset [n]$ is $\rho_{(K_n,\mathfrak{c})}$-closed and $\rho_{(K_n,\mathfrak{c})}$-inseparable if and only if $X=[n]$ or $X$ is a singleton.  It then follows from Lemma \ref{facets} that $\conv(\Dc(K_n,\mathfrak{c}))$ is defined by the system of linear inequalities $0\leq x_i\leq c_i$, $1 \leq i \leq n$, together with $x_1+\cdots +x_n\leq c_1+\cdots +c_n-1$.

\medskip

\noindent
{\bf (First Step)}
We show that $\conv(\Dc(K_n,\mathfrak{c}))$ is not nearly Gorenstein.

Set $\Pc:=\conv(\Dc(K_n,\mathfrak{c}))$. Consider $x_1^{c_1}\cdots x_{n-1}^{c_{n-1}}x_n^{c_n-1}t\in K[\Pc]$. To show that $\Pc$ is not nearly Gorenstein, it is enough to prove that$$x_1^{c_1}\cdots x_{n-1}^{c_{n-1}}x_n^{c_n-1}t\notin \omega_{K[\Pc]}.\omega_{K[\Pc]}^{-1}.$$By contradiction, assume that there are $\xb^{\bb}t^{\ell}\in \omega_{K[\Pc]}$ and $\xb^{\bb'}t^{\ell'}\omega_{K[\Pc]}^{-1}$ for which $$x_1^{c_1}\cdots x_{n-1}^{c_{n-1}}x_n^{c_n-1}t=(\xb^{\bb}t^{\ell})(\xb^{\bb'}t^{\ell'})=\xb^{\bb+\bb'}t^{\ell+\ell'}.$$Therefore, $\bb+\bb'=(c_1, \ldots, c_{n-1}, c_n-1)$ and $\ell+\ell'=1$. Since $x_1\cdots x_nt^2\in \omega_{K[\Pc]}$ and $\xb^{\bb'}t^{\ell'}\in \omega_{K[\Pc]}^{-1}$, one has 
\begin{eqnarray}
\label{eeeeeE}   
(x_1\cdots x_nt^2)(\xb^{\bb'}t^{\ell'})\in K[\Pc].
\end{eqnarray}
In particular, $\ell'\geq -2$. Suppose that $\ell'=-2$. It then follows from (\ref{eeeeeE}) that $\xb^{\bb'}=x_1^{-1}\cdots x_n^{-1}$.  Since $n\geq 4$, one has $x_1^{3c_1-1}\cdots x_n^{3c_n-1}t^3\in\omega_{K[\Pc]}$.  However,
\begin{eqnarray*}
(x_1^{3c_1-1}\cdots x_n^{3c_n-1}t^3)(\xb^{\bb'}t^{\ell'})&=&(x_1^{3c_1-1}\cdots x_n^{3c_n-1}t^3)(x_1^{-1}\cdots x_n^{-1}t^{-2})\\ &=&x_1^{3c_1-2}\cdots x_n^{3c_n-2}t  
\end{eqnarray*}
does not belong to $K[\Pc]$, as$$(3c_1-2)+\cdots +(3c_n-2)=3(c_1+\cdots +c_n)-2n> c_1+\cdots +c_n-1.$$
This is a contradiction. Hence, $\ell'\geq -1$.

Set $\bb:=(b_1, \ldots, b_n)$ and $\bb':=(b_1', \ldots, b_n')$. As $\ell\geq 1$, the equality $\ell+\ell'=1$ implies that either $(\ell,\ell')=(1,0)$ or $(\ell,\ell')=(2,-1)$.

\medskip

\noindent
{\bf (Case 1)} Let $(\ell,\ell')=(1,0)$.  Since  $\xb^{\bb}t=\xb^{\bb}t^{\ell}\in \omega_{K[\Pc]}$, one has $b_i\leq c_i-1$, for each $i\in [n]$.  We deduce from $\bb+\bb'=(c_1, \ldots, c_{n-1}, c_n-1)$ that $b_i'\geq 1$ for every $i\in[n-1]$, and $b_n'\geq 0$. For each $i\in [n]$, define $b_i''=(n-1)c_i-1$ and set $\bb'':=(b_1'', \ldots, b_n'')$.  The defining inequalities of $\Pc$ says that $\bb''\in (n-1)(\Pc\setminus \partial\Pc)$. In other words, $\xb^{\bb''}t^{n-1}\in \omega_{K[\Pc]}$. Thus,$$\xb^{\bb'+\bb''}t^{n-1}=(\xb^{\bb'})(\xb^{\bb''}t^{n-1})=(\xb^{\bb'}t^{\ell'})(\xb^{\bb''}t^{n-1})\in K[\Pc].$$This yields that $\bb'+\bb''\in (n-1)\Pc$. However, this is impossible as
\begin{eqnarray*}
&&(b_1'+\cdots +b_n')+(b_1''+\cdots +b_n'') \\&\geq& (n-1)+(n-1)(c_1+\cdots +c_n)-n\\ &>& (n-1)(c_1+\cdots +c_n-1). 
\end{eqnarray*}

\medskip

\noindent
{\bf (Case 2)} Let $(\ell,\ell')=(2,-1)$.  Since $\xb^{\bb}t^2=\xb^{\bb}t^{\ell}\in \omega_{K[\Pc]}$, one has $\bb\in 2(\Pc\setminus\partial\Pc)$. Hence $b_i\leq 2c_i-1$ for each $i\in [n]$. This implies that$$b_1+\cdots +b_n\leq 2(c_1+\cdots +c_n)-n.$$We conclude from $\bb+\bb'=(c_1, \ldots, c_{n-1}, c_n-1)$ and the above inequality that $$b_1'+\cdots +b_n'\geq -(c_1+\cdots +c_n)+n-1.$$Let $\bb''$ be the same vector defined in Case 1. Since $\xb^{\bb''}t^{n-1}\in \omega_{K[\Pc]}$, one has  $$\xb^{\bb'+\bb''}t^{n-2}=(\xb^{\bb'}t^{-1})(\xb^{\bb''}t^{n-1})=(\xb^{\bb'}t^{\ell'})(\xb^{\bb''}t^{n-1})\in K[\Pc].$$This yields that $\bb'+\bb''\in (n-2)\Pc$. However, this is impossible as
\begin{eqnarray*}
&&(b_1'+\cdots +b_n')+(b_1''+\cdots +b_n'')\\ &\geq& -(c_1+\cdots +c_n)+(n-1)+(n-1)(c_1+\cdots +c_n)-n\\ &=&(n-2)(c_1+\cdots +c_n)-1>(n-2)(c_1+\cdots +c_n-1),
\end{eqnarray*}
where the last inequality follows from $n\geq 4$.

\medskip

\noindent
{\bf (Second Step)}
We show that $\conv(\Dc(K_n,\mathfrak{c}))$ is not level. 

Note that $(3c_1-1, \ldots, 3c_n-1)\in \ZZ_{>0}^n$ belongs to the interior of  $3\conv(\Dc(K_n,\mathfrak{c}))$. But it cannot be written as the sum of a point belonging to $\conv(\Dc(K_n,\mathfrak{c})) \cap \ZZ^n$ and a point belonging to $2(\conv(\Dc(K_n,\mathfrak{c})) \setminus \partial\conv(\Dc(K_n,\mathfrak{c}))) \cap \ZZ^n$.
\, \, \, \, \, \, \, \, \, \, \, \,
\end{proof}

\begin{Lemma} \label{n=3}
Let $\cc=(c_1, c_2, c_3)\in (\ZZ_{>0})^3$ and suppose that $|\Bc(K_3,\mathfrak{c})|\geq 2$. 
\begin{itemize}
\item[(1)] $\conv(\Dc(K_3,\mathfrak{c}))$ is Gorenstein if and only if $(c_1, c_2, c_3)=(1,1,1)$.
\item[(2)] $\conv(\Dc(K_3,\mathfrak{c}))$ is nearly Gorenstein if and only if $(c_1, c_2, c_3)=(1,1,1)$.
\item[(3)] $\conv(\Dc(K_3,\mathfrak{c}))$ is level if and only if $\min\{c_1, c_2, c_3\}=1$.
\end{itemize}
\end{Lemma}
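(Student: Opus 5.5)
As in the proof of Lemma \ref{cube3}, we first normalise $\cc$ and compute the polytope explicitly; then (1), (2) and (3) reduce to checks on lattice points of dilates. If $c_i>(c_1+c_2+c_3)-c_i$, replacing $c_i$ by $(c_1+c_2+c_3)-c_i$ does not change $\Bc(K_3,\cc)$, so we may assume $c_i\le C-c_i$ for all $i$, where $C:=c_1+c_2+c_3$. The argument of Lemma \ref{cube3} computing $\delta$ and $\rho_{(K_n,\cc)}$ does not use $n\ge4$. It gives $\delta=(C-1)/2$, so $C$ is odd, and $\rho(X)=\sum_{i\in X}c_i$ for proper nonempty $X$, while $\rho([3])=C-1$. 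By Lemma \ref{facets},
\[
\Pc:=\conv(\Dc(K_3,\cc))=\{\xb\in\RR^3: 0\le x_i\le c_i,\ x_1+x_2+x_3\le C-1\}.
\]
Hence $\ab\in N(\Pc\setminus\partial\Pc)\cap\ZZ^3$ if and only if $1\le a_i\le Nc_i-1$ for each $i$ and $a_1+a_2+a_3\le N(C-1)-1$.

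\textbf{Level (3).} If all $c_i\ge2$, then $C\ge 6$, and since $C$ is odd, $C\ge7$. Thus $(1,1,1)$ is interior to $\Pc$, so $s=1$. The point $(2c_1-1,2c_2-1,2c_3-1)$ lies in the interior of $2\Pc$, because its coordinate sum is $2C-3=2(C-1)-1$. It is not a sum of two interior lattice points of $\Pc$: such a sum has coordinate sum at most $2C-6$. So $\Pc$ is not level.

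If, say, $c_1=1$, there is no interior lattice point in $\Pc$. The point $(1,1,1)$ is interior to $2\Pc$, since $c_2,c_3\ge1$ and $C\ge3$ with $|\Bc|\ge2$ forcing $C\ge 5$ or $\cc=(1,1,1)$. Hence $s=2$. We then show that for $N\ge3$ every interior lattice point $\ab$ of $N\Pc$ is $\bb+\ab'$ with $\bb\in\Pc\cap\ZZ^3$ and $\ab'$ interior to $(N-1)\Pc$. We choose $\bb$ greedily: take $b_i=c_i$ on coordinates with $a_i$ large (close to $Nc_i-1$) and $b_i=0$ on coordinates with $a_i$ small, adjusting so that $\sum b_i\le C-1$ and $\sum a_i-\sum b_i\le (N-1)(C-1)-1$. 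Since $\sum\bb$ can be chosen as large as $C-1$ while respecting the box constraints, this is a short case check, and it gives levelness.

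\textbf{Nearly Gorenstein (2) and Gorenstein (1).} For $\cc=(1,1,1)$ we have $\Pc=\{0\le x_i\le1,\ \sum x_i\le2\}$, and the interior lattice points of $N\Pc$ are exactly $(1,1,1)+$ the lattice points of $(N-2)\Pc$. Hence $\omega$ is generated by $x_1x_2x_3t^2$, so $\Pc$ is Gorenstein, and in particular nearly Gorenstein.

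For $\cc\ne(1,1,1)$ we mimic the First Step of Lemma \ref{cube3}. We show that
\[
x_1^{c_1}x_2^{c_2}x_3^{c_3-1}t\notin\omega\cdot\omega^{-1}.
\]
Suppose $\xb^{\bb}t^{\ell}\cdot\xb^{\bb'}t^{\ell'}$ equals this element, with $\xb^{\bb}t^{\ell}\in\omega$ and $\xb^{\bb'}t^{\ell'}\in\omega^{-1}$, so $\ell+\ell'=1$ and $\ell\ge s$. We bound $\ell'$ from below by multiplying with the interior element $x_1x_2x_3t^2$. We then rule out the remaining possible values of $\ell'$ by multiplying $\xb^{\bb'}t^{\ell'}$ with suitable interior elements $\xb^{\bb''}t^{m}$ chosen near the facet $\sum x_i=m(C-1)-1$, and showing that the coordinate sum of $\bb'+\bb''$ exceeds $(m+\ell')(C-1)$.

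This is the main obstacle. The Case 2 estimate in Lemma \ref{cube3} used $n\ge4$, so for $n=3$ the test element must be chosen differently. My first attempt would be $\bb''=(mc_1-1,mc_2-1,mc_3-1)$ with $m=2$, or with $m$ large, with the boundary-tight coordinate sum $m(C-1)-1$ supplying the missing slack. The two regimes ($s=1$ when all $c_i\ge2$, and $s=2$ when $\min c_i=1$) will be treated separately.

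Since a Gorenstein ring is nearly Gorenstein, (1) then follows from (2) together with the computation for $\cc=(1,1,1)$.
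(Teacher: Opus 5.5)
\textbf{The gap: part (2) is not proved.} You do not show that $\Pc$ is not nearly Gorenstein for $\cc\neq(1,1,1)$, and your ``only if'' in (1) rests on this. The remedy you sketch cannot work. In the case $(\ell,\ell')=(2,-1)$ your plan only uses $\sum b_i\le 2(C-1)-1$, which gives $\sum b'_i\ge 2-C$. Then for any $\xb^{\bb''}t^m\in\omega$ the coordinate-sum estimate yields at best $\sum(b'_i+b''_i)\ge (m-1)(C-1)$. That is exactly what membership in $(m-1)\Pc$ permits, so no test element, for any $m$, gives a contradiction from coordinate sums. Moreover, $(mc_1-1,mc_2-1,mc_3-1)$ is interior to $m\Pc$ only for $m\le 2$.

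The choice of lowered coordinate also matters. For $\cc=(2,2,1)$ one checks that $x_1^2x_2^2t=(x_1^3x_2^3x_3t^2)(x_1^{-1}x_2^{-1}x_3^{-1}t^{-1})$, with the first factor in $\omega$ and the second in $\omega^{-1}$. So your element $x_1^{c_1}x_2^{c_2}x_3^{c_3-1}t$ does lie in $\omega\cdot\omega^{-1}$ when $c_3=1$.

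\textbf{The missing idea.} As in the paper, relabel so that $c_3\ge 2$, which is possible since $\cc\neq(1,1,1)$. Then, for $\ell'=-1$, split on $\bb$.
If $b_i\le 2c_i-2$ for some $i$, then $\sum b_i\le 2C-4$. Multiplying $\xb^{\bb'}t^{-1}$ by $\xb^{(2c_1-1,2c_2-1,2c_3-1)}t^2\in\omega$ gives a degree-one monomial with exponent sum at least $C>C-1$, a contradiction.
Otherwise $\bb=(2c_1-1,2c_2-1,2c_3-1)$, so $\bb'=(1-c_1,1-c_2,-c_3)$ and $b'_3=-c_3\le -2$. This contradicts $b'_i\ge -1$, which follows from $(x_1x_2x_3t^2)(\xb^{\bb'}t^{-1})\in K[\Pc]$.
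The cases $\ell'=-2$ and $\ell'=0$ do go through along your lines. The latter needs the coordinatewise bounds $b_i\le c_i-1$, not just $\sum b_i\le C-2$.

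\textbf{Two smaller points in (3).}
First, your non-levelness argument uses the wrong criterion. With $s=1$ you must show that $(2c_1-1,2c_2-1,2c_3-1)$ is not an interior lattice point of $\Pc$ plus an \emph{arbitrary} lattice point of $\Pc$. Showing it is not a sum of two interior points is irrelevant. The repair is one line: such a sum has coordinate sum at most $(C-3)+(C-1)<2C-3$.

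Second, for $c_1=1$ you peel off one lattice point at a time, instead of the paper's single split $a'_i=\min\{a_i,2c_i-1\}$. This is a valid alternative route, but you left the ``short case check'' undone. It does close: for $N\ge 3$ the ranges $\max\{0,a_i-(N-1)c_i+1\}\le b_i\le\min\{c_i,a_i-1\}$ are nonempty, and their sums meet the window $\sum a_i-(N-1)(C-1)+1\le\sum b_i\le C-1$.
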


\begin{proof}
As in the proof of Lemma \ref{cube3}, one may assume that $c_i\leq (c_1+c_2+c_3)-c_i$, for each $i=1,2,3$. Furthermore, the same argument as in the proof of Lemma \ref{cube3} shows that $\conv(\Dc(K_3,\mathfrak{c}))$ is defined by the system of linear inequalities $0\leq x_1\leq c_1, 0\leq x_2\leq c_2, 0\leq x_3\leq c_3$ together with $x_1+x_2+x_3\leq c_1+c_2+c_3-1$.  If $(c_1, c_2, c_3)=(1,1,1)$, then $\conv(\Dc(K_3,\mathfrak{c}))$ is Gorenstein (\cite[Theorem 4.5]{HSF_5}).  

\medskip

\noindent
{\bf (First Step)}
Let $\cc\neq (1,1,1)$ and $|\Bc(K_n,\mathfrak{c})|\geq 2$.  We show that $\conv(\Dc(K_3,\mathfrak{c}))$ is not nearly Gorenstein.  Since $\cc\neq (1,1,1)$, at least one component of $\cc$, say, $c_3$ is bigger than one. If $c_1=c_2=1$, then $\Bc(K_3,\mathfrak{c})=\{x_1x_2x_3^2\}$ is a singleton, contradicting our assumption. So, at least one of $c_1$ and $c_2$, say, $c_2$ is bigger than one. In other words, $c_2\geq 2$ and $c_3\geq 2$.

Set $\Pc:=\conv(\Dc(K_3,\mathfrak{c}))$. To show that $\Pc$ is not nearly Gorenstein, it is enough to prove that$$x_1^{c_1}x_2^{c_2} x_3^{c_3-1}t\notin \omega_{K[\Pc]}.\omega_{K[\Pc]}^{-1}.$$By contradiction, assume that there are $\xb^{\bb}t^{\ell}\in \omega_{K[\Pc]}$ and $\xb^{\bb'}t^{\ell'}\omega_{K[\Pc]}^{-1}$ for which $$x_1^{c_1}x_2^{c_2} x_3^{c_3-1}t=(\xb^{\bb}t^{\ell})(\xb^{\bb'}t^{\ell'})=\xb^{\bb+\bb'}t^{\ell+\ell'}.$$Therefore, $\bb+\bb'=(c_1, c_2, c_3-1)$ and $\ell+\ell'=1$. By the same argument as in the proof of Lemma \ref{cube3}, one has  $\ell'\geq -2$.

\medskip

\noindent
{\bf (Case 1)} Let $\ell'=-2$. Then the same argument as in the proof of Lemma \ref{cube3} shows that $\xb^{\bb'}=x_1^{-1}x_2^{-1}x_3^{-1}$. On the other hand, $x_1^{2c_1-1}x_2^{2c_2-1} x_3^{2c_3-1}t^2\in\omega_{K[\Pc]}$.  However,
\begin{eqnarray*}
& & (x_1^{2c_1-1}x_2^{2c_2-1} x_3^{2c_3-1}t^2)(\xb^{\bb'}t^{\ell'})\\
&=&  (x_1^{2c_1-1}x_2^{2c_2-1} x_3^{2c_3-1}t^2)(x_1^{-1}x_2^{-1}x_3^{-1}t^{-2})\\  &=&  x_1^{2c_1-2}x_2^{2c_2-2}x_3^{2c_3-2}  
\end{eqnarray*}
does not belong to $K[\Pc]$, as $c_2\geq 2$ and $c_3\geq 2$, a contradiction. 

\medskip

\noindent
{\bf (Case 2)} Let $\ell'=-1$. It follows from $\ell+\ell'=1$ that $\ell=2$. Let $\bb=(b_1,b_2,b_3)$ and $\bb'=(b_1',b_2',b_3')$. We deduce from $\xb^{\bb}t^2=\xb^{\bb}t^{\ell}\in \omega_{K[\Pc]}$ that $\bb\in 2(\Pc\setminus\partial\Pc)$. Hence, $b_i\leq 2c_i-1$ for each $i$. If $b_i\leq 2c_i-2$. for some $i$, then$$b_1+b_2+b_3\leq 2(c_1+c_2+c_3)-4,$$and the same argument as in Case 2 of the proof of Lemma \ref{cube3} yields a contradiction. Let $b_i=2c_i-1$, for each $i\in [3]$. It follows from $\bb+\bb'=(c_1, c_2, c_3-1)$ that $\bb'=(1-c_1, 1-c_2,-c_3)$. In particular, $b_3'\leq -2$, as $c_3\geq 2$. On the other hand, since $x_1x_2x_3t^2\in \omega_{K[\Pc]}$ and $\xb^{\bb'}t^{\ell'}\in \omega_{K[\Pc]}^{-1}$, one has
\begin{eqnarray*}   
(x_1\cdots x_nt^2)(\xb^{\bb'}t^{\ell'})\in K[\Pc].
\end{eqnarray*}
This implies that any component of $\bb'$ is at least $-1$, a contradiction.

\medskip

\noindent
{\bf (Case 3)} Let $\ell'\geq 0$. Since $\xb^{\bb}t^{\ell}\in \omega_{K[\Pc]}$, we have $\ell\geq 1$. It follows from $\ell+\ell'=1$ that $(\ell.\ell')=(1,0)$. The same argument as in Case 1 of the proof of Lemma \ref{cube3} yields a contradiction.

Hence $\Pc=\conv(\Dc(K_3,\mathfrak{c}))$ is not nearly Gorenstein, as desired.

\medskip

\noindent
{\bf (Second Step)}
We now turn to the discussion of (3).
Let $\min\{c_1, c_2, c_3\}\geq 2$.  Then $(2c_1-1, 2c_2-1, 2c_3-1)\in \ZZ_{>0}^3$ belongs to 
$2(\conv(\Dc(K_3,\mathfrak{c})) \setminus \partial \conv(\Dc(K_3,\mathfrak{c})))$.
However, it cannot be written as the sum of a point belonging to $\conv(\Dc(K_3,\mathfrak{c})) \cap \ZZ^n$ and a point belonging to $(\conv(\Dc(K_3,\mathfrak{c})) \setminus \partial \conv(\Dc(K_3,\mathfrak{c})))\cap \ZZ^n$.  So, $\conv(\Dc(K_3,\mathfrak{c}))$ is not level.

Let $\min\{c_1, c_2, c_3\}=1$, say,  $c_1=1$. Then $$(\conv(\Dc(K_3,\mathfrak{c})) \setminus \partial \conv(\Dc(K_3,\mathfrak{c})))\cap \ZZ^n=\emptyset.$$
Let $\ab=(a_1,a_2,a_3) \in 
N(\conv(\Dc(K_3,\mathfrak{c})) \setminus \partial \conv(\Dc(K_3,\mathfrak{c})))\cap \ZZ^n$, where $3 \leq N \in \ZZ$.  For each $i=1,2,3$, set $a_i':=\min\{a_i,2c_i-1\}$ and $a_i'':=a_i-a_i'$.  Furthermore, set $\ab':=(a_1',a_2',a_3')$ and $\ab'':=(a_1'',a_2'',a_3'')$. Then $\ab=\ab'+\ab''$. It is clear that $\ab' \in 2(\conv(\Dc(K_3,\mathfrak{c})) \setminus \partial \conv(\Dc(K_3,\mathfrak{c})))$.  To show that $\conv(\Dc(K_3,\mathfrak{c}))$ is level, we must prove that $\ab''\in (N-2)\conv(\Dc(K_3,\mathfrak{c}))$. One has $a_i\leq Nc_i-1$, for each $i\in [3]$. As a consequence, one can easily verify that $0\leq a_i''\leq (N-2)c_i$, for each $i\in [3]$. Hence, to show that $\ab''\in (N-2)\conv(\Dc(K_3,\mathfrak{c}))$, we only need to prove that$$a_1''+a_2''+a_3''\leq (N-2)(c_1+c_2+c_3-1).$$It follows from $c_1=1$ that $a_1'=1$. Thus, $a_1''=a_1-1$. If $a_2\geq 2c_2-1$ and $a_3\geq 2c_3-1$, then
\begin{eqnarray*}
&& a_1''+a_2''+a_3''\\&=  &(a_1-1)+(a_2-2c_2+1)+(a_3-2c_3+1)\\  &= &(a_1+a_2+a_3)-2(c_2+c_3)+1\\
&<&   N(c_1+c_2+c_3-1)-2(c_2+c_3)+1\\ &= &(N-2)(c_2+c_3)+1\\ &=&(N-2)(c_1+c_2+c_3-1)+1,
\end{eqnarray*}
where the third and the fourth equality follow from $c_1=1$. Therefore, suppose that at least one of the inequalities $a_2\geq 2c_2-1$ and $a_3\geq 2c_2$ do not hold. Let $a_3\leq 2c_3-2$. Then $a_3'=a_3$. Consequently, $a_3''=0$. Since $a_i''\leq (N-2)c_i$ for $i=1,2$, we deduce that
\begin{align*}
& a_1''+a_2''+a_3''\leq (N-2)(c_1+c_2)\leq (N-2)(c_1+c_2+c_3-1),
\end{align*}
as desired.
\hspace{11.6cm}
\end{proof}

Now, combining \cite[Theorem 4.5]{HSF_5} and Lemmas \ref{cube}, \ref{cube2}, \ref{cube3} and \ref{n=3} immediately yields the following Theorem \ref{main_A} on $n \geq 4$ and Theorem \ref{main_B} on $n=3$.

\begin{Theorem}
    \label{main_A}
    Let $n \geq 4$.

    \begin{itemize}
        \item[(1)] 
        A lattice polytope $\Pc \subset \RR^n$ of dimension $n$ is a Gorenstein $K_n$-polytope if and only if $\Pc=\Qc_{k_1,\ldots, k_n}$, where either (i) $k_1 = \cdots = k_n = 2$, or (ii) $n$ is even and $k_1 = \cdots = k_n = 1$.
        \item[(2)] 
        A lattice polytope $\Pc \subset \RR^n$ of dimension $n$ is a nearly Gorenstein $K_n$-polytope if and only if $\Pc=\Qc_{k_1,\ldots, k_n}$, where $k_1+\cdots +k_n$ is even, $k_i\leq \sum_{1 \leq j \leq n, \, j \neq i} k_j$ for each $1 \leq i \leq n$, and either each $k_i \leq 2$ or each $k_i \geq 2$.
       
        \item[(3)]
        A lattice polytope $\Pc \subset \RR^n$ of dimension $n$ is a level $K_n$-polytope if and only if $\Pc=\Qc_{k_1,\ldots, k_n}$, where $k_1+\cdots +k_n$ is even and $k_i\leq \sum_{1 \leq j \leq n, \, j \neq i} k_j$ for each $1 \leq i \leq n$.
    \end{itemize}
\end{Theorem}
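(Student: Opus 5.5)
The plan is to split every $K_n$-polytope $\Pc=\conv(\Dc(K_n,\cc))$ into two types according to $|\Bc(K_n,\cc)|$, and then apply the earlier lemmas to each type.

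\emph{Step 1: reduce to cubes.} If $|\Bc(K_n,\cc)|\geq 2$, then Lemma \ref{cube3} (here $n\geq 4$) says that $\Pc$ is neither nearly Gorenstein nor level. It is then not Gorenstein either, since Gorenstein implies both level and nearly Gorenstein. So in all three parts we may assume $\Bc(K_n,\cc)=\{x_1^{k_1}\cdots x_n^{k_n}\}$ is a singleton. In that case $\Mc(K_n,\cc)$ is the set of divisors of $x_1^{k_1}\cdots x_n^{k_n}$, and therefore $\Pc=\Qc_{k_1,\ldots,k_n}$. Each $k_i\geq 1$, because $\dim\Pc=n$ (equivalently, every $\eb_i\in\Dc(K_n,\cc)$).

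\emph{Step 2: characterize which cubes occur.} Conversely, Lemma \ref{cube} says exactly which cubes $\Qc_{k_1,\ldots,k_n}$ are $K_n$-polytopes: those with $k_1+\cdots+k_n$ even and $k_i\leq\sum_{j\neq i}k_j$ for every $i$. Combining this with Lemma \ref{cube2}(3), which says every cube is level, gives part (3) immediately.

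\emph{Step 3: parts (2) and (1).} Part (2) follows in the same way from Lemma \ref{cube2}(2): the nearly Gorenstein $K_n$-polytopes are the admissible cubes with either all $k_i\leq 2$ or all $k_i\geq 2$. For part (1), Lemma \ref{cube2}(1) says the Gorenstein cubes are those with all $k_i=2$ or all $k_i=1$. It remains to intersect these with the conditions of Lemma \ref{cube}:
\begin{itemize}
\item For all $k_i=2$, the sum $2n$ is even and $2\leq 2(n-1)$, so these cubes always occur.
\item For all $k_i=1$, the sum $n$ must be even, while $1\leq n-1$ holds automatically. This is the parity condition in (ii).
\end{itemize}
This recovers the statement of \cite[Theorem 4.5]{HSF_5}.

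\emph{Main obstacle.} Step 1 is the only point that needs care. We must justify that a singleton $\Bc$ gives precisely the cube, i.e. that $\conv$ of all divisors of one monomial is the box. This is immediate, since the divisors of $x_1^{k_1}\cdots x_n^{k_n}$ are exactly the lattice points of $\Qc_{k_1,\ldots,k_n}$. Everything else is bookkeeping of the earlier lemmas.
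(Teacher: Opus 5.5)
Your proposal is correct and follows the paper's own argument: the paper obtains the theorem by exactly this combination of Lemma~\ref{cube3} (ruling out $|\Bc(K_n,\cc)|\geq 2$), Lemma~\ref{cube} (which cubes are $K_n$-polytopes), and Lemma~\ref{cube2} together with \cite[Theorem 4.5]{HSF_5} (which cubes have each property). Your remark that Gorenstein implies level is what lets Lemma~\ref{cube3} also exclude the non-cube case in part (1).
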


Let $(c_1,c_2,c_3) \in (\ZZ_{>0})^3$ and $\Sc_{c_1,c_2,c_3} \subset \RR^3$ the lattice polytope of dimension $3$ which is defined by the system of linear inequalities $0\leq x_1\leq c_1, 0\leq x_2\leq c_2, 0\leq x_3\leq c_3$ together with $x_1+x_2+x_3\leq c_1+c_2+c_3-1$.

\begin{Theorem}
    \label{main_B}
    Let $K_3$ be the triangle on $3$ vertices. 

    \begin{itemize}
        \item[(1)] 
        A Gorenstein $K_3$-polytope is $\Qc_{2,2,2}$ and $\Sc_{1,1,1}$. 
        \item[(2)] 
         A nearly Gorenstein $K_3$-polytope is $\Qc_{k_1,k_2,k_3}$, where (i) $k_1+k_2+k_3$ is even, (ii) $k_i \leq (k_1+k_2+k_3)-k_i$ for each $1 \leq i \leq 3$ and (iii) either each $k_i \leq 2$ or each $k_i \geq 2$, and $\Sc_{1,1,1}$ 
        \item[(3)]
         A level $K_3$-polytope is $\Qc_{k_1,k_2,k_3}$, where (i) $k_1+k_2+k_n$ is even and (ii) $k_i \leq (k_1+k_2+k_3)-k_i$ for each $1 \leq i \leq 3$, and 
         $\Sc_{c_1,c_2,c_3}$ with $\min\{c_1, c_2, c_3\}=1$.       
    \end{itemize}
\end{Theorem}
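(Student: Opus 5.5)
The plan is to split according to whether $|\Bc(K_3,\cc)|=1$ or $|\Bc(K_3,\cc)|\geq 2$, exactly as in the proof of Theorem \ref{main_A}. Every $K_3$-polytope falls into one of these two cases, so the three classifications will follow once each case is settled.

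\emph{Case $|\Bc(K_3,\cc)|=1$.} If $\Bc(K_3,\cc)=\{x_1^{k_1}x_2^{k_2}x_3^{k_3}\}$, then $\Mc(K_3,\cc)$ consists of all divisors of this monomial, so $\conv(\Dc(K_3,\cc))=\Qc_{k_1,k_2,k_3}$. By Lemma \ref{cube} (valid for $n\geq 3$), the cubes arising this way are exactly those with $k_1+k_2+k_3$ even and $k_i\leq (k_1+k_2+k_3)-k_i$. Lemma \ref{cube2} then does the rest. By part (1), the Gorenstein cubes are $\Qc_{2,2,2}$ and $\Qc_{1,1,1}$; but $\Qc_{1,1,1}$ has odd coordinate sum, so it is excluded and only $\Qc_{2,2,2}$ survives. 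Parts (2) and (3) give the nearly Gorenstein and level cubes under conditions (i)--(iii).

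\emph{Case $|\Bc(K_3,\cc)|\geq 2$.} After the reduction $c_i\le (c_1+c_2+c_3)-c_i$ from the proof of Lemma \ref{cube3}, the facet computation there (via Lemma \ref{facets}) shows that $\conv(\Dc(K_3,\cc))=\Sc_{c_1,c_2,c_3}$. Lemma \ref{n=3} then classifies these polytopes:
\begin{itemize}
\item[(a)] the Gorenstein and nearly Gorenstein ones are exactly $\Sc_{1,1,1}$;
\item[(b)] the level ones are exactly those with $\min c_i=1$.
\end{itemize}
This gives the $\Sc$-part of each of (1)--(3).

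The step needing most care is the converse direction in the second case. We must check that every $\Sc_{c_1,c_2,c_3}$ named in the statement (those with $\min c_i=1$) really is a $K_3$-polytope with $|\Bc|\geq 2$. I would show this by exhibiting a suitable $\cc$. Take $\cc=(c_1,c_2,c_3)$ itself, with the odd-sum and triangle-inequality normalization. The degree argument of Lemma \ref{cube3} then gives $\delta=(c_1+c_2+c_3-1)/2$, and the rotation $x_hv/x_{h'}$ produces at least two generators.

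The one gap is the normalization itself. The statement lists $\Sc_{c_1,c_2,c_3}$ with $\min c_i=1$ without explicitly imposing odd sum or the triangle inequalities. So one should check either that these constraints are implicitly forced, or that the statement is read as ranging over those $\Sc$ that actually arise. I would state this reading and note that the normalization is harmless, since replacing $c_i$ by $(c_1+c_2+c_3)-c_i$ does not change $\Bc$.
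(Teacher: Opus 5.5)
Your proposal is correct and follows the paper's own route. The paper's proof is just the one-line combination of Lemmas \ref{cube}, \ref{cube2} and \ref{n=3} (with \cite[Theorem 4.5]{HSF_5}): the singleton case gives the cubes, where $\Qc_{1,1,1}$ is discarded for odd sum, and the case $|\Bc(K_3,\cc)|\geq 2$ gives the polytopes $\Sc_{c_1,c_2,c_3}$.

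Your extra check that the listed $\Sc$'s actually occur is something the paper leaves implicit. The constraints are genuinely not forced. For example, $\Sc_{1,1,2}$ is not a $K_3$-polytope, since the $c_i$ are recovered as $\max x_i$ and $1+1+2$ is even. So only your second reading is valid, and it amounts to $\Sc_{1,k,k}$ up to permutation. The normalization is in fact vacuous when $|\Bc|\geq 2$: a vector violating it normalizes to an even sum, hence to a singleton $\Bc$.
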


\begin{Corollary}
    \label{NearLevel}
    Let $n \geq 3$.  Every nearly Gorenstein $K_n$-polytope is level.
\end{Corollary}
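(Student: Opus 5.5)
The plan is to read the corollary directly off the classifications in Theorems \ref{main_A} and \ref{main_B}, splitting according to whether $|\Bc(K_n,\cc)|=1$ or $|\Bc(K_n,\cc)|\geq 2$.

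Let $\Pc=\conv(\Dc(K_n,\cc))$ be a nearly Gorenstein $K_n$-polytope.

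\emph{Case $|\Bc(K_n,\cc)|=1$.} Write $\Bc(K_n,\cc)=\{x_1^{k_1}\cdots x_n^{k_n}\}$. Then $\Mc(K_n,\cc)$ is exactly the set of divisors of this monomial, so $\Pc=\Qc_{k_1,\ldots,k_n}$. Every cube is level by Lemma \ref{cube2}(3), so $\Pc$ is level, and nothing further is needed.

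\emph{Case $|\Bc(K_n,\cc)|\geq 2$.} For $n\geq 4$, Lemma \ref{cube3} says $\Pc$ cannot be nearly Gorenstein, so this case does not occur. For $n=3$, Lemma \ref{n=3}(2) forces $\cc=(1,1,1)$. Then $\min\{c_1,c_2,c_3\}=1$, and Lemma \ref{n=3}(3) shows $\Pc$ is level. Alternatively, Lemma \ref{n=3}(1) shows $\Pc=\Sc_{1,1,1}$ is Gorenstein, hence level.

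The only point needing care is that this dichotomy covers every $K_n$-polytope. This holds because $\Bc(K_n,\cc)$ is nonempty, by the definition of $\delta_{\cc}(I(K_n))$. There is no real obstacle beyond invoking the lemmas in the right order.
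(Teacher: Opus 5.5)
Your proposal is correct and follows the paper's route: Theorems \ref{main_A} and \ref{main_B} are assembled from the same split into singleton $\Bc(K_n,\cc)$ (a cube, which is level by Lemma \ref{cube2}(3)) versus $|\Bc(K_n,\cc)|\geq 2$ (excluded for $n\geq 4$ by Lemma \ref{cube3}, and forcing $\cc=(1,1,1)$ for $n=3$ by Lemma \ref{n=3}). The corollary is then read off by comparing parts (2) and (3) of those theorems. Going straight to the lemmas, as you do, has the small advantage that Lemma \ref{cube} is not needed, since every cube is level.
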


\section{Complete bipartite graphs}
Let $K_{m,n}$ denote the complete bipartite graph on the vertex set
\[
V(K_{m,n}) = \{x_1, \ldots, x_m\} \cup \{x_{m+1}, \ldots, x_{m+n}\}
\]
and let $\cc=(c_1, \ldots, c_m, c_{m+1}, \cdots, c_{m+n} \in (\ZZ_{>0})^{m+n}$.

As is discussed in \cite[Lemma 2.2]{HSF_6}, if
$$c_1+\cdots +c_m=c_{m+1}+\cdots =c_{m+n},$$ then $\Bc(K_{m,n},\mathfrak{c})=\{x_1^{c_1}\cdots x_{m+n}^{c_{m+n}}\}$ and $\conv(K_{m,n},\mathfrak{c}))=\Qc_{c_1, \ldots, c_{m+n}}$ is a cube.  

Let $c_1+\cdots +c_m\neq c_{m+1}+\cdots =c_{m+n}$, say, $c_1+\cdots +c_m>c_{m+1}+\cdots +c_{m+n}$.  If there is $i\in [m]$ with $c_i> c_{m+1}+\cdots + c_{m+n}$, then one replaces $c_i$ with $c_{m+1}+\cdots + c_{m+n}$. This substitution does not change $\Bc(K_{m,n},\mathfrak{c})$. So, throughout the present section, we suppose that $\cc=(c_1, \ldots, c_m, c_{m+1}, \ldots, c_{m+n}) \in (\ZZ_{>0})^{m+n}$ satisfies 
\begin{itemize}
    \item $c_1+\cdots +c_m>c_{m+1}+\cdots +c_{m+n}$,
    \item
    $c_i\leq c_{m+1}+\cdots +c_{m+n}$ for each $i\in [m]$.
\end{itemize}

We first recall \cite[Lemma 2.2]{HSF_6}.

\begin{Lemma}
\label{Osaka}
Under the above hypothesis, set$$A:=\{i\in [m] : c_i=c_{m+1}+\cdots +c_{m+n}\}, \quad B:=[m]\setminus A.$$ Then $\conv(\Dc(K_{m,n},\mathfrak{c})) \subset \RR^{m+n}$ is defined by the system of linear inequalities 

$\bullet$ $0\leq x_i\leq c_i, \quad i=m+1, \cdots, m+n$;

$\bullet$ $0\leq x_i\leq c_i, \quad i\in B$;

$\bullet$ $0\leq x_i, \quad i\in A$;

$\bullet$ $x_1+\cdots +x_m\leq c_{m+1}+\cdots +c_{m+n}$.
\end{Lemma}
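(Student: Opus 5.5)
The plan is to compute the ground set rank function $\rho:=\rho_{(K_{m,n},\cc)}$ explicitly, determine which subsets are $\rho$-closed and $\rho$-inseparable, and then read off the facets from Lemma \ref{facets}. Set $d:=c_{m+1}+\cdots+c_{m+n}$. First we show $\delta:=\delta_{\cc}(I(K_{m,n}))=d$ and describe $\Bc(K_{m,n},\cc)$. Every edge uses exactly one variable from $\{x_{m+1},\ldots,x_{m+n}\}$, so $\delta\leq d$. Since $c_1+\cdots+c_m>d$, a greedy matching attains $\delta=d$. Consequently, $\Bc(K_{m,n},\cc)$ consists of the monomials $x_1^{a_1}\cdots x_m^{a_m}x_{m+1}^{c_{m+1}}\cdots x_{m+n}^{c_{m+n}}$ with $0\leq a_i\leq c_i$ and $a_1+\cdots+a_m=d$. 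Any such monomial factors into $d$ edges by pairing the $x_{m+j}$-exponents against the $a_i$'s, so conversely every such monomial lies in $\Bc$.

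From this description we compute $\rho(X)$ for $X=X_1\cup X_2$, where $X_1\subset[m]$ and $X_2\subset\{m+1,\ldots,m+n\}$. One gets
\[
\rho(X)=\min\Bigl\{\sum_{i\in X_1}c_i,\ d\Bigr\}+\sum_{j\in X_2}c_j .
\]
This is achieved by filling the $a_i$ with $i\in X_1$ greedily. The remaining mass $d-\sum_{X_1}a_i$ can be placed in $[m]\setminus X_1$ if needed, because the total capacity exceeds $d$.

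Next we classify the closed and inseparable subsets. Any $X$ with $X_2\neq\emptyset$ and $X\neq X_2$ is separable, since $\rho(X)=\rho(X_1)+\rho(X_2)$; hence among such sets only singletons $\{j\}$ with $j>m$ survive. These are closed, because adding any element strictly increases $\rho$ (adding $i\in[m]$ adds $\min\{c_i,d\}>0$). For $X\subset[m]$ with $\sum_{X}c_i<d$, the set $X$ is separable unless it is a singleton. For $X\subset[m]$ with $\sum_X c_i\geq d$, we have $\rho(X)=d$. Such an $X$ is closed only if $X=[m]$, since adding an element of $[m]$ does not increase $\rho$. It is inseparable iff no partition $X=X'\cup X''$ has $\rho(X')+\rho(X'')=d$; for $X=[m]$ this holds because $\sum_{i\in[m]}c_i>d$ forces at least one part to be capped, so the sum exceeds $d$.

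Finally we determine when the singletons $\{i\}$ with $i\in[m]$ are closed. If $i\in A$, then $\rho(\{i\})=d=\rho(\{i,i'\})$ for any $i'\in[m]$, so $\{i\}$ is not closed (here $m\geq2$ since $\sum_{[m]}c_i>d$). If $i\in B$, adding any element increases $\rho$, so $\{i\}$ is closed. By Lemma \ref{facets}, the facets are therefore $x_i\geq0$ for all $i$, $x_i\leq c_i$ for $i\in B\cup\{m+1,\ldots,m+n\}$, and $x_1+\cdots+x_m\leq d$, which gives the stated system.

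The main obstacle I expect is the careful verification of the formula for $\rho$, and in particular the inseparability of $[m]$ when some parts could be capped exactly at $d$. The latter is handled by the strict inequality $\sum_{i\in[m]}c_i>d$ together with $c_i>0$.
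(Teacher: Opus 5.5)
Your proof is correct. The paper gives no argument of its own here; it recalls the statement from \cite[Lemma 2.2]{HSF_6}. Your route is the same method the paper uses for $K_n$ in the proof of Lemma \ref{cube3}: describe $\Bc(K_{m,n},\cc)$ explicitly, compute $\rho(X)=\min\{\sum_{i\in X_1}c_i,d\}+\sum_{j\in X_2}c_j$, classify the $\rho$-closed and $\rho$-inseparable sets, and read off the facets from Lemma \ref{facets}.

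Three small points of wording, none of which is a gap:

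\begin{itemize}
\item \emph{Inseparability of $[m]$.} The inequality $\sum_{i\in[m]}c_i>d$ does not force a part to be capped. For instance, $d=5$, $c_1=c_2=3$, $X'=\{1\}$, $X''=\{2\}$ has neither part capped. The correct dichotomy is this. If neither part is capped, then $\rho(X')+\rho(X'')=\sum_{i\in[m]}c_i>d$. If $X'$ is capped, then $\rho(X')+\rho(X'')=d+\rho(X'')>d$, because $X''\neq\emptyset$.
\item \emph{Closedness of $[m]$.} You should say that $[m]$ is closed, since adding any $j>m$ raises $\rho$ by $c_j>0$. Strictly, for the ``defined by'' statement you only need the inequality $x_1+\cdots+x_m\le d$ to be valid, which it is.
\item \emph{Sets inside the second part.} Sets $X=X_2$ with $|X_2|\ge 2$ are separable because $\sum_{j\in X_2}c_j$ is additive. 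Your sentence on sets meeting $\{m+1,\ldots,m+n\}$ only treats the case $X_1\neq\emptyset$ explicitly, but this case is immediate from your formula.

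\end{itemize}
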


\begin{Lemma} \label{compbips3}
Set$$s:=\max\bigg\{\Big\lfloor\frac{m}{c_{m+1}+\cdots +c_{m+n}}\Big\rfloor+1
\bigg\}.$$If $s\geq 3$, then $\conv(\Dc(K_{m,n},\mathfrak{c}))$ is nearly Gorenstein if and only if the following conditions hold.

\begin{itemize}
    \item [(i)] $s=3$;
    \item [(ii)] $c_{m+1}=\cdots =c_{m+n}=1$;
    \item [(iii)] $c_i\in \{1,n\}$ for each $i\in [m]$ and $|\{i\in [m] : c_i=n\}|\geq 2m-3n+1$. 
\end{itemize}  
\end{Lemma}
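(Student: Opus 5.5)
The plan is to argue entirely with the facet description of Lemma~\ref{Osaka}. Write $C:=c_{m+1}+\cdots+c_{m+n}$ and $\Pc:=\conv(\Dc(K_{m,n},\mathfrak{c}))$. First I would make the generators of $\omega_{K[\Pc]}$ and the elements of $\omega_{K[\Pc]}^{-1}$ explicit. A monomial $\xb^{\ab}t^N$ lies in $\omega_{K[\Pc]}$ if and only if the following hold:
\begin{itemize}
\item[$\bullet$] $a_i\geq 1$ for all $i$;
\item[$\bullet$] $a_i\leq Nc_i-1$ for $i\in B\cup\{m+1,\ldots,m+n\}$;
\item[$\bullet$] $a_1+\cdots+a_m\leq NC-1$.
\end{itemize}
From this, the least $N$ admitting an interior lattice point is $\max\{2,\lfloor m/C\rfloor+1\}$, which is the $s$ of the statement. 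I read the ``$\max$'' in the definition of $s$ as implicitly including $2$, and I would confirm this at the outset.

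For $\omega_{K[\Pc]}^{-1}$ I would use the usual toric description. A Laurent monomial $\xb^{\bb}t^{k}$ lies in $\omega^{-1}$ if and only if adding $(\bb,k)$ to every point of the interior cone lands in the cone $\RR_{\geq 0}(\Pc\times\{1\})$. Since $\Pc$ is normal, this amounts to a system of linear inequalities on $(\bb,k)$, one for each facet. The facet of $x_i\geq 0$ forces $b_i\geq -1$. The facets $x_i\leq c_i$ and $\sum_{i\le m}x_i\leq C$ force the corresponding slack to drop by at most the minimum slack achieved by interior points in each degree. The point is that the slacks of the three facet types cannot be lowered independently, and this is where the numerical conditions come from.

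For necessity, I would imitate the proofs of Lemmas~\ref{cube2} and~\ref{cube3}. I pick well-chosen elements $\xb^{\ab}t\in\mm$, such as $\prod_{i\le m}x_i^{\alpha_i}\prod_{j>m}x_j^{c_j}t$ with $\alpha$ on the face $\sum_{i\le m}x_i=C$, or vertices with some $x_j=0$, and suppose $\xb^{\ab}t=(\xb^{\bb}t^{\ell})(\xb^{\bb'}t^{\ell'})$ with $\ell\geq s$ and $\ell+\ell'=1$, so that $\ell'\leq 1-s$. Testing $\xb^{\bb'}t^{\ell'}$ against the particular canonical generators, namely the ``all ones'' interior point in degree $s$ and the extreme interior points $x_j^{Nc_j-1}$ for the bounded coordinates, in suitable degrees $N$, gives lower bounds on $\ell'$ and on the coordinates of $\bb'$. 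These should produce contradictions unless three things hold:
\begin{itemize}
\item[$\bullet$] $s=3$, since when $s\geq 4$ the required negative degree $\ell'\leq -3$ is too large to be absorbed;
\item[$\bullet$] every $c_{m+1},\ldots,c_{m+n}$ equals $1$, since otherwise the coordinate $x_j=c_j$ facet on the right side behaves like the $k_n\geq 3$ coordinate in Lemma~\ref{cube2}(2);
\item[$\bullet$] every $c_i$ with $i\le m$ lies in $\{1,n\}$, i.e.\ each $i$ is either tightly bounded or lies in $A$, since intermediate values $1<c_i<n$ again give a mixed-slack obstruction.
\end{itemize}

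The counting condition $|\{i: c_i=n\}|\geq 2m-3n+1$ should come from one specific test element. Take a point of $\Pc$ whose left block has $x_i=c_i$ on the indices with $c_i=1$, with total sum $C=n$, and compute the maximal admissible $\bb'$. The degree-$3$ interior points have left sum at most $3n-1$ while needing $\geq m$ ones. An element of degree $-2$ in $\omega^{-1}$ must remove $2$ from the right coordinates and a total of $2n$ from the left block, using only coordinates that can drop below the $-1$ bound, and only the $A$-coordinates can. Balancing these gives exactly the inequality $m-|A|\leq 3n-m-1$ on $|A|$.

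For sufficiency, under (i)--(iii) I would show for every lattice point $\ab\in\Pc$ that $\xb^{\ab}t\in\omega\cdot\omega^{-1}$. The aim is to construct, in the style of the proof of Lemma~\ref{cube2}(2), an explicit degree-$3$ generator $\xb^{\bb}t^3\in\omega$ and $\xb^{\ab-\bb}t^{-2}\in\omega^{-1}$. Here $\bb$ is built by setting $b_j=1$ on the right coordinates with $a_j=0$ and $b_j=2$ otherwise, and distributing the left sum, at most $3n-1$, to be $\geq 1$ on all $m$ left coordinates while placing the large entries on $A$. Verifying membership of $\xb^{\ab-\bb}t^{-2}$ in $\omega^{-1}$ is then a facet-by-facet check.

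I expect the main obstacle to be the exact characterization of $\omega^{-1}$ together with the combinatorial distribution argument, that is, showing that the inequality $|A|\geq 2m-3n+1$ is precisely what makes this distribution possible for every $\ab$. In particular the extreme $\ab$, with the left sum equal to $n$ and concentrated on the $c_i=1$ coordinates, is the case where the inequality is sharp, and I would handle it first.
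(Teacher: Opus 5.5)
Your plan is essentially the paper's proof. For necessity, you test the $\omega^{-1}$-factor of a degree-one monomial against the all-ones interior point and against interior points with one bounded coordinate equal to $Nc_j-1$; this forces $(\ell,\ell')=(3,-2)$, $c_{m+1}=\cdots=c_{m+n}=1$ and $c_i\in\{1,n\}$, and you then get the count from a test point with entries $1$ on the coordinates with $c_i=1$. For sufficiency, you factor through degrees $3$ and $-2$, directly for every $\ab$, whereas the paper inducts on $n-(a_1+\cdots+a_m)$; the direct version also works.

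When you write it out, fix two points. In the count, no coordinate of the $\omega^{-1}$-factor can be below $-1$, so "only the $A$-coordinates can drop below $-1$" is wrong. The actual mechanism is that this factor equals exactly $-1$ on every coordinate with $c_i=1$, so the degree-$3$ factor is $2$ there, and its left sum being at most $3n-1$ then gives $|B|\le 3n-1-m$. In the sufficiency, the degree-$3$ factor must equal $a_i+1$ on every coordinate with $c_i=1$, be at most $a_i+1$ on $A$, and have left sum at least $a_1+\cdots+a_m+2n-1$, not merely at most $3n-1$.
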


\begin{proof}
Let $\Pc=\conv(\Dc(K_{m,n},\mathfrak{c}))$ and recall Lemma \ref{Osaka}.  It follows from the definition of $s$ together with $s\geq 3$ that$$s=\min\{N\geq 1 : N(\Pc\setminus\partial\Pc)\cap \ZZ^{m+n}\neq \emptyset\}.$$
Furthermore, we deduce from the definition of $s$ that$$(s-1)(c_{m+1}+\cdots +c_{m+n})\leq m\leq s(c_{m+1}+\cdots +c_n)-1.$$

\medskip

\noindent
{\bf (``If'')}
Supposing that (i), (ii) and (iii) hold, we show that $\Pc$ is nearly Gorenstein. Let $n=1$. One has $m=2$ and $c_1=c_2=c_3=1$. One easily sees that $\Pc$ is nearly Gorenstein. So, assume that $n\geq 2$. Let $\ab=(a_1, \ldots, a_{m+n})\in\Pc\cap\ZZ^{m+n}$. Using induction on $n-(a_1+\cdots +a_m)$, we show that $\xb^\ab t\in \omega_{K[\Pc]}.\omega_{K[\Pc]}^{-1}$.

Let $a_1+\cdots +a_m=n$. Set $$X:=\{i\in [m] : a_i=c_i=1\}.$$ Then, by (iii), one has $$|X|\leq |\{i\in m: c_i=1\}|\leq m-(2m-3n+1)=3n-m-1.$$This yields that

\begin{eqnarray*}
&&\sum_{i\in [m]\setminus X}a_i-(3n-1)=\sum_{i=1}^ma_i-|X|-(3n-1)\\ &\leq & \sum_{i=1}^ma_i-2|X|-m=\sum_{i=1}^m(a_i-1)-2|X|\\  &=& \sum_{i\in [m]\setminus X}(a_i-1)-2|X|.   
\end{eqnarray*}
Using the above inequality together with $a_1+\cdots +a_m=n$ and $s=3$, one has $$|X|-m\leq \sum_{i\in [m]\setminus X}a_i-(3n-1)+2|X|\leq \sum_{i\in [m]\setminus X}^m(a_i-1).$$These inequalities imply that for each $i\in [m]\setminus X$, there is $a_i'\in \ZZ$ for which $-1\leq a_i'\leq a_i-1$ and$$\sum_{i\in [m]\setminus X}a_i'=\sum_{i\in [m]\setminus X}a_i-(3n-1)+2|X|.$$For each $i\in X\cup ([m+n]\setminus [m])$, set $a_i':=-1$. Define $\ab':=(a_1', \ldots, a_{m+n}')$. Notice that $$\xb^\ab t=(\xb^{\ab-\ab'}t^3)(\xb^{\ab'}t^{-2}).$$By the choice of $\ab'$, one has $\xb^{\ab'}t^{-2}\in \omega_{K[\Pc]}^{-1}$. We show that $\xb^{\ab-\ab'}t^3\in \omega_{K[\Pc]}$. Obviously, each component of $\ab-\ab'$ is positive. If $c_i=1$ for some $i\in [m+n]$, then $a_i-a_i'=a_i+1\leq 2$. Furthermore,
\begin{eqnarray*}
&& (a_1-a_1')+\cdots +(a_m-a_m')\\ &=&(a_1+\cdots +a_m)-(a_1'+\cdots +a_m')\\ & = & n-\Big(\sum_{i\in [m]\setminus X}a_i-(3n-1)+|X|\Big)\\ & = & n-\big(n-(3n-1)\big)=3n-1. 
\end{eqnarray*}
Thus $\xb^{\ab-\ab'}t^3\in \omega_{K[\Pc]}$. This implies that $\xb^\ab t\in \omega_{K[\Pc]}.\omega_{K[\Pc]}^{-1}$, as required.

Let $a_1+\cdots +a_m < n$. Then there is $i\in [m]$ with $x_i\xb^{\ab}t\in K[\Pc]$. Using induction hypothesis, there are $\xb^{\ab_1}t^{\ell_1}\in \omega_{K[\Pc]}$ and $\xb^{\ab_2}t^{\ell_2}\in \omega_{K[\Pc]}^{-1}$ for which$$x_i\xb^{\ab}t=(\xb^{\ab_1}t^{\ell_1})(\xb^{\ab_2}t^{\ell_2}).$$If the $i$th component of $\ab_2$ is nonnegative, then $(\xb^{\ab_2}t^{\ell_2})/x_i\in \omega_{K[\Pc]}^{-1}$. So,$$\xb^{\ab}t=(\xb^{\ab_1}t^{\ell_1})\big((\xb^{\ab_2}t^{\ell_2})/x_i\big)\in \omega_{K[\Pc]}.\omega_{K[\Pc]}^{-1},$$and we are done. Suppose that the $i$th component of $\ab_2$ is negative. It follows from $x_i\xb^{\ab}=\xb^{\ab_1}\xb^{\ab_2}$ that the $i$th component of $\ab_1$ is at least two. Then $(\xb^{\ab_1}t^{\ell_1})/x_i\in \omega_{K[\Pc]}$ and an  argument as above yields that $x_i\xb^{\ab}t\in \omega_{K[\Pc]}.\omega_{K[\Pc]}^{-1}$.

\medskip

\noindent
{\bf (``Only If'')}  Supposing that $\Pc$ is nearly Gorenstein, we show that (i), (ii) and (iii) hold. Let $\bb=(b_1, \ldots, b_{m+n})\in \Pc\cap \ZZ^{m+n}$ satisfy $$b_1+\cdots +b_m=c_{m+1}+\cdots + c_{m+n}.$$Note that since $$c_1+\cdots +c_m>c_{m+1}+\cdots c_{m+n},$$such a point $\bb$ exists. As $\Pc$ is nearly Gorenstein, one has $\xb^{\bb}t\in \omega_{K[\Pc]}.\omega_{K[\Pc]}^{-1}$. Thus, there are $\xb^{\bb'}t^p\in \omega_{K[\Pc]}$ and $\xb^{\bb''}t^{q}\omega_{K[\Pc]}^{-1}$ for which $$\xb^{\bb}t=(\xb^{\bb'}t^{p})(\xb^{\bb''}t^{q})=\xb^{\bb'+\bb''}t^{p+q}.$$ Hence, $\bb'+\bb''=\bb$ and $p+q=1$. Since $x_1\cdots x_{m+n}t^s\in \omega_{K[\Pc]}$ and $\xb^{\bb''}t^{q}\in \omega_{K[\Pc]}^{-1}$, one has
\begin{eqnarray}
\label{eeeeeeee}   
(x_1\cdots x_{m+n}t^s)(\xb^{\bb''}t^{q})\in K[\Pc].
\end{eqnarray}
In particular, $q\geq -s$. Let $q=-s$. Then, by (\ref{eeeeeeee}), one has  $\xb^{\bb''}=x_1^{-1}\cdots x_{m+n}^{-1}$. As $s\geq 3$, one has $x_1\cdots x_{m+n-1}x_{m+n}^2t^s\in \omega_{K[\Pc]}$. Since $\xb^{\bb''}t^{q}\in\omega_{K[\Pc]}^{-1}$, we obtain that 
\begin{eqnarray*}
x_{m+n}&=&(x_1x_2\cdots x_{m+n-1}x_{m+n}^2t^s)(x_1^{-1}\cdots x_{m+n}^{-1}t^{-s})\\ & =&(x_1x_2\cdots x_{m+n-1}x_{m+n}^2t^s)(\xb^{\bb''}t^{q})\in K[\Pc],
\end{eqnarray*}
a contradiction. Hence $q\geq -s+1$.

Set $\bb':=(b_1', \ldots, b_{m+n}')$ and $\bb'':=(b_1'', \ldots, b_{m+n}'')$. Since $p\geq s$, the equality $p+q=1$ implies that $(p,q)=(s,-s+1)$. By (\ref{eeeeeeee}), one has $b_i''\geq -1$ for each $i\in [m+n]$. Note that for any $j\in [m+n]\setminus [m]$, we have $x_j^{sc_j-2}(x_1\cdots x_{m+n})t^s\in\omega_{K[\Pc]}$. Since $\xb^{\bb''}t^q=\xb^{\bb''}t^{-s+1}\in\omega_{K[\Pc]}^{-1}$, one has$$\big(x_j^{sc_j-2}(x_1\cdots x_{m+n})t^s\big)(\xb^{\bb''}t^{-s+1})\in K[\Pc].$$As $b_i''\geq -1$, the exponent of $x_j$ in the above product is at least $sc_j-2$. It follows that $sc_j-2\leq c_j$. This implies that $s=3$ and $c_j=1$ for each $j\in [m+n]\setminus [m]$. Therefore, $(p,q)=(3,-2)$.

Let $2\leq c_i< c_{m+1}+\cdots +c_{m+n}=n$ for some $i\in [m]$. Then$$(Nc_i-1)+(m-1)< Nn,$$for some $N\geq s$. As a consequence, $x_i^{Nc_i-2}(x_1\cdots x_{m+n})t^N\in \omega_{K[\Pc]}$. Since $\xb^{\bb''}t^q=\xb^{\bb''}t^{-2}\in\omega_{K[\Pc]}^{-1}$, one has $$\big(x_i^{Nc_i-2}(x_1\cdots x_{m+n})t^N\big)(\xb^{\bb''}t^{-2})\in K[\Pc].$$However, as $b_i''\geq -1$, the exponent of $x_i$ in the above product is at least$$Nc_i-2> (N-2)c_i,$$where the inequality follows from $c_i\geq 2$. This is a contradiction. Hence $c_i\in\{1,n\}$ for each $i\in [m]$. 

If $n=1$, then $m=2$ and $c_i=1$ for each $i\in [m+n]$. So, (i), (ii) and (iii) hold. Let $n\geq 2$. Assume that there is $i\in [m]$ with $c_i=1$. Note that $x_i^2(x_1\cdots x_{m+n})t^4\in\omega_{K[\Pc]}$. Since $\xb^{\bb''}t^q=\xb^{\bb''}t^{-2}\in\omega_{K[\Pc]}^{-1}$, one has$$\big(x_i^2(x_1\cdots x_{m+n})t^4\big)(\xb^{\bb''}t^{-2})\in K[\Pc].$$As the exponent of $x_i$ in the above product is at most $2$, we conclude that $b_i''\leq -1$. On the other hand, (\ref{eeeeeeee}) yields that $b_i''\geq -1$. Thus, $b_i''=-1$.

Everything we proved so far is valid for each $\bb=(b_1, \ldots, b_{m+n})\in \Pc\cap \ZZ^{m+n}$ satisfying$$b_1+\cdots +b_m=c_{m+1}+\cdots +c_{m+n}=n.$$In what follows, we choose an appropriate $\bb\in \Pc\cap \ZZ^{m+n}$ satisfying the above equality. More precisely, suppose that $$|\{i\in [m] : c_i=n\}|< 2m-3n+1.$$We deduce from the above inequality that $$|\{i\in [m] : c_i=1\}|\geq 3n-m.$$ Recall that $3n-m\leq n$. Without loss of generality, assume that  $c_1=\cdots=c_{3n-m}=1$. Let $\bb\in \Pc\cap \ZZ^{m+n}$ satisfy $b_1=\cdots=b_{n}=b_{m+1}=\cdots=b_{m+n}=1$ and the other components of $\bb$ are zero.  Since $b_i''=-1$ for each $i\in [3n-m]$, we deduce from $\bb=\bb'+\bb''$ that $b_1'=\cdots =b_{3n-m}'=2$. Therefore,
\begin{eqnarray*}
b_1'+\cdots +b_m'&=& \, (b_1'+\cdots +b_{3n-m}')+(b_{3n-m+1}'+\cdots +b_m')\\ &\geq &  2(3n-m)+\big(m-(3n-m)\big)=3n,    
\end{eqnarray*}
contradicting $\xb^{\bb}t^3\in \omega_{K[\Pc]}$. Thus $|\{i\in [m] : c_i=n\}|\geq 2m-3n+1$.
\, \, \, \, \, \,
\end{proof}

\begin{Lemma} \label{compbiplevs3}
Under the situation of Lemma \ref{compbips3}, if $\conv(\Dc(K_{m,n},\mathfrak{c}))$ is nearly Gorenstein, then it is level.   
\end{Lemma}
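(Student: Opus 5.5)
By Lemma \ref{compbips3}, nearly Gorensteinness with $s\geq 3$ forces three conditions: $s=3$; $c_{m+1}=\cdots=c_{m+n}=1$; and every $c_i\in\{1,n\}$ for $i\in[m]$, with at least $2m-3n+1$ indices equal to $n$. The plan is to show directly that these data give a level polytope.

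\textbf{The polytope and its interior.} Write $A=\{i\in[m]: c_i=n\}$ and $B=[m]\setminus A$, as in Lemma \ref{Osaka}. Then $\Pc$ is cut out by
\[
0\leq x_j\leq 1 \ (j>m \text{ or } j\in B),\qquad x_i\geq 0 \ (i\in A),\qquad x_1+\cdots+x_m\leq n .
\]
A lattice point $\ab$ lies in $N(\Pc\setminus\partial\Pc)$ exactly when all of the following hold:
\begin{itemize}
\item $a_j\geq 1$ for all $j$;
\item $a_j\leq N-1$ for $j>m$ and for $j\in B$;
\item $a_1+\cdots+a_m\leq Nn-1$.
\end{itemize}
Levelness means that every such $\ab$ with $N\geq 4$ splits as $\ab=\ab'+\ab''$, where $\ab'\in 3(\Pc\setminus\partial\Pc)\cap\ZZ^{m+n}$ and $\ab''\in (N-3)\Pc\cap\ZZ^{m+n}$.

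\textbf{The splitting.} I would build $\ab'$ coordinatewise.
\begin{itemize}
\item For $j>m$ and for $j\in B$, set $a'_j:=\min\{a_j,2\}$ and $a''_j:=a_j-a'_j$. This gives $1\leq a'_j\leq 2$ and $0\leq a''_j\leq N-3$, so the box constraints hold for both summands.
\item For $i\in A$, the coordinates $a'_i\geq 1$ are still to be chosen.
\end{itemize}
The only remaining constraints are on the first-block sums. With $\sigma:=a_1+\cdots+a_m$ and $\sigma':=a'_1+\cdots+a'_m$, we need
\[
\sigma'\leq 3n-1 \qquad\text{and}\qquad \sigma-\sigma'\leq (N-3)n .
\]
So $\sigma'$ must lie in the interval $[\sigma-(N-3)n,\;3n-1]$, intersected with the range that is achievable coordinatewise.

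Since $a'_i=\min\{a_i,2\}$ is forced up to the choice $1$ or $2$ on $B$, the achievable range is as follows.
\begin{itemize}
\item Minimum: $m$, obtained by taking every $a'_i=1$. On $B$ this is allowed only when $a_i\leq N-2$, since otherwise $a''_i$ would exceed $N-3$.
\item Maximum: $\sum_{i\in B}\min\{a_i,2\}+\sum_{i\in A}a_i$.
\end{itemize}
I therefore expect to proceed greedily. Start from $a'_i=1$ on $A$ and $a'_i=\min\{a_i,2\}$ on $B$. Then increase $a'_i$ on $A$, up to $a_i$, until $\sigma'\geq\sigma-(N-3)n$. Finally, check that $\sigma'\leq 3n-1$ still holds.

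\textbf{The main obstacle.} The crux is the upper bound $\sigma'\leq 3n-1$ at the point where the greedy process stops. Two situations need handling.
\begin{itemize}
\item If the process never increases any $a'_i$ on $A$, then $\sigma'\leq 2|B|+|A|=m+|B|$. This quantity must be at most $3n-1$. That is exactly the counting condition $|A|\geq 2m-3n+1$, equivalently $|B|\leq 3n-m-1$, coming from condition (iii) of Lemma \ref{compbips3}.
\item If the process does increase some $a'_i$ on $A$, it stops at $\sigma'=\sigma-(N-3)n$. Then $\sigma'\leq Nn-1-(N-3)n=3n-1$, using $\sigma\leq Nn-1$.
\end{itemize}
Two further points need care.
\begin{itemize}
\item The greedy can always reach $\sigma-(N-3)n$, because setting $a'_i=a_i$ on $A$ leaves $\sum a''_i=\sum_{i\in B}(a_i-\min\{a_i,2\})\leq |B|(N-3)\leq (N-3)n$. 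The last step uses $|B|\leq n$, which holds since $|B|\leq 3n-m-1$ and $m\geq 2n$ when $s=3$.
\item The coordinates in $B$ with $a_i=1$ force $a'_i=1$ and cause no trouble.
\end{itemize}
Lastly, the degenerate case $n=1$, where $m=2$ and all $c_i=1$, should be checked directly, as in the proof of Lemma \ref{compbips3}.
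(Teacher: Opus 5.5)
Your proposal is correct and follows essentially the same route as the paper: the same splitting $a'_j=\min\{a_j,2\}$ off $A$ with the $A$-coordinates adjusted. It also uses the same two estimates, namely $\sigma\leq Nn-1$ when the sum constraint binds and $|B|\leq 3n-m-1\leq n-1$ (from (iii) and $m\geq 2n$) otherwise. The only difference is cosmetic: the paper raises $\sigma'$ as far as possible (to $3n-1$, or until $a'_i=a_i$ on $A$) and then bounds $\sigma-\sigma'$, whereas you raise $\sigma'$ only until $\sigma-\sigma'\leq (N-3)n$ and then bound $\sigma'$.
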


\begin{proof}
We follow Lemma \ref{compbips3} and its proof. Since $\Pc$ is nearly Gorenstein, we deduce that Lemma \ref{compbips3} (i), (ii) and (iii) hold. Let $n=1$. Then $m=2$ and $c_1=c_2=c_3=1$. So, $\Pc$ is level. Let $n\geq 2$. We may assume that there is an integer $0\leq k\leq m$ with $c_1=\cdots =c_k=n$ and $c_{k+1}=\cdots =c_{m+n}=1$.  Let $A$ and $B$ be the sets defined in Lemma \ref{Osaka}. We know that $\Pc$ is defined by the system of linear inequalities 

$\bullet$ $0\leq x_i\leq 1, \quad i=k+1, \cdots, m+n$;

$\bullet$ $0\leq x_i, \quad  i\in [k]$;

$\bullet$ $x_1+\cdots +x_m\leq n$.   

\noindent
Let $\ab=(a_1, \ldots, a_{m+n})\in N(\Pc\setminus\partial\Pc)\cap\ZZ^{m+n}$ for some integer $N\geq 4$. We must show that there are$$\ab'=(a_1', \ldots, a_{m+n}')\in3(\Pc\setminus\partial\Pc)\cap\ZZ^{m+n}$$ and$$\ab''=(a_1'', \ldots, a_{m+n}'')\in (N-3)\Pc \cap \ZZ^{m+n}$$with $\ab=\ab'+\ab''$.

We know from Lemma \ref{compbips3} (iii) that $k\geq 2m-3n+1$. Therefore, $$k+2(m-k)\leq 3n-1.$$For each $i\in [m+n]\setminus [k]$, set $a_i':=\min\{2,a_i\}$. We deduce from the above inequality that for each $i\in [k]$, there is an integer $a_i'$ with $1\leq a_i'\leq a_i$ for which at least one of the following conditions holds. 

\begin{itemize}
\item [(1)] $a_1'+\cdots +a_m'=3n-1$.
\item [(2)] $a_1'+\cdots +a_m'< 3n-1$ and $a_i'=a_i$, for each $i\in [k]$.
\end{itemize}
In both cases $\ab'=(a_1', \ldots, a_{m+n}')\in 3(\Pc\setminus\partial\Pc)\cap\ZZ^{m+n}$. 

For each $i\in [m+n]$, set $a_i'':=a_i-a_i'$ and put $\ab'':=(a_1'', \ldots, a_{m+n}'')$. To complete our proof, we show that $\ab'':=\ab-\ab'\in (N-3)\Pc$.  It follows that

$\bullet$ $a_i''\geq 0$, for each $i\in [m+n]$, and 

$\bullet$ $a_i''\leq N-3$, for each $i=[m+n]\setminus [k]$. 

\noindent
So, we only need to show that$$a_1''+\cdots +a_m''\leq (N-3)n.$$
If (1) holds, then
\begin{eqnarray*}
a_1''+\cdots +a_m''& = &(a_1+\cdots +a_m)-(a_1'+\cdots +a_m')\\ & \leq & Nn-1-(3n-1)=(N-3)n.   
\end{eqnarray*}
If (2) holds, then set$$X:=\{i\in [m]\setminus [k]: a_i> 2\}.$$For each $i\in [m]\setminus [k]$, one has $a_i''>0$ if and only if $i\in X$. Furthermore,$$|X|\leq m-k\leq m-(2m-3n+1)=3n-1-m\leq n-1,$$where the last inequality follows from $s=3$. Since (2) holds, we have $a_i'=a_i$, for each $i\in [k]$. Therefore, $a_1''=\cdots =a_k''=0$. Hence,
\begin{eqnarray*}
&& a_1''+\cdots +a_m''=\sum_{i\in X}a_i''=\sum_{i\in X}(a_i-a_i')=\sum_{i\in X}(a_i-2)\\ & \leq & \sum_{i\in X}(N-3)=(N-3)|X|\leq (N-3)(n-1)\leq (N-3)n.
\end{eqnarray*}
This completes the proof. 
\hspace{9.1cm}
\end{proof}

\begin{Lemma} \label{compbips2} 
Set$$s:=\max\bigg\{\Big\lfloor\,\frac{m}{c_{m+1}+\cdots +c_{m+n}}\,\Big\rfloor+1, \,  \max\Big\{\Big\lfloor\frac{5-c_i}{2}\Big\rfloor: i\in [m+n]\Big\}\bigg\}.$$If $s=2$, then $\conv(\Dc(K_{m,n},\mathfrak{c}))$ is nearly Gorenstein if and only if one of the following conditions holds.

\begin{itemize}
    \item [($\alpha$)] {\em (i)} $m=2n-1$, {\em (ii)} $c_{m+1}=\cdots =c_{m+n}=1$ and {\em (iii)} $c_i\in \{1,n\}$ for each $i\in [m]$;
    \item [($\beta$)] {\em (i)} $m=2(c_{m+1}+\cdots +c_{m+n})-1$, {\em (ii)} $c_i\in\{1, c_{m+1}+\cdots +c_{m+n}\}$ for each $i\in [m]$ and {\em (iii)} $c_j\in\{1,2\}$ for each $j\in [m+n]\setminus [m]$;
    \item [($\gamma$)] {\em (i)} $m=(c_{m+1}+\cdots +c_{m+n})-1$, {\em (ii)} $c_i\in\{2, c_{m+1}+\cdots +c_{m+n}\}$ for each $i\in [m]$ and {\em (iii)} $c_j\in\{1,2\}$ for each $j\in [m+n]\setminus [m]$;
    \item [($\delta$)] {\em (i)} $m\geq c_{m+1}+\cdots +c_{m+n}+\big |\{i\in [m] : c_i=1\}\big |-1$, {\em (ii)} $c_{j} \in \{1,2\}$ for each $j\in [m+n]\setminus [m]$, {\em (iii)} $c_i\in\{1,2, c_{m+1}+\cdots +c_{m+n}\}$ for each $i\in [m]$ and {\em (iv)} the inequality
    \begin{eqnarray*}
    &&2\big |\{i\in [m] :c_i=1 \ {\rm or} \ c_i = c_{m+1}+\cdots +c_{m+n}\}\big |\\
    & \geq &3m-2(c_{m+1}+\cdots +c_{m+n})+1  
    \end{eqnarray*}   
    holds.
\end{itemize}  
\end{Lemma}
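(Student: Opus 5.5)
We follow the template of Lemma \ref{compbips3}, working throughout with the description of $\Pc:=\conv(\Dc(K_{m,n},\mathfrak{c}))$ from Lemma \ref{Osaka}. Write $C:=c_{m+1}+\cdots+c_{m+n}$. The hypothesis $s=2$ means three things: $m\le 2C-1$, every $c_i\ge 1$ (so $\lfloor (5-c_i)/2\rfloor\le 2$), and $2(\Pc\setminus\partial\Pc)\cap\ZZ^{m+n}\neq\emptyset$. The interior lattice points of $2\Pc$ are exactly the $\ab$ satisfying
\[
1\le a_i\le 2c_i-1 \text{ for } i\notin A,\qquad a_i\ge 1 \text{ for } i\in A,\qquad a_1+\cdots+a_m\le 2C-1 .
\]
In particular $x_1\cdots x_{m+n}t^2\in\omega_{K[\Pc]}$.

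\textbf{``Only if.''} Pick $\bb\in\Pc\cap\ZZ^{m+n}$ with $b_1+\cdots+b_m=C$ and factor $\xb^{\bb}t=(\xb^{\bb'}t^p)(\xb^{\bb''}t^q)$ with the first factor in $\omega_{K[\Pc]}$ and the second in $\omega_{K[\Pc]}^{-1}$. Multiplying the second factor by $x_1\cdots x_{m+n}t^2$ gives $q\ge -2$ and $b_i''\ge -1$ for all $i$. Since $p\ge 2$ and $p+q=1$, we have $(p,q)\in\{(2,-1),(3,-2)\}$.

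We then test $\xb^{\bb''}t^q$ against interior points of the form $x_j^{Nc_j-2}(x_1\cdots x_{m+n})t^N$, as in Lemma \ref{compbips3}; each time the resulting product must lie in $K[\Pc]$.
\begin{itemize}
\item For $q=-1$ this forces $2c_j-2\le c_j$ for each $j\in[m+n]\setminus[m]$, i.e.\ $c_j\in\{1,2\}$. For $i\in B$ it forces $c_i\le 2$.
\item For $q=-2$, testing with $N=2$ and $N=3$ gives the same restrictions and, in addition, shows $b''_i=-1$ whenever $c_i=1$.
\item Comparing coordinate sums with the constraint $\sum_{i\le m}\le (N-1)C$ then pins $m$ down: roughly, $q=-2$ leads to $m=2C-1$ (cases $(\alpha)$ and $(\beta)$), while $q=-1$ leads to $m=C-1$ (case $(\gamma)$) or to the inequality in $(\delta)$(i).
\end{itemize}
The counting conditions ($\delta$)(iv) and ($\alpha$)(iii) come, as in the last paragraph of Lemma \ref{compbips3}, from choosing $\bb$ adversarially. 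Concentrate $\bb$ on coordinates with $c_i=2$, which have no slack, or on those with $c_i=1$, which carry $b_i''=-1$. Then bound $b_1'+\cdots+b_m'$ from below and compare with the requirement $b_1'+\cdots+b_m'\le 2C-1$ coming from $\xb^{\bb'}t^2\in\omega_{K[\Pc]}$.

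\textbf{``If.''} In each of $(\alpha)$--$(\delta)$ we show $\xb^{\ab}t\in\omega_{K[\Pc]}\cdot\omega_{K[\Pc]}^{-1}$ for every $\ab\in\Pc\cap\ZZ^{m+n}$. We induct on $C-(a_1+\cdots+a_m)$, reusing verbatim the induction step of Lemma \ref{compbips3}, which moves a variable $x_i$ between the two factors. This reduces everything to the base case $a_1+\cdots+a_m=C$.

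In the base case we construct an explicit $\ab'$ with $\xb^{\ab'}t^{-1}$ (or $t^{-2}$) in $\omega_{K[\Pc]}^{-1}$ and $\xb^{\ab-\ab'}t^{2}$ (or $t^3$) in $\omega_{K[\Pc]}$. The choice of $\ab'$ is:
\begin{itemize}
\item set $a'_i=-1$ on the coordinates where $\ab$ must be raised to become interior, namely where $a_i=0$, or where $c_i=1$, or where $c_i=2$ with $a_i\le 1$;
\item choose $a'_i$ with $-1\le a_i'\le a_i-1$ elsewhere, so that $\sum_{i\le m}(a_i-a'_i)$ equals exactly $2C-1$ (respectively $3C-1$).
\end{itemize}
Membership of $\xb^{\ab'}t^{q}$ in $\omega^{-1}$ must be verified against all interior points. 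Here we use that $-\ab'$ has entries at most $1$ and that the sum constraint absorbs the surplus precisely because of the equality or inequality on $m$ in each case.

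The main obstacle is the exhaustive case analysis in the ``only if'' direction. One has to show that every combination of values $c_i$ not covered by $(\alpha)$--$(\delta)$ admits a point $\bb$ and a test element of $\omega_{K[\Pc]}$ giving a contradiction. Separating the $q=-1$ and $q=-2$ regimes cleanly, and deriving the sharp bound in ($\delta$)(iv), is where I expect most of the work. I would handle it by fixing $q$ first and then counting how much of the budget $2C-1$ is consumed by the forced entries $b_i'\ge b_i+1$.
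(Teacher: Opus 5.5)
Your outline follows the paper's template: the top-face base case plus induction on $C-(a_1+\cdots+a_m)$ for ``if''; factoring $\xb^{\bb}t$, bounding $q$, test elements and an adversarial $\bb$ for ``only if''. However, the concrete choices you commit to fail, and the decisive verification is not done. In the ``if'' base case, putting $a_i'=-1$ on every coordinate with $c_i=1$ breaks the $t^2$-factorization whenever $a_i=1$. Then $a_i-a_i'=2>2c_i-1$, so $\xb^{\ab-\ab'}t^2\notin\omega_{K[\Pc]}$. The paper instead uses $a_i'=0$ on $\{i\in[m]: a_i=c_i=1\}$ and $a_j'=\min\{1-c_j,a_j-1\}$ for $j>m$. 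Your criterion for $\xb^{\ab'}t^{-1}\in\omega_{K[\Pc]}^{-1}$ is also too weak. Pair it with $x_1\cdots x_{m+n}t^2$, with $x_i^{Nc_i-2}x_1\cdots x_{m+n}t^N$ for $N\gg 0$, and with an interior point $\db$ of $N\Pc$ satisfying $d_1+\cdots+d_m=NC-1$. This shows one needs $-1\le a_i'\le 1-c_i$ for every $i\in[m+n]\setminus A$, and $a_1'+\cdots+a_m'\le 1-C$. Hence $a_i'$ is forced outside $A$: $a_i'=a_i-1$ if $c_i=1$, and $a_i'=-1$ if $c_i=2$, even when $a_i=2$. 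The only freedom lies on $A$. Whether $a_1'+\cdots+a_m'=1-C$ can be reached there for \emph{every} $\ab$ with $a_1+\cdots+a_m=C$ is exactly what $(\delta)$(i),(iv) encode (respectively $m=2C-1$ with no $c_i=2$ in $(\beta)$, and $m=C-1$ in $(\gamma)$). Your phrase ``the sum constraint absorbs the surplus'' is the entire content of this direction, and it is not checked.

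In the ``only if'' direction the case analysis is deferred, and the heuristics you give for it have the roles reversed. Note first that $q=-2$ forces $\bb''=-(1,\ldots,1)$, so $\omega_{K[\Pc]}=(x_1\cdots x_{m+n}t^2)$ is principal; this yields only $(\alpha)$. In the remaining regime $q=-1$, it is the coordinates $i\notin A$ with $c_i=2$ that carry $b_i''=-1$, since $b_i''\le 1-c_i$. For $c_i=1$, by contrast, $b_i'=1$ is forced, i.e.\ $b_i''=b_i-1$. Concentrating $\bb$ on coordinates with $c_i=2$, $b_i=2$ (so $b_i'=3$) and using $b_1'+\cdots+b_m'\le 2C-1$ gives $(\delta)$(iv) and the paper's Claim excluding $c_i=2$ when $m=2C-1$; together with $(\delta)$(i) it also gives the $(\gamma)$ subcases. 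Concentrating $\bb$ on coordinates with $c_i=1$, $b_i=1$ instead makes $b_i''=0$ there, so $b_1'+\cdots+b_m'$ is \emph{small}. The contradiction must then come from the $\omega^{-1}$ side, $b_1''+\cdots+b_m''\le 1-C$. This yields $(\delta)$(i) when $|\{i:c_i=1\}|\le C$, and $m=2C-1$ when $|\{i:c_i=1\}|>C$. In particular, your forecast that $q=-1$ leads to $m=C-1$ or to $(\delta)$(i) is false. Take $m=5$, $n=2$, $\cc=(1,1,1,1,1,1,2)$. Then $(\beta)$ holds, and $\Pc$ is nearly Gorenstein but not Gorenstein ($2\Pc$ has three interior lattice points), so every factorization has $q=-1$. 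Yet neither $m=C-1$ nor $(\delta)$(i) holds, so this $(\beta)$ branch has to be tracked separately.
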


\begin{proof}
Set $\Pc:=\conv(\Dc(G,\mathfrak{c}))$ and and recall Lemma \ref{Osaka}.  It follows from the definition of $s$ together with $s=2$ that$$2=\min\{N\geq 1 : N(\Pc\setminus\partial\Pc)\cap \ZZ^{m+n}\neq \emptyset\}.$$

\medskip

\noindent
{\bf (``If'')}
If ($\alpha$) holds, then \cite[Theorem 5.3]{HSF_5} says that $\Pc$ is Gorenstein. Supposing that one of the conditions ($\beta$), ($\gamma$) and ($\delta$) hold, we show that $\Pc$ is nearly Gorenstein. 
Let $\ab=(a_1, \ldots, a_{m+n})$ belong to $\Pc\cap\ZZ^{m+n}$ and set $k_i:=\min\{a_i+1, 2c_i-1\}$ for each $i\in [m+n]\setminus [m]$.  If ($\beta$) holds, then $$\xb^{\ab}t=\big(x_1\cdots x_m x_{m+1}^{k_{m+1}}\cdots x_{m+n}^{k_{m+n}}t^2\big)\bigg(\frac{\xb^{\ab}}{x_1\cdots x_m x_{m+1}^{k_{m+1}}\cdots x_{m+n}^{k_{m+n}}t}\bigg)$$belongs to $\omega_{K[\Pc]}.\omega_{K[\Pc]}^{-1}$.
If ($\gamma$) holds, then $$\xb^{\ab}t=\big(x_1^{a_1+1}\cdots x_m^{a_m+1} x_{m+1}^{k_{m+1}}\cdots x_{m+n}^{k_{m+n}}t^2\big)\bigg(\frac{1}{x_1\cdots x_m x_{m+1}^{k_{m+1}-a_{m+1}}\cdots x_{m+n}^{k_{m+n}-a_{m+n}}t}\bigg)$$belongs to $\omega_{K[\Pc]}.\omega_{K[\Pc]}^{-1}$.

Let ($\delta$) hold. Using induction on$$(c_{m+1}+\cdots c_{m+1})-(a_1+\cdots +a_m),$$we show that $\xb^\ab t\in \omega_{K[\Pc]}.\omega_{K[\Pc]}^{-1}$.

Let $a_1+\cdots +a_m=c_{m+1}+\cdots +c_{m+n}$. If $c_{m+1}+\cdots + c_{m+n}=1$, then $n=1$ and $c_{m+1}=1$. Since $c_i\leq c_{m+1}+\cdots +c_{m+n}$ for each $i\in [m]$, we deduce that $c_1=\cdots =c_m=1$. Furthermore, it follows from $c_1+\cdots +c_m > c_{m+1}+\cdots +c_{m+n}$ that $m\geq 2$. This contradicts $s=2$. So, $c_{m+1}+\cdots + c_{m+n}\geq 2$. If $c_{m+1}+\cdots + c_{m+n}=2$, then we conclude from $s=2$ that $m\leq 3$. As $c_i\leq c_{m+1}+\cdots +c_{m+n}$ for each $i\in [m]$, we deduce that $c_1, \ldots, c_m$ are at most two. In these cases, one can easily prove that $\Pc$ is nearly Gorenstein.  Let $c_{m+1}+\cdots +c_{m+n}\geq 3$. Set $$X_1:=\{i\in [m] : a_i=c_i=1\}, \quad X_2:=\{i\in [m] : a_i=c_i=2\},$$and$$X_3=\{i\in [m] : a_i=1, c_i=2\}.$$Then, by (iv), one has 
\begin{align*}
2|X_2|+2|X_3| \, \leq &\,\, 2|\{i\in m: c_i=2\}|\leq2 m-\big(3m-2(c_{m+1}+\cdots +c_{m+n})+1\big)\\ =& \,\,2(c_{m+1}+\cdots +c_{m+n})-m-1.    
\end{align*}
This yields that

\begin{eqnarray*}
 &&\sum_{i\in [m]\setminus (X_1\sqcup X_2\sqcup X_3)}a_i-2(c_{m+1}+\cdots +c_{m+n})+1\\ &=&\sum_{i=1}^ma_i-|X_1|-2|X_2|-|X_3|-2(c_{m+1}+\cdots +c_{m+n})+1\\ &\leq& \sum_{i=1}^ma_i-|X_1|-4|X_2|-3|X_3|-m=\sum_{i=1}^m(a_i-1)-|X_1|-4|X_2|-3|X_3|\\  &=& \sum_{i\in [m]\setminus (X_1\sqcup X_2\sqcup X_3)}(a_i-1)-|X_1|-3|X_2|-3|X_3|\\ &\leq& \sum_{i\in [m]\setminus (X_1\sqcup X_2\sqcup X_3)}(a_i-1)-|X_1|-3|X_2|-2|X_3|.   
\end{eqnarray*}
Using the above inequality and (i) together with $a_1+\cdots +a_m=c_{m+1}+\cdots +c_{m+n}$, one has
\begin{eqnarray*}
&& |X_1\sqcup X_2\sqcup X_3|-m
\\ &\leq &\sum_{i\in [m]\setminus (X_1\sqcup X_2\sqcup X_3)}a_i-2(c_{m+1}+\cdots +c_{m+n})+1+|X_1|+3|X_2|+2|X_3|\\ & \leq &\sum_{i\in [m]\setminus (X_1\sqcup X_2\sqcup X_3)}^m(a_i-1).   
\end{eqnarray*}
These inequalities imply that for each $i\in [m]\setminus (X_1\sqcup X_2\sqcup X_3)$, there is an integer $-1\leq a_i'\leq a_i-1$ for which$$\sum_{i\in [m]\setminus (X_1\sqcup X_2\sqcup X_3)}a_i'=\sum_{i\in [m]\setminus (X_1\sqcup X_2\sqcup X_3)}a_i-2(c_{m+1}+\cdots +c_{m+n})+1+|X_1|+3|X_2|+2|X_3|.$$For each $i\in X_2\cup X_3$, set $a_i':=-1$. Put $a_i':=0$, for each $i\in X_1$. Also, set $a_i'=\min\{1-c_i,a_i-1\}$, for each $i\in [m+n]\setminus [m]$.  Define $\ab':=(a_1', \ldots, a_{m+n}')$. Notice that $$\xb^\ab t=(\xb^{\ab-\ab'}t^2)(\xb^{\ab'}t^{-1}).$$By the choice of $\ab'$, one has $\xb^{\ab'}t^{-1}\in \omega_{K[\Pc]}^{-1}$. We show that $\xb^{\ab-\ab'}t^2\in \omega_{K[\Pc]}$. Clearly, each component of $\ab-\ab'$ is positive. If $c_i=1$ for some $i\in [m+n]$, then $0\leq a_i\leq 1$, and so, $a_i-a_i'\leq 1$. Similarly, if $c_i=2$ for some $i\in [m+n]$, then $a_i-a_i'\leq 3$. Furthermore,
\begin{eqnarray*}
& &(a_1-a_1')+\cdots +(a_m-a_m')\\&=&(a_1+\cdots +a_m)-(a_1'+\cdots +a_m')\\ & =&(c_{m+1}+\cdots + c_{m+n})\\ &=&-\Big(\sum_{i\in [m]\setminus (X_1\sqcup X_2\sqcup X_3)}a_i-2(c_{m+1}+\cdots +c_{m+n})+1+|X_1|+2|X_2|+|X_3|\Big)\\ &=&2(c_{m+1}+\cdots +c_{m+n})-1,  
\end{eqnarray*}
where the last inequality follows from $a_1+\cdots +a_m=c_{m+1}+\cdots +c_{m+n}$. Thus, $\xb^{\ab-\ab'}t^2\in \omega_{K[\Pc]}$. This implies that $\xb^\ab t\in \omega_{K[\Pc]}.\omega_{K[\Pc]}^{-1}$, as required.

Now, assume that $a_1+\cdots +a_m < c_{m+1}+\cdots +c_{m+n}$. Then there is $i\in [m]$ with $x_i\xb^{\ab}t\in K[\Pc]$. Using induction hypothesis, it follows that there are $\xb^{\ab_1}t^{\ell_1}\in \omega_{K[\Pc]}$ and $\xb^{\ab_2}t^{\ell_2}\in \omega_{K[\Pc]}^{-1}$ for which$$x_i\xb^{\ab}t=(\xb^{\ab_1}t^{\ell_1})(\xb^{\ab_2}t^{\ell_2}).$$If the $i$th component of $\ab_2$ is nonnegative, then $\xb^{\ab_2}t^{\ell_2}/x_i\in \omega_{K[\Pc]}^{-1}$. In this case$$\xb^{\ab}t=(\xb^{\ab_1}t^{\ell_1})(\xb^{\ab_2}t^{\ell_2}/x_i)\in \omega_{K[\Pc]}.\omega_{K[\Pc]}^{-1},$$and we are done. Let the $i$th component of $\ab_2$ be negative. It then follows from $x_i\xb^{\ab}=\xb^{\ab_1}\xb^{\ab_2}$ that the $i$th component of $\ab_1$ is at least two. Then $(\xb^{\ab_1}t^{\ell_1})/x_i\in \omega_{K[\Pc]}$ and a similar argument as above yields that $\xb^{\ab}t\in \omega_{K[\Pc]}.\omega_{K[\Pc]}^{-1}$.

\medskip

\noindent
{\bf (``Only If'')}
Supposing that $\Pc$ is nearly Gorenstein, we show that one of the conditions ($\alpha$), ($\beta$), ($\gamma$) and ($\delta$) holds. If $|2(P\setminus \partial\Pc)\cap\ZZ^{m+n}|=1$. then $\Pc$ is pseudo-Gorenstein. In this case, we deduce from \cite{Miyashita} that $\Pc$ is Gorenstein. Hence, it follows from \cite[Theorem 5.3]{HSF_5} that ($\alpha$) holds.  Let $|2(\Pc\setminus \partial\Pc)\cap\ZZ^{m+n}|\geq 2$.

Let $\bb=(b_1, \ldots, b_{m+n})\in \Pc\cap \ZZ^{m+n}$ satisfy$$b_1+\cdots +b_m=c_{m+1}+\cdots + c_{m+n}.$$Since $$c_1+\cdots +c_m>c_{m+1}+\cdots c_{m+n},$$such a point $\bb$ exists. As $\Pc$ is nearly Gorenstein, one has $\xb^{\bb}t\in \omega_{K[\Pc]}.\omega_{K[\Pc]}^{-1}$. So, there are $\xb^{\bb'}t^p\in \omega_{K[\Pc]}$ and $\xb^{\bb''}t^{q}\in\omega_{K[\Pc]}^{-1}$ for which$$\xb^{\bb}t=(\xb^{\bb'}t^{p})(\xb^{\bb''}t^{q})=\xb^{\bb'+\bb''}t^{p+q}.$$Therefore, $\bb'+\bb''=\bb$ and $p+q=1$. Since $x_1\cdots x_{m+n}t^2\in \omega_{K[\Pc]}$ and $\xb^{\bb''}t^{q}\in \omega_{K[\Pc]}^{-1}$, one has
\begin{eqnarray}
\label{eeeeeeeee}   
(x_1\cdots x_{m+n}t^2)(\xb^{\bb''}t^{q})\in K[\Pc].
\end{eqnarray}
In particular, $q\geq -2$. Let $q=-2$. It follows from (\ref{eeeeeeeee}) that $\xb^{\bb''}=x_1^{-1}\cdots x_{m+n}^{-1}$. We deduce from $|2(\Pc\setminus\partial\Pc)|\geq 2$ that $x_i(x_1, \cdots x_{m+n})t^2\in \omega_{K[\Pc]}$ for some $i\in [m+n]$. It then follows from $\xb^{\bb''}t^{q}\in\omega_{K[\Pc]}^{-1}$ that
\begin{align*}
& x_i=x_i(x_1\cdots x_{m+n}t^2)(x_1^{-1}\cdots x_{m+n}^{-1}t^{-2})=x_i(x_1x_2\cdots x_{m+n}t^2)(\xb^{\bb''}t^{q})\in K[\Pc],
\end{align*}
a contradiction. Thus $q\geq -1$. Since $p\geq 2$ and $p+q=1$, we conclude that $(p,q)=(2,-1)$. The same argument as in the proof of Lemma \ref{compbips3} yields that each of $c_{m+1}, \ldots, c_{m+n}$ belongs to $\{1,2\}$ and$$c_j\in\{1,2, c_{m+1}+\cdots +c_{m+n}\}$$ for each $j\in [m]$.

If $c_{m+1}+\cdots + c_{m+n}=1$, then $n=1$ and $c_{m+1}=1$. Since $c_i\leq c_{m+1}+\cdots +c_{m+n}$ for each $i\in [m]$, we deduce that $c_1=\cdots =c_m=1$. Furthermore, it follows from $c_1+\cdots +c_m > c_{m+1}+\cdots +c_{m+n}$ that $m\geq 2$. This contradicts $s=2$. Thus $c_{m+1}+\cdots + c_{m+n}\geq 2$. If $c_{m+1}+\cdots + c_{m+n}=2$, then we conclude from $s=2$ that $m\leq 3$. So, ($\beta$) or ($\delta$) holds. Let $c_{m+1}+\cdots + c_{m+n}\geq 3$.

Set $\bb':=(b_1', \ldots, b_{m+n}')$ and $\bb'':=(b_1'', \ldots, b_{m+n}'')$. Assume that there is $i\in [m]$ with $c_i=2$. The inequality $c_{m+1}+\cdots + c_{m+n}\geq 3$ implies that $x_i^6(x_1\cdots x_{m+n})t^4$ belongs to $\omega_{K[\Pc]}$. Since $\xb^{\bb''}t^q=\xb^{\bb''}t^{-1}\in\omega_{K[\Pc]}^{-1}$, it follows that$$\big(x_i^6(x_1\cdots x_{m+n})t^4\big)(\xb^{\bb''}t^{-1})\in K[\Pc].$$Since the exponent of $x_i$ in the above product is at most $6$, we conclude that $b_i''\leq -1$. On the other hand, (\ref{eeeeeeeee}) yields that $b_i''\geq -1$. Hence $b_i''=-1$.

As in the proof of Lemma \ref{compbips3}, everything we proved so far is valid for each $\bb=(b_1, \ldots, b_{m+n})\in \Pc\cap \ZZ^{m+n}$ with$$b_1+\cdots +b_m=c_{m+1}+\cdots +c_{m+n}.$$In what follows, we choose an appropriate $\bb\in \Pc\cap \ZZ^n$ satisfying the above equality.

\medskip

\noindent
{\bf Claim.} If $m=2(c_{m+1}+\cdots +c_{m+n})-1$, then $c_i\neq 2$, for each $i\in [m]$.

{\it Proof of the claim.} Assume that there is some $i\in [m]$ with $c_i=2$. Without loss of generality, we may assume that $i=1$. Since $c_{m+1}+\cdots +c_{m+n}\geq 3$, there is $\bb=(b_1, \ldots, b_{m+n})\in \Pc\cap \ZZ^{m+n}$ for which $b_1=2$ and$$b_1+\cdots +b_m=c_{m+1}+\cdots +c_{m+n}.$$As $b_1''=-1$, it follows from $b_1''+b_1'=b_1=2$ that $b_1'=3$. Thus,$$b_1'+\cdots +b_m'\geq 3+(m-1)=m+2=2(c_{m+1}+\cdots +c_{m+n})+1,$$contradicting $\xb^{\bb'}t^2\in\omega_{K[\Pc]}$. This proves the claim.

\medskip

To complete the proof of the lemma, we consider the following cases.

\medskip

\noindent
{\bf (Case 1)} Let $\big | \{i\in [m] : c_i=1\}\big |\geq c_{m+1}+\cdots +c_{m+n}+1$. Set $\ell:=c_{m+1}+\cdots + c_{m+n}$. We may assume that $c_1=\cdots =c_{\ell+1}=1$. Let $\bb=(b_1, \ldots, b_{m+n})\in \Pc\cap \ZZ^{m+n}$ be the vector defined by $b_1=\cdots =b_{\ell}=1$ and $b_i=0$ for each $i\in [m+n]\setminus [\ell]$. Since $b_j'=1$, for each $j\in [\ell]$, we deduce that $b_j''=0$ for each $j\in [\ell]$. As $b_i''\geq -1$, for each $i\in [m+n]\setminus [\ell]$, one has $$b_1''+\cdots +b_m''\geq \ell-m.$$It follows from$$c_1+\cdots +c_m> (c_{m+1}+\cdots + c_{m+n})=\ell$$that there exists an integer $N\geq 2$ with $$(Nc_1-1)+\cdots + (Nc_m-1)\geq N\ell-1.$$Therefore, there is $\db=(d_1, \cdots, d_{m+n})\in N(\Pc\setminus\partial\Pc) \cap \ZZ^{m+n}$ with$$d_1+\cdots + d_m=N\ell-1.$$Since $\xb^{\bb''}t^{-1}\in \omega_{K[\Pc]}^{-1}$, one has $(\xb^{\bb''}t^{-1})(\xb^{\db}t^N)\in K[\Pc]$. However,
\begin{eqnarray*}
& & (b_1''+d_1)+\cdots +(b_m''+d_m)
\\ & = & (b_1''+\cdots +b_m'')+(d_1+\cdots +d_m)
\\  & \geq & \ell-m+N\ell-1\\
& =& (N+1)\ell-m-1.   
\end{eqnarray*}
Consequently, $(N+1)\ell-m-1\leq (N-1)\ell$. This yields that $m\geq 2\ell-1$. Since $s=2$, we deduce that $m=2\ell-1$. Hence the claim shows that ($\beta$) holds. 

\medskip

\noindent
{\bf (Case 2)} Let $\big | \{i\in [m] : c_i=1\}\big |\leq c_{m+1}+\cdots +c_{m+n}$. As in Case 1, set $\ell:=c_{m+1}+\cdots +c_{m+n}$.  Suppose that$$m\leq \ell+\big |\{i\in [m]: c_i=1\}\big |-2.$$Since $c_1 +\cdots +c_m> \ell$ and 
$\big | \{i\in [m] : c_i=1\}\big |\leq \ell$, there is a monomial $u$ on the variables $x_1, \ldots, x_m$ with  $ut\in K[\Pc]$ and $\deg(u)=\ell$ for which $\deg_{x_i}(u)=1$ for each $i\in [m]$ with $c_i=1$. Let $\bb=(b_1, \ldots, b_{m+n})$ denote the exponent vector of $u$. In particular, $\bb\in \Pc \cap\ZZ^{m+n}$ and $b_1+\cdots +b_m=\ell$. For each $i\in [m]$ with $c_i=1$, one has $b_i'=1$. Hence, the equality $\bb'+\bb''=\bb$ implies that $b_i''=0$. Since $b_j''\geq -1$ for each $j\in [m]$, we conclude that
\begin{align*}
& b_1''+\cdots + b_m''\geq -m+\big |\{i\in [m] : c_i=1\}\big |\geq 2-\ell.
\end{align*}
As in Case 1, there exists an integer $N\geq 2$ and $\db\in N(\Pc\setminus \partial\Pc) \cap \ZZ^{m+n}$ with$$d_1+\cdots +d_m=N\ell-1.$$It follows from $\xb^{\bb''}t^{-1}\in\omega_{K[\Pc]}^{-1}$ that$$\xb^{\db+\bb''}t^{N-1}=(\xb^{\db}t^N)(\xb^{\bb''}t^{-1})\in K[\Pc].$$However,
\begin{eqnarray*}
& & (d_1+b_1'')+\cdots + (d_m+b_m'')\\
& = &(d_1+\cdots +d_m)+(b_1''+\cdots +b_m'')\\ & \geq &N\ell-1+2-\ell=(N-1)\ell+1,
\end{eqnarray*}
a contradiction. Thus
\begin{eqnarray}
\label{eeeeeeeeeeeeeee}   
m\geq\ell+\big |\{i\in [m]: c_i=1\}\big |-1.
\end{eqnarray}
If$$2\big |\{i\in [m] :c_i=1 \ {\rm or} \ c_i=c_{m+1}+\cdots +c_{m+n}\}\big |\geq 3m-2\ell+1,$$then ($\delta$) holds. Suppose that$$2\big |\{i\in [m] :c_i=1 \ {\rm or} \ c_i=c_{m+1}+\cdots +c_{m+n}\}\big |\leq 3m-2\ell.$$The above inequality yields that$$2\big |\{i\in [m] :c_i=2\}\big |\geq 2m-(3m-2\ell)=2\ell-m.$$Set$$\ell':=\min\Big\{\Big\lfloor\frac{\ell}{2}\Big\rfloor, \big |\{i\in [m] :c_i=2\}\big |\Big\}.$$Since $\big |\{i\in [m] :c_i=2\}\big |\geq \ell'$, we may assume that $c_1=\cdots =c_{\ell'}=2$.

\smallskip

{\bf Subcase 2.1.} Suppose $\ell'= \big |\{i\in [m] :c_i=2\}\big |$. It follows from $\ell'\leq \lfloor\ell/2\rfloor$ and $c_1+\cdots +c_m >\ell$ that there is $\bb=(b_1, \cdots, b_{m+n})\in \Pc\cap \ZZ^{m+n}$ with $b_1=\cdots=b_{\ell'}=2$ and $b_1+\cdots +b_m=\ell$. As $b_i''=-1$ for each $i\in [\ell']$, we deduce from $\bb=\bb'+\bb''$ that $b_1'=\cdots =b_{\ell'}'=3$. Therefore,
\begin{eqnarray*}
b_1'+\cdots +b_m'& \geq &3\ell'+(m-\ell') = m+2\ell' \\ & =&m+ 2\big |\{i\in [m] :c_i=2\}\big |\\ & \geq &m+(2\ell-m)=2\ell,  
\end{eqnarray*}
contradicting $\xb^{\bb'}t^2\in \omega_{K[\Pc]}$.

\smallskip

{\bf Subcase 2.2.} Suppose that $\ell$ is even and $\ell'=\ell/2$. Let $\bb=(b_1, \cdots, b_{m+n})$ be the vector with $b_1=\cdots =b_{\ell'}=2$ and $b_i=0$ for each $i\in [m+n]\setminus [\ell']$. Then $\bb\in \Pc\cap \ZZ^{m+n}$ and $b_1+\cdots +b_m=\ell$. Since $b_i''=-1$ for each $i\in [\ell']$, we deduce from $\bb=\bb'+\bb''$ that $b_1'=\cdots =b_{\ell'}'=3$. Therefore,
\begin{eqnarray*}
b_1'+\cdots +b_m' &\geq & 3\ell'+(m-\ell')=m+2\ell'=m+\ell\\ & \geq & 2\ell+\big |\{i\in [m]: c_i=1\}\big |-1,  
\end{eqnarray*}
where the last inequality follows from inequality (\ref{eeeeeeeeeeeeeee}). We conclude from $\xb^{\bb'}t^2\in \omega_{K[\Pc]}$ that $b_1'+\cdots +b_m'\leq 2\ell-1$. Therefore, the above inequalities imply that $c_i\neq 1$ for each $i\in [m]$ and, in addition, $m=\ell-1$. In particular, ($\gamma$) holds.

\smallskip

{\bf Subcase 2.3.} Suppose that $\ell$ is odd and $\ell'=(\ell-1)/2$. Seeing Subcase 2.1, we may assume that$$\big |\{i\in [m] :c_i=2\}\big |\geq \ell'+1.$$Let $c_{\ell'+1}=2$ and let $\bb=(b_1, \cdots, b_{m+n})$ be the vector with $b_1=\cdots =b_{\ell'}=2$, $b_{\ell'+1}=1$ and $b_i=0$ for each $i\in [m+n]\setminus [\ell'+1]$. Then $\bb\in \Pc\cap \ZZ^{m+n}$ and $b_1+\cdots +b_m=\ell$. Since $b_i''=-1$ for each $i\in [\ell'+1]$, we deduce from $\bb=\bb'+\bb''$ that $b_1'=\cdots =b_{\ell'}'=3$ and $b_{\ell'}'=2$. Therefore,
\begin{align*}
& b_1'+\cdots +b_m'\geq 3\ell'+2+(m-\ell'-1)=m+2\ell'+1=m+\ell.
\end{align*}
Then the same argument as in Subcase 2.2 shows that ($\gamma$) holds.
\hspace{2cm}
\end{proof}

\begin{Lemma} \label{compbiplevs2}
Under the situation of Lemma \ref{compbips2}, if $\conv(\Dc(K_{m,n},\mathfrak{c}))$ is nearly Gorenstein, then it is level.   
\end{Lemma}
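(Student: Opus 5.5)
Since $\Pc:=\conv(\Dc(K_{m,n},\mathfrak{c}))$ is nearly Gorenstein with $s=2$, Lemma \ref{compbips2} says that one of ($\alpha$), ($\beta$), ($\gamma$), ($\delta$) holds, and I will treat each case separately. If ($\alpha$) holds, $\Pc$ is Gorenstein by \cite[Theorem 5.3]{HSF_5}, hence level. Put $\ell:=c_{m+1}+\cdots+c_{m+n}$. In the remaining cases Lemma \ref{Osaka} describes $\Pc$ by the inequalities $0\le x_j\le c_j$ ($j\in B\cup([m+n]\setminus[m])$), $0\le x_i$ ($i\in A$) and $x_1+\cdots+x_m\le \ell$. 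Here $c_j\in\{1,2\}$ for $j>m$, and $c_i\in\{1,2,\ell\}$ for $i\in[m]$.

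Following the proof of Lemma \ref{compbiplevs3}, fix $N\ge 3$ and $\ab\in N(\Pc\setminus\partial\Pc)\cap\ZZ^{m+n}$. The interior conditions are $1\le a_j\le Nc_j-1$ for bounded coordinates, $a_i\ge1$ for $i\in A$, and $a_1+\cdots+a_m\le N\ell-1$. The goal is to write $\ab=\ab'+\ab''$ with $\ab'\in 2(\Pc\setminus\partial\Pc)$ and $\ab''\in(N-2)\Pc$. The construction is greedy.
\begin{itemize}
\item For $j\notin A$, set $a_j':=\min\{a_j,2c_j-1\}$. Then $a_j''=a_j-a_j'\le (N-2)c_j$ automatically, since $a_j\le Nc_j-1$.
\item For $i\in A$ (where $c_i=\ell$), choose $1\le a_i'\le a_i$ so that either $a_1'+\cdots+a_m'=2\ell-1$ exactly, or else $a_i'=a_i$ for every $i\in A$ and the sum is still $<2\ell-1$.
\end{itemize}
This is possible as long as the forced minimum $\sum_{i\in[m]\setminus A}\min\{a_i,2c_i-1\}+|A|$ does not exceed $2\ell-1$. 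In the first alternative, $\sum_{i\le m}a_i''\le N\ell-1-(2\ell-1)=(N-2)\ell$, which is what we need. In the second alternative, $a_i''=0$ for $i\in A$, and it remains to bound $\sum_{i\in[m]\setminus A}(a_i-2c_i+1)^+$ by $(N-2)\ell$ using the cardinality constraints.

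The main obstacle is checking this feasibility, together with the second-alternative bound, from the numerical conditions in each case.
\begin{itemize}
\item In ($\beta$), $A\cup B$ contains only $c_i\in\{1,\ell\}$ and $m=2\ell-1$. Taking every $a_i'=1$ gives sum $m=2\ell-1$, so we are always in the first alternative.
\item In ($\gamma$), $m=\ell-1$ and $c_i\in\{2,\ell\}$, so the minimal sum is at most $3(\ell-1)$. This can exceed $2\ell-1$, so I expect we must instead take $a_i'=\min\{a_i,2\}$ or $1$ selectively. In the worst case all $c_i=2$, and then $\sum c_i=2\ell-2>\ell$ still allows $\ab''$ to satisfy $a_i''\le(N-2)\cdot2$ with total $\le (N-2)\ell$. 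Here one has to exploit that $a_1+\cdots+a_m\le N\ell-1$, by distributing the deficit.
\item In ($\delta$), the key inequality is (iv): $2|\{c_i\in\{1,\ell\}\}|\ge 3m-2\ell+1$. Equivalently, with $t:=|\{c_i=2\}|$, we get $m+2t\le 2\ell-1$. Hence choosing $a_i'\le 3$ for $c_i=2$ and $a_i'=1$ otherwise keeps the sum feasible whenever the $a_i$ with $c_i=2$ are large.
\end{itemize}

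When the second alternative occurs, the excess sum involves only indices with $c_i\in\{1,2\}$ and $a_i>2c_i-1$. Each contributes at most $(N-2)c_i$, and I would bound the total using $|\{c_i=1\}|\le m-\ell+1$ from (i) and $c_1+\cdots+c_m>\ell$, mirroring the final estimate in Lemma \ref{compbiplevs3}. The small cases $\ell=2$ (so $m\le3$) would be checked directly.
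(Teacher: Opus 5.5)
Your framework is the paper's. You split according to ($\alpha$)--($\delta$) of Lemma \ref{compbips2}, write $\ab=\ab'+\ab''$ with $\ab'\in 2(\Pc\setminus\partial\Pc)$ and $\ab''\in(N-2)\Pc$, and choose $\ab'$ so that either $a_1'+\cdots+a_m'=2\ell-1$ or every $a_i'$ is as large as allowed. Cases ($\alpha$) and ($\beta$) go through as you say. Your greedy choice also works in ($\delta$) for $\ell\ge 3$. Put $t=|\{i\in[m]:c_i=2\}|$ and $u=|\{i\in[m]:c_i=1\}|$. Condition (iv) gives $m+2t\le 2\ell-1$, so the greedy sum is admissible. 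In the unsaturated alternative the excess is at most $(N-2)(2t+u)$, and (i) with (iv) give $2t+u\le(2\ell-1-m)+(m-\ell+1)=\ell$. It is (iv), not $c_1+\cdots+c_m>\ell$, that is needed here.

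The genuine gap is case ($\gamma$). As you note, the greedy choice $a_i'=\min\{a_i,3\}$ on coordinates with $c_i=2$ can violate $a_1'+\cdots+a_m'\le 2\ell-1$. For example, all $c_i=2$ on $[m]$, all $a_i=4$, $\ell\ge 3$ and $N\ge 4$ give a sum of $3(\ell-1)>2\ell-1$. Your remedy, ``choose selectively / distribute the deficit'', is exactly the step that needs proof, and you do not supply it.

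Lowering $a_i'$ is constrained by $a_i''\le 2(N-2)$, i.e.\ $a_i'\ge a_i-2N+4$. This exceeds $1$ only on two sets:
\begin{itemize}
\item $X_1=\{i: c_i=2,\ a_i=2N-1\}$, which forces $a_i'=3$;
\item $X_2=\{i: c_i=2,\ a_i=2N-2\}$, which forces $a_i'\ge 2$.
\end{itemize}
You must show these forced values fit under $2\ell-1$. From $a_1+\cdots+a_m\le N\ell-1$, $a_i\ge 1$ elsewhere, and $m=\ell-1$, you get $(2N-2)|X_1|+(2N-3)|X_2|\le (N-1)\ell$. Hence $2|X_1|+|X_2|\le \ell$, so the forced minimum $m+2|X_1|+|X_2|$ is at most $2\ell-1$. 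Only then can $\ab'$ be taken maximal subject to these lower bounds.

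You still need the estimate in the unsaturated alternative. There only $X_3=\{i: c_i=2,\ a_i\ge 4\}$ contributes, each term at most $2N-4$. Since $m+2|X_3|\le a_1'+\cdots+a_m'\le 2\ell-2$, you get $|X_3|\le(\ell-1)/2$, hence $a_1''+\cdots+a_m''\le(N-2)\ell$. This counting is the content of the paper's treatment of ($\gamma$); without it, your proposal does not prove the lemma in that case.
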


\begin{proof}
We follow Lemma \ref{compbips2} and its proof. Supposing that $\ab=(a_1, \ldots, a_{m+n})$ belongs to $N(\Pc\setminus\partial\Pc)\cap\ZZ^{m+n}$ for some integer $N\geq 3$, we show that there are$$\ab'=(a_1', \ldots, a_{m+n}')\in 2(\Pc\setminus\partial\Pc)\cap\ZZ^{m+n}$$ and$$\ab''=(a_1'', \ldots, a_{m+n}'')\in (N-2)\Pc$$for which $\ab=\ab'+\ab''$. According to Lemma \ref{compbips3} the following cases arise.

\medskip

\noindent
{\bf (Case 1)} Suppose that Lemma \ref{compbips2} ($\alpha$) holds. Then $\Pc$ is Gorenstein (\cite[Theorem 5.3]{HSF_5}). So, it is level.

\medskip

\noindent
{\bf (Case 2)} Suppose that Lemma \ref{compbips2} ($\beta$) holds. Let $a_1'=\cdots=a_{m'}=1$ and $a_i'=\min\{2c_i-1,a_i\}$ for each $i\in [m+n]\setminus [m]$. Furthermore, set $a_i'':=a_i-a_i'$ for each $i\in [m+n]$.

\medskip

\noindent
{\bf (Case 3)} Suppose that Lemma \ref{compbips2} ($\gamma$) holds. It follows from ($\gamma$) (i) that $c_{m+1}+\cdots +c_{m+n}\geq 2$. If $c_{m+1}+\cdots +c_{m+n}=2$, then $m=1$. This contradicts the inequalities  $c_1+\cdots +c_m>c_{m+1}+\cdots +c_{m+n}$ and $c_1\leq c_{m+1}+\cdots +c_{m+n}$.  Let $c_{m+1}+\cdots +c_{m+n}\geq 3$. Set$$X_1:=\{i\in [m] : c_i=2, a_i=2N-1\}, \quad X_2:=\{i\in [m] : c_i=2, a_i=2N-2\}.$$Since $a_1+\cdots +a_m\leq N(c_{m+1}+\cdots +c_{m+n})-1$, it follows that
\begin{eqnarray*}
&& (2N-1)|X_1|+(2N-2)|X_2|+(m-|X_1|-|X_2|) \\ &\leq & N(c_{m+1}+\cdots +c_{m+n})-1.    \end{eqnarray*}
This inequality together with ($\gamma$) (i) yields that
\begin{eqnarray*}
&& (N-1)(2|X_1|+|X_2|)\leq (2N-2)|X_1|+(2N-3)|X_2|\\  & \leq &  (N-1)(c_{m+1}+\cdots +c_{m+n}).    
\end{eqnarray*}
Hence $2|X_1|+|X_2|\leq c_{m+1}+\cdots +c_{m+n}$. Together with ($\gamma$) (i), this inequality implies that
\begin{eqnarray*}
& & 3|X_1|+2|X_2|+(m-|X_1|-|X_2|) \\ & \leq & 2(c_{m+1}+\cdots +c_{m+n})-1.
\end{eqnarray*}
Following the above inequality, one may choose integers $a_1',\ldots, a_m'$ for which

$\bullet$ $1\leq a_i'\leq \min\{a_i, 2c_i-1\}$ for each $i\in [m]$;

$\bullet$ $a_i'=3$, for each $i\in X_1$;

$\bullet$ $2\leq a_i'\leq 3$, for each $i\in X_2$;

$\bullet$ $a_1'+\cdots +a_m'\leq 2(c_{m+1}+\cdots +c_{m+n})-1$; 

$\bullet$ $a_1'+\cdots +a_m'$ is as large as possible. In other words, at least one of the following conditions holds. 

\begin{itemize}
\item [(1)] $a_1'+\cdots +a_m'=2(c_{m+1}+\cdots +c_{m+n})-1$;
\item [(2)]  $a_i'=\min\{a_i, 2c_i-1\}$ for every integer $i\in [m]$.
\end{itemize}
\noindent
For each $i\in [m+n]\setminus [m]$, set $a_i':=\min\{2c_i-1,a_i\}$. Then $$\ab'=(a_1', \ldots, a_{m+n}')\in 2(\Pc\setminus\partial\Pc)\cap\ZZ^{m+n}.$$ 
For each $i\in [m+n]$, set $a_i'':=a_i-a_i'$ and put $\ab'':=(a_1'', \ldots, a_{m+n}'')$. To complete our proof, we show that $\ab''\in (N-2)\Pc$. We have

$\bullet$ $a_i''\geq 0$ for each $i\in [m+n]$; 

$\bullet$ $a_i''\leq (N-2)c_i$ for each $i=[m]$ with $c_i=2$;

$\bullet$ $a_i''\leq (N-2)c_i$ for each $i\in [m+n]\setminus [m]$.

\noindent
So, we only need to show that$$a_1''+\cdots +a_m''\leq (N-2)(c_{m+1}+\cdots +c_{m+n}).$$
If (1) holds, then
\begin{eqnarray*}
&& a_1''+\cdots +a_m''\\&=&(a_1+\cdots +a_m)-(a_1'+\cdots +a_m')\\  &\leq &N(c_{m+1}+\cdots +c_{m+n})-1-\big(2(c_{m+1}+\cdots +c_{m+n})-1\big)\\ &=&(N-2)(c_{m+1}+\cdots +c_{m+n}).   
\end{eqnarray*}
Assume that (1) does not hold. Suppose $a_i''\neq 0$ for some $i\in [m]$. Then $a_i'=2c_i-1$. Since (1) does not hold, we deduce that $c_i\neq c_{m+1}+\cdots +c_{m+n}$. In other words, $c_i=2$. Set$$X_3:=\{i\in [m]: c_i=2,  a_i\geq 4\}.$$ For each $i\in [m]$, one has $a_i''>0$ if and only if $i\in X_3$. Furthermore,
\begin{eqnarray*}
a_1''+\cdots +a_m''&=&\sum_{i\in X_3}a_i''=\sum_{i\in X_3}(a_i-3)\leq\sum_{i\in X_3}(Nc_i-4)\\  &=&\sum_{i\in X_3}(2N-4)=(2N-4)|X_3|.
\end{eqnarray*}
If $|X_3|\leq (c_{m+1}+\cdots +c_{m+n})/2$, then the above inequality implies that$$a_1''+\cdots +a_m''\leq (N-2)(c_{m+1}+\cdots +c_{m+n}),$$as required. Let $|X_3|> (c_{m+1}+\cdots +c_{m+n})/2$. Since $a_i'\geq 1$ for each $i\in [m]$, we deduce that
\begin{eqnarray*}
 a_1'+\cdots +a_m'&\geq &(m-|X_3|)+\sum_{i\in X_3}a_i'= m-|X_3|+3|X_3|=m+2|X_3|\\  &\geq &(c_{m+1}+\cdots +c_{m+n}-1)+2(c_{m+1}+\cdots +c_{m+n}+1)/2\\ & = &2(c_{m+1}+\cdots +c_{m+n}),
\end{eqnarray*}
a contradiction.

\medskip

\noindent
{\bf (Case 4)} Suppose that Lemma \ref{compbips2} ($\delta$) holds. If $c_{m+1}+\cdots +c_{m+n}\leq 2$, then one easily verifies that $\Pc$ is level. Let $c_{m+1}+\cdots +c_{m+n}\geq 3$.  Let $X_1$ and $X_2$ be the sets defined in Case 3. It follows from ($\delta$) (iv) that
\begin{eqnarray*}
2\big |\{i\in [m] :c_i=2\}\big | &\leq & 2m-3m+2(c_{m+1}+\cdots +c_{m+2})-1\\ & = & 2(c_{m+1}+\cdots +c_{m+n})-m-1.
\end{eqnarray*}
Hence, 
\begin{eqnarray*}
& &3|X_1|+2|X_2|+(m-|X_1|-|X_2|)\\ & =& 2|X_1|+|X_2|+m\\ & \leq & 2\big |\{i\in [m] :c_i=2\}\big |+m \leq 2(c_{m+1}+\cdots +c_{m+n})-1.
\end{eqnarray*}
As in the proof of Case 3, the above inequality implies that there are integers $a_1',\ldots, a_m'$ for which

$\bullet$ $1\leq a_i'\leq \min\{a_i, 2c_i-1\}$ for each $i\in [m]$;

$\bullet$ $a_i'=3$ for each $i\in X_1$;

$\bullet$ $2\leq a_i'\leq 3$ for each $i\in X_2$;

$\bullet$ $a_1'+\cdots +a_m'\leq 2(c_{m+1}+\cdots +c_{m+n})-1$; 

$\bullet$ At least one of the following conditions holds:

\begin{itemize}
\item [(1)] $a_1'+\cdots +a_m'=2(c_{m+1}+\cdots +c_{m+n})-1$;
\item [(2)]  $a_i'=\min\{a_i, 2c_i-1\}$, for every integer $i\in [m]$.
\end{itemize}

\noindent
For each $i\in [m+n]\setminus [m]$, set $a_i':=\min\{2c_i-1,a_i\}$. One has $$\ab'=(a_1', \ldots, a_{m+n}')\in 2(\Pc\setminus\partial\Pc)\cap\ZZ^{m+n}.$$ 
For each $i\in [m+n]$, set $a_i'':=a_i-a_i'$ and put $\ab'':=(a_1'', \ldots, a_{m+n}'')$. As in the proof of Case 3, it is enough to show that$$a_1''+\cdots +a_m''\leq (N-2)(c_{m+1}+\cdots +c_{m+n}).$$
If (1) holds, then the argument is similar to that of Case 3. Assume that (1) does not hold. Let $a_i''\neq 0$ for some $i\in [m]$. Then $a_i'=2c_i-1$. Since (1) does not hold, we deduce that $c_i\neq c_{m+1}+\cdots +c_{m+n}$. Thus $c_i\in\{1,2\}$. Set $X_3:=X_4\sqcup X_5$, where$$X_4:=\{i\in [m]: c_i=2,  a_i\geq 4\}, \quad X_5=\{i\in [m]: c_i=1, a_i\geq 2\}.$$ For each $i\in [m]$, one has $a_i''>0$ if and only if $i\in X_3$. Furthermore,
\begin{eqnarray*}
a_1''+\cdots +a_m''& =& \sum_{i\in X_3}a_i''=\sum_{i\in X_4}(a_i-3)+\sum_{i\in X_5}(a_i-1)\\ &\leq& \sum_{i\in X_4}(Nc_i-4)+\sum_{i\in X_5}(Nc_i-2)\\  &=& \sum_{i\in X_4}(2N-4)+\sum_{i\in X_5}(N-2)\\  &=& (2N-4)|X_4|+(N-2)|X_5|\\  &=& (N-2)(2|X_4|+|X_5|).
\end{eqnarray*}
If $2|X_4|+|X_5|\leq (c_{m+1}+\cdots +c_{m+n})$, then the above inequality implies that$$a_1''+\cdots +a_m''\leq (N-2)(c_{m+1}+\cdots +c_{m+n}).$$
If $2|X_4|+|X_5|> (c_{m+1}+\cdots +c_{m+n})$, then, since $a_i'\geq 1$ for each $i\in [m]$, one has
\begin{eqnarray*}
 &&a_1'+\cdots +a_m'
\\ &\geq &(m-|X_4|-|X_5|)+\sum_{i\in X_4}a_i'+\sum_{i\in X_5}a_i'
\\  &= &m-|X_4|-|X_5|+3|X_4|+|X_5|=m+2|X_4|
\\  &> &m+ (c_{m+1}+\cdots +c_{m+n})-|X_5|
\\ &\geq &2(c_{m+1}+\cdots +c_{m+n})-1,
\end{eqnarray*}
where the last inequality follows from ($\delta$) (i). This is a contradiction.
\, \, \, \, \, 
\end{proof}

\begin{Lemma} \label{compbips1} 
Set$$s:=\max\bigg\{\Big\lfloor\, \frac{m}{c_{m+1}+\cdots +c_{m+n}}\, \Big\rfloor+1, \, \max\Big\{\Big\lfloor\frac{5-c_i}{2}\Big\rfloor: i\in [m+n]\Big\}\bigg\}$$and $$A:=\{i\in [m] : c_i=c_{m+1}+\cdots +c_{m+n}\}.$$If $s=1$, then $\conv(\Dc(K_{m,n},\mathfrak{c}))$ is nearly Gorenstein if and only if one of the following conditions holds.

\begin{itemize}
    \item [($\alpha$)] {\em (i)} $m=2n-1$, {\em (ii)} $c_{m+1}=\cdots =c_{m+n}=2$ and {\em (iii)} $c_i\in \{2,2n\}$ for each $i\in [m]$;
    \item [($\beta$)] There do not exist disjoint subsets $X_1, X_2, X_3, X_4, X_5\subseteq [m]$ for which the following conditions hold:
    \begin{itemize}
        \item [(i)]
        \begin{align*}
        \, \, \, & 2m+\sum_{i\in X_1\sqcup X_2}c_i
        \\\leq \, \, \, & c_{m+1}+\cdots +c_{m+n}+2|X_1|+3|X_2|+|X_3|+2|X_4|+|X_5|;
        \end{align*}
        \item [(ii)]
        \begin{align*}
     \, \, \, &      2|X_1|+|X_2|+3|X_3|+2|X_4|+3|X_5|+\sum_{i\in [m]\setminus (X_3\sqcup X_4\sqcup X_5)}c_i\\  \geq \, \, \, & 2m +c_{m+1}+\cdots +c_{m+n};  
        \end{align*}
        \item [(iii)] $c_i\geq 3$ for each $i\in X_2\sqcup X_3$;
        \item [(iv)] $c_i=2$ for each $i\in X_5$;
        \item [(v)] $(X_1\sqcup X_2)\cap A=\emptyset$;
        \item [(vi)] $c_i\geq 4$ for each $i\in [m]\setminus(X_1\sqcup\cdots\sqcup X_5)$;
        \item [(vii)] Either $2|X_1|+|X_2|+|X_5|\geq m+2$ or the inequalities$$|X_3|+2|X_4|+|X_5|\geq m$$ and
        \begin{align*}
         & 2|X_1|+|X_2|+4|X_3|+4|X_4|+4|X_5|+4|A'|\\  \geq \, \, \, &  3m+(c_{m+1}+\cdots +c_{m+n})-\sum_{i\in [m]\setminus (X_3\cup X_4\cup X_5\cup A)}c_i   
        \end{align*}
        hold, where $A':=A\setminus (X_1\sqcup\cdots \sqcup X_5)$.            \end{itemize}
\end{itemize}  
\end{Lemma}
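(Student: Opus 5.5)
We will follow the pattern of Lemmas \ref{compbips3} and \ref{compbips2}. Set $\Pc:=\conv(\Dc(K_{m,n},\mathfrak{c}))$ and $\ell:=c_{m+1}+\cdots+c_{m+n}$, and use the description of $\Pc$ in Lemma \ref{Osaka}. The hypothesis $s=1$ means three things: $m\leq \ell-1$; every $c_i\geq 3$, so $\lfloor (5-c_i)/2\rfloor\leq 1$; and $\Pc$ has the interior lattice point $(1,\ldots,1)$. So $\omega_{K[\Pc]}$ has the generator $x_1\cdots x_{m+n}t$ in degree one. Consequently $\xb^{\ab}t\in\omega_{K[\Pc]}\cdot\omega_{K[\Pc]}^{-1}$ can only occur with a decomposition $(\xb^{\bb'}t)(\xb^{\bb''})$, where $\bb'\in(\Pc\setminus\partial\Pc)\cap\ZZ^{m+n}$ and $\xb^{\bb''}\in\omega_{K[\Pc]}^{-1}$ has $t$-degree $0$. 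Indeed, as in Lemma \ref{compbips3}, multiplying by $x_1\cdots x_{m+n}t$ forces $q\geq -1$, and $p\geq 1$ with $p+q=1$ gives $(p,q)=(1,0)$.

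The first step is to describe such Laurent monomials $\xb^{\bb''}$ explicitly. Multiplying by $x_1\cdots x_{m+n}t$ gives $b_i''\geq -1$ for all $i$. Testing against interior points in degree $N$ with the $i$th coordinate $Nc_i-1$ (or the $m$-block sum equal to $N\ell-1$) gives $b''_j\leq 0$ on the right block, together with a bound on $\sum_{i\in[m]}b_i''$ of the form $\sum b_i''\leq 0$ combined with the local constraints. Here we split off the Gorenstein case: if $\Pc$ is pseudo-Gorenstein, then \cite{Miyashita} together with \cite[Theorem 5.3]{HSF_5} yields ($\alpha$).

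For ``only if'' we argue contrapositively. If ($\alpha$) fails and disjoint sets $X_1,\ldots,X_5$ satisfying (i)--(vii) exist, we build a lattice point $\bb$ of $\Pc$ with $\sum_{i\le m} b_i=\ell$ that cannot be decomposed. The intended roles are as follows: $X_1$ and $X_2$ carry $b_i=c_i-2$ and $b_i=c_i-3$ (condition (i)), $X_3,X_4,X_5$ carry small values $3,2,3$ (condition (ii)), and the remaining coordinates carry $c_i$ or $1$. Any decomposition $\bb=\bb'+\bb''$ with $\bb'$ interior then forces $b'_i\leq c_i-1$ and $b'_i\geq 1$. Condition (vii) is exactly what makes the coordinatewise constraints on $\bb''$ incompatible with $\sum_{i\le m} b'_i\leq \ell-1$. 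The two alternatives in (vii) correspond to two test vectors in $\omega_{K[\Pc]}$ of degrees $1$ and $N$, as in Cases 1 and 2 of Lemma \ref{compbips2}.

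For ``if'' we argue by induction on $\ell-(a_1+\cdots+a_m)$, exactly as in Lemmas \ref{compbips3} and \ref{compbips2}. The step from $\xb^{\ab}$ to $x_i\xb^{\ab}$ is copied verbatim. For the base case $a_1+\cdots+a_m=\ell$, we must produce $\bb''$ with $-1\leq b_i''$ and $b''_i\geq a_i-c_i+1$ (so that $\bb'=\ab-\bb''$ is interior), with $\sum_{i\le m}(a_i-b_i'')\leq \ell-1$, and with $\xb^{\bb''}\in\omega^{-1}_{K[\Pc]}$. Grouping the indices according to how far $a_i$ is from $0$ and from $c_i$ recovers sets $X_1,\ldots,X_5$, and the failure to choose such a $\bb''$ is precisely the existence of sets satisfying (i)--(vii). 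The main obstacle is this bookkeeping: checking that the negation of ($\beta$) exactly matches the obstruction, in particular the role of $A'$ and of the indices with $c_i\geq 4$. We would first verify the correspondence on small cases, such as $n=1$ and $m\le 2$, before writing the general integer-interval argument.
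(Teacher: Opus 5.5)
Your outline has the paper's skeleton: reduce to $(p,q)=(1,0)$, induct on $\ell-\sum_{i\le m}a_i$ for ``if'', and build a bad point $\bb$ from $X_1,\dots,X_5$ for ``only if''. However, the concrete facts it rests on are wrong, and the real argument is deferred.

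\textbf{Errors in the setup.} First, $s=1$ means $m\le\ell-1$ and $c_i\ge 2$, not $c_i\ge 3$, since $\lfloor (5-2)/2\rfloor=1$. Conditions ($\alpha$)(ii), ($\beta$)(iv) and the example $m=n=2$, $\cc=(3,2,2,2)$ at the end of the paper are all about $c_i=2$. Second, the $t$-degree-$0$ part of $\omega_{K[\Pc]}^{-1}$ consists of the $\xb^{\bb''}$ with $b''_i\ge -1$ for all $i$, $b''_i\le 1$ for $i\notin A$, and $\sum_{i\le m}b''_i\le 1$. The tests you describe give $\le 1$, not $\le 0$, both on the right block and for the $m$-block sum. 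This is not cosmetic. If $\sum_{i\le m}a_i=\ell$, interiority of $\ab-\bb''$ forces $\sum_{i\le m}b''_i\ge 1$, so with your bound no such $\xb^{\ab}t$ would lie in $\omega_{K[\Pc]}\omega_{K[\Pc]}^{-1}$. In the example above, the point $(2,2,0,0)$ forces $\bb''=(0,1,-1,-1)$. The whole lemma is the question of when $\sum_{i\le m}b''_i=1$ can be met inside the coordinatewise intervals. Also, $q\ge -1$ and $p+q=1$ still leave $(p,q)=(2,-1)$ open. To exclude it you need a second interior point $x_jx_1\cdots x_{m+n}t$, which exists once $\Pc$ is not pseudo-Gorenstein.

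\textbf{The two directions.} Your test point for ``only if'' misreads (i) and (ii): it puts $b_i=3$ on $X_5$, where $c_i=2<\ell$, so it is not even in $\Pc$. Rewritten, (i) and (ii) say that $\ell$ lies between the least and largest values of $\sum_{i\le m}b_i$ when $b_i=c_i$ on $X_1$, $c_i-1$ on $X_2$, $1$ on $X_3\sqcup X_5$, $0$ on $X_4$, and $2\le b_i\le c_i-2$ on the remaining indices (possible by (vi)). The paper takes such a $\bb$ with sum $\ell$ and $\sum_{i\in A'}b_i$ minimal. The two alternatives of (vii) are then not two test vectors but a lower versus an upper bound on $\sum b''_i$. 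From $b'_i\le c_i-1$ one gets $b''_i\ge 1$ on $X_1$ and $b''_i\ge 0$ on $X_2\sqcup X_5$, so $\sum b''_i\ge 2|X_1|+|X_2|+|X_5|-m$; against $\sum b''_i\le 1$ this rules out the first alternative. Under the second alternative, $b'_i\ge 1$ gives $b''_i\le b_i-1$, so $b''_i=-1$ on $X_4$ and $b''_i\le 0$ on $X_3\sqcup X_5$, a set containing $A\setminus A'$ by (v). Together with $b''_i\le 1$ off $A$ this yields $\sum b''_i\le 0$, i.e.\ $\sum b'_i\ge\ell$. The split according to whether $b_i=2$ on all of $A'$ is where the minimality is used. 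For ``if'', your grouping is the right idea, but it is the entire proof, not bookkeeping to be checked on small cases. First dispose directly of an $i\in A$ with $a_i\in\{c_i,c_i-1\}$. Then the sets $X_1=\{a_i=c_i\}$, $X_2=\{a_i=c_i-1,\ c_i\ge 3\}$, $X_3=\{a_i=1,\ c_i\ge 3\}$, $X_4=\{a_i=0\}$, $X_5=\{a_i=1,\ c_i=2\}$ satisfy (i)--(vi) automatically, so ($\beta$) says they violate (vii). That violation is exactly ``minimum $\le 1\le$ maximum'' for $\sum_{i\le m}b''_i$, where $b''_i=1$ on $X_1$, $b''_i\in[0,1]$ on $X_2$, $[-1,0]$ on $X_3$, $b''_i=-1$ on $X_4$, $b''_i=0$ on $X_5$, $[-1,1]$ on the other indices outside $A$, and $[-1,a_i-1]$ on $A'$. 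Your list of constraints also omits $b''_i\le a_i-1$, which is what pins down $X_3$, $X_4$ and $X_5$.
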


\begin{proof}
Set $\Pc:=\conv(\Dc(G,\mathfrak{c}))$ and and recall Lemma \ref{Osaka}.  It follows from the definition of $s$ together with $s=1$ that $(\Pc\setminus\partial\Pc)\cap\ZZ^n\neq\emptyset$. Furthermore, $c_i\geq 2$, for each $i\in [m+n]$.

\medskip

\noindent

{\bf (``If'')}
If ($\alpha$) holds, then \cite[Theorem 5.3]{HSF_5} says that $\Pc$ is Gorenstein. So, let ($\beta$) hold. We prove that $\Pc$ is nearly Gorenstein. Let $\ab=(a_1, \ldots, a_{m+n})$ belong to $\Pc\cap\ZZ^{m+n}$. Using induction on$$(c_{m+1}+\cdots c_{m+1})-(a_1+\cdots +a_m),$$we show that $\xb^\ab t\in \omega_{K[\Pc]}.\omega_{K[\Pc]}^{-1}$.

First, assume that $a_1+\cdots +a_m=c_{m+1}+\cdots +c_{m+n}$. If $c_{m+1}+\cdots + c_{m+n}=2$, then $n=1$ and $c_{m+1}=2$. Since $c_i\leq c_{m+1}+\cdots +c_{m+n}$ for each $i\in [m]$, we deduce that $c_1=\cdots =c_m=2$. Furthermore, it follows from $c_1+\cdots +c_m > c_{m+1}+\cdots +c_{m+n}$ that $m\geq 2$. This contradicts $s=1$. Thus $c_{m+1}+\cdots + c_{m+n}\geq 3$.

If there is $i\in A$ for which $a_i=c_i$, then $a_j=0$ for each $j\in [m]\setminus\{i\}$. In this case, $\xb^{\ab}t$ can be written as the product of $$\big(x_i^{c_i-m-1}(x_1\cdots x_mx_{m+1}^{\max\{1,a_{m+1}-1\}}\cdots x_{m+n}^{\max\{1,a_{m+n}-1\}}t\big)$$ and$$\frac{\xb^{\ab}}{\big(x_i^{c_i-m-1}(x_1\cdots x_mx_{m+1}^{\max\{1,a_{m+1}-1\}}\cdots x_{m+n}^{\max\{1,a_{m+n}-1\}}\big)}.$$So, $\xb^\ab t\in \omega_{K[\Pc]}.\omega_{K[\Pc]}^{-1}$. 

If there is $i\in A$ for which $a_i=c_i-1$, then there is $i'\in [m]\setminus \{i\}$ with $a_{i'}=1$. Furthermore, $a_j=0$ for each $j\in [m]\setminus\{i,i'\}$. Then the same argument as above shows that $\xb^\ab t\in \omega_{K[\Pc]}.\omega_{K[\Pc]}^{-1}$. 

So, we assume that for each $i\in A$, one has $a_i\leq c_i-2$.  Set 
\begin{eqnarray*}
    X_1&:=&\{i\in [m] : a_i=c_i\}, 
    \\X_2&:=&\{i\in [m] : a_i=c_i-1, \,  c_i\geq 3\},
    \\X_3&:=&\{a_i=1, c_i\geq 3\},
    \\X_4&:=&\{i\in [m] : a_i=0\},
    \\X_5&:=&\{i\in [m] : a_i=1, \, c_i=2\}.
\end{eqnarray*}
Then $X_1, \cdots, X_5$ are disjoint subsets of $[m]$. Obviously, ($\beta$) (iii), (iv) and (v)  hold for $X_1, \ldots, X_5$. On the other hand, for each  $i\in [m]\setminus (X_1\cup\cdots \cup X_5)$, one has $2\leq a_i\leq c_i-2$. In particular, ($\beta$) (vi) holds. Since$$a_1+\cdots +a_m=c_{m+1}+\cdots +c_{m+n},$$we conclude that

\begin{eqnarray*}
&& 2(m-|X_1|-|X_2|-|X_3|-|X_4|-|X_5|)
\\ &&\quad \qquad +\,\sum_{i\in X_1}c_i+\sum_{i\in X_2}(c_i-1)+|X_3|+|X_5|\\ &\leq & c_{m+1}+\cdots +c_{m+n}\\ &\leq & \sum_{i\in [m]\setminus (X_i\sqcup\cdots\sqcup X_5)}(c_i-2)+\sum_{i\in X_1}c_i+\sum_{i\in X_2}(c_i-1)+|X_3|+|X_5|. 
\end{eqnarray*}
The above inequalities show that ($\beta$) (i) and (ii) hold for $X_1, \ldots, X_5$. As a consequence, $X_1, \ldots, X_5$ do not satisfy ($\beta$) (vii). In other words, $$2|X_1|+|X_2|+|X_5|\leq m+1$$ and either $|X_3|+2|X_4|+|X_5|\leq m-1$ or
\begin{eqnarray*}
         && 2|X_1|+|X_2|+4|X_3|+4|X_4|+4|X_5|+4|A'|\\  &\leq &  3m-1+(c_{m+1}+\cdots +c_{m+n})-\sum_{i\in [m]\setminus (X_3\cup X_4\cup X_5\cup A)}c_i.  
        \end{eqnarray*}
If $$|X_3|+2|X_4|+|X_5|\leq m-1,$$ then
\begin{eqnarray*}
&& |X_1|+|X_2|-|X_4|+(m-
\sum_{i=1}^{5}|X_i|
-|A'|) +\sum_{i\in A'}(a_i-1)\\ & \geq & |X_1|+|X_2|-|X_4|+(m-\sum_{i=1}^{5}|X_i|-|A'|)+|A'|\geq 1.    
\end{eqnarray*}
If
\begin{eqnarray*}
&&
2|X_1|+|X_2|+4|X_3|+4|X_4|+4|X_5|+4|A'|\\  & \leq & 3m-1+(c_{m+1}+\cdots +c_{m+n})-\sum_{i\in [m]\setminus (X_3\cup X_4\cup X_5\cup A)}c_i,  
\end{eqnarray*}
then a routine computation using $a_1+\cdots +a_m=c_{m+1}+\cdots c_{m+n}$ says that  

\begin{eqnarray*}
&& |X_1|+|X_2|-|X_4|+(m-\sum_{i=1}^{5}|X_i|-|A'|)+\sum_{i\in A'}(a_i-1)
\\ & \geq& m-|X_3|-2|X_4|-|X_5|-2|A'|+(c_{m+1}+\cdots +c_{m+n})\\ 
& -&\sum_{i\in X_1}c_i-\sum_{i\in X_2}(c_i-1)-|X_3|-|X_5| -\sum_{i\in [m]\setminus (X_1\cup\cdots\cup X_5\cup A)}(c_i-2)\\ 
& =&3m-2|X_1|-|X_2|-4|X_3|-4|X_4|-4|X_5|-4|A'|+(c_{m+1}+\cdots +c_{m+n})\\ &&\, \, \, \, \, \, \, \, \, \, \, \, \, \, \, \,  -\sum_{i\in [m]\setminus (X_3\cup X_4\cup X_5\cup A)}c_i \\&\geq& 1.
\end{eqnarray*}

\noindent
It follows from the above argument together with $$2|X_1|+|X_2|+|X_5|\leq m+1$$ that 
\begin{eqnarray*}
&& -(m-\sum_{i=1}^{5}|X_i|)+|X_1|-|X_3|-|X_4|
\\ 
&\leq& 1 
\\ 
&\leq&  |X_1|+|X_2|-|X_4|+(m-\sum_{i=1}^{5}|X_i|-|A'|)+\sum_{i\in A'}(a_i-1).
\end{eqnarray*}
These inequalities imply that there are integers $a_1', \ldots, a_m'$ with $a_1'+\cdots + a_m'=1$ for which 

$\bullet$ $a_i'=1$ for each $i\in X_1$;

$\bullet$ $0\leq a_i'\leq 1$ for each $i\in X_2$;

$\bullet$ $-1\leq a_i'\leq 0$ for each $i\in X_3$;

$\bullet$ $a_i'=-1$ for each $i\in X_4$;

$\bullet$ $a_i'=0$ for each $i\in X_5$;

$\bullet$ $-1\leq a_i'\leq 1$ for each $i\in [m]\setminus (X_1\sqcup\cdots \sqcup X_5\sqcup A')$; 

$\bullet$ $-1\leq a_i'\leq a_i-1$ for each $i\in A'$.

\noindent
Putting $a_i'=\min\{1, a_i-1\}$ for each $i\in [m+n]\setminus [m]$ and defining $\ab':=(a_1', \ldots, a_{m+n}')$, one has $$\xb^\ab t=(\xb^{\ab-\ab'}t)(\xb^{\ab'}).$$By the choice of $\ab'$, it follows that $\xb^{\ab'}\in \omega_{K[\Pc]}^{-1}$. We prove $\xb^{\ab-\ab'}t\in \omega_{K[\Pc]}$. Obviously, each component of $\ab-\ab'$ is positive. Furthermore, the definition of $\ab'$ shows that for each $i\in [m+n]$, one has $a_i-a_i'\leq c_i-1$. On the other hand,
\begin{eqnarray*}
&& (a_1-a_1')+\cdots +(a_m-a_m')
\\&=&(a_1+\cdots +a_m)-(a_1'+\cdots +a_m')\\ & =&(c_{m+1}+\cdots + c_{m+n})-1,  
\end{eqnarray*}
where the last equality follows from $a_1'+\cdots +a_m'=1$ and $$a_1+\cdots +a_m=c_{m+1}+\cdots +c_{m+n}.$$ Thus $\xb^{\ab-\ab'}t\in \omega_{K[\Pc]}$. This shows that $\xb^\ab t\in \omega_{K[\Pc]}.\omega_{K[\Pc]}^{-1}$, as required.

Second, assume that $a_1+\cdots +a_m < c_{m+1}+\cdots +c_{m+n}$. Then there is $i\in [m]$ with $x_i\xb^{\ab}t\in K[\Pc]$. Using induction hypothesis, there are $\xb^{\ab_1}t^{\ell_1}\in \omega_{K[\Pc]}$ together with $\xb^{\ab_2}t^{\ell_2}\in \omega_{K[\Pc]}^{-1}$ for which$$x_i\xb^{\ab}t=(\xb^{\ab_1}t^{\ell_1})(\xb^{\ab_2}t^{\ell_2}).$$If the $i$th component of $\ab_2$ is nonnegative, then $\xb^{\ab_2}t^{\ell_2}/x_i\in \omega_{K[\Pc]}^{-1}$. In this case$$\xb^{\ab}t=(\xb^{\ab_1}t^{\ell_1})(\xb^{\ab_2}t^{\ell_2}/x_i)\in \omega_{K[\Pc]}.\omega_{K[\Pc]}^{-1},$$and we are done. So, suppose that the $i$th component of $\ab_2$ is negative. It follow from $x_i\xb^{\ab}=\xb^{\ab_1}\xb^{\ab_2}$ that the $i$th component of $\ab_1$ is at least two. Then $(\xb^{\ab_1}t^{\ell_1})/x_i\in \omega_{K[\Pc]}$ and a similar argument as above yields that $\xb^{\ab}t\in \omega_{K[\Pc]}.\omega_{K[\Pc]}^{-1}$.

\medskip

\noindent

{\bf (``Only If'')}
Suppose that $\Pc$ is nearly Gorenstein. We show that ($\alpha$) or ($\beta$) holds. If $|(P\setminus \partial\Pc)\cap \ZZ^{m+n}|=1$. then $\Pc$ is pseudo-Gorenstein. In this case, we deduce from \cite{Miyashita} that $\Pc$ is Gorenstein. Hence, ($\alpha$) holds (\cite[Theorem 5.3]{HSF_5}). Let $|(\Pc\setminus \partial\Pc)\cap\ZZ^{m+n}|\geq 2$. We prove that ($\beta$) holds.

Let $\bb=(b_1, \ldots, b_{m+n})\in \Pc\cap \ZZ^{m+n}$ satisfy $$b_1+\cdots +b_m=c_{m+1}+\cdots + c_{m+n}.$$Since $$c_1+\cdots +c_m>c_{m+1}+\cdots c_{m+n},$$such a point $\bb$ exists. As $\Pc$ is nearly Gorenstein, one has $\xb^{\bb}t\in \omega_{K[\Pc]}.\omega_{K[\Pc]}^{-1}$. Therefore, there are $\xb^{\bb'}t^p\in \omega_{K[\Pc]}$ and $\xb^{\bb''}t^{q}\omega_{K[\Pc]}^{-1}$ for which $$\xb^{\bb}t=(\xb^{\bb'}t^{p})(\xb^{\bb''}t^{q})=\xb^{\bb'+\bb''}t^{p+q}.$$Therefore, $\bb'+\bb''=\bb$ and $p+q=1$. Since $x_1\cdots x_{m+n}t\in \omega_{K[\Pc]}$ and $\xb^{\bb''}t^{q}\in \omega_{K[\Pc]}^{-1}$, it follows that
\begin{eqnarray}
\label{eeeeeeeeeee}   
(x_1\cdots x_{m+n}t)(\xb^{\bb''}t^{q})\in K[\Pc].
\end{eqnarray}
In particular, $q\geq -1$. Let $q=-1$. Then it follows from (\ref{eeeeeeeeeee}) that $\xb^{\bb''}=x_1^{-1}\cdots x_{m+n}^{-1}$. We deduce from $|(\Pc\setminus\partial\Pc)\cap\ZZ^{m+n}|\geq 2$ that $x_i(x_1, \cdots x_{m+n})t\in \omega_{K[\Pc]}$ for some $i\in [m+n]$. Hence, it follows from $\xb^{\bb''}t^{q}\in\omega_{K[\Pc]}^{-1}$ that
\begin{align*}
& x_i=x_i(x_1\cdots x_{m+n}t)(x_1^{-1}\cdots x_{m+n}^{-1}t^{-1})=x_i(x_1x_2\cdots x_{m+n}t)(\xb^{\bb''}t^{q})\in K[\Pc],
\end{align*}
a contradiction. Thus $q\geq 0$. Since $p\geq 1$ and $p+q=1$, we conclude that $$(p,q)=(1,0).$$
Since $c_1+\cdots +c_m> c_{m+1}+\cdots +c_{m+n}$, there is an integer $N\geq 1$ for which$$\sum_{i=1}^m(Nc_i-1)\geq N(c_{m+1}+\cdots +c_{m+n})-1.$$Hence, there is $\db=(d_1, \ldots, d_{m+n})\in N(\Pc\setminus\partial\Pc) \cap \ZZ^{m+n}$ with $$d_1+\cdots +d_m=N(c_{m+1}+\cdots +c_{m+n})-1.$$In particular, $\xb^{\db}t^N\in\omega_{K[\Pc]}$. Since 
$\xb^{\bb''}\in\omega_{K[\Pc]}^{-1}$, we deduce that $\bb''+\db\in N\Pc$. Therefore,
\begin{eqnarray}
\label{gggggg}
b_1''+\cdots +b_m''\leq N(c_{m+1}+\cdots +c_{m+n})-(d_1+\cdots +d_m)=1.
\end{eqnarray}

For each $i\in [m]\setminus A$, we have $c_i< c_{m+1}+\cdots +c_{m+n}$. So,  there is an integer $N'\geq 1$ for which$$(N'c_i-1)+m\leq N'(c_{m+1}+\cdots +c_{m+n}).$$In particular, $x_i^{N'c_i-2}(x_1\cdots x_{m+n})t^{N'}\in \omega_{K[\Pc]}$. It follows from $\xb^{\bb''}\in\omega_{K[\Pc]}^{-1}$ that$$x_i^{Nc_i-2}(x_1\cdots x_{m+n})\xb^{\bb''}t^{N'}\in {K[\Pc]}.$$Hence $b_i''\leq 1$.

Everything we proved so far is valid for each $\bb=(b_1, \ldots, b_{m+n})\in \Pc\cap \ZZ^{m+n}$ with$$b_1+\cdots +b_m=c_{m+1}+\cdots +c_{m+n}.$$In what follows, we choose an appropriate $\bb\in \Pc\cap \ZZ^{m+n}$ satisfying the above equality. Recall that our goal is to show that ($\beta$) holds. By contraction, assume that ($\beta$) does not hold. Then there are disjoint subsets $X_1, \ldots X_5$ of $[m]$ satisfying ($\beta$) (i) -- (vii). Set

$\bullet$ $b_i=c_i$ for each $i\in X_1$;

$\bullet$ $b_i=c_i-1$ for each $i\in X_2$;

$\bullet$ $b_i=1$ for each $i\in X_3$;

$\bullet$ $b_i=0$ for each $i\in X_4$;

$\bullet$ $b_i=1$ for each $i\in X_5$; and

\noindent
Using ($\beta$) (i) and (ii), one easily sees that  
\begin{eqnarray*}
&& 2(m-\sum_{i=1}^{5} |X_i|)+\sum_{i\in X_1\sqcup\cdots\sqcup X_5}b_i \\ &\leq& c_{m+1}+\cdots +c_{m+n}\\ & \leq & \sum_{i\in [m]\setminus (X_1\sqcup\cdots\sqcup X_5)}(c_i-2)+\sum_{i\in X_1\sqcup\cdots\sqcup X_5}b_i.  
\end{eqnarray*}
The above inequalities together with ($\beta$) (vi) imply that for each $$i\in [m]\setminus (X_1\sqcup\cdots\sqcup X_5),$$ there is an integer $b_i$ with $2\leq b_i\leq c_i-2$ for which $$b_1+\cdots +b_m=c_{m+1}+\cdots +c_{m+n}.$$We choose$$(\, b_i \, )_{\,i \, \in \,[m]\setminus (X_1\sqcup\cdots\sqcup X_5)},$$for which $\sum_{i\in A'}b_i$ is as small as possible. In other words, if $b_k\geq 3$ for some $k\in A'$, then $b_i=c_i-2$ for each $i\in [m]\setminus (X_1\sqcup\cdots\sqcup X_5\sqcup A')$. Set $b_j:=0$ for each $j\in [m+n]\setminus [m]$.  Define $\bb:=(b_1, \ldots, b_{m+n})\in \Pc\cap\ZZ^{m+n}$. We know from (\ref{eeeeeeeeeee}) that each component of $\bb''$ is at least $-1$. Since $b_i'\leq c_i-1$ for each $i\in [m+n]$, we deduce that $b_i''\geq 1$ for each $i\in X_1$. Similarly, $b_i''\geq 0$ for each $i\in X_2\sqcup X_5$. Hence,
\begin{eqnarray*}
&& 2|X_1|+|X_2|+|X_5|-m\\ & =& |X_1|-|X_3|-|X_4|-(m-|X_1|-|X_2|-|X_3|-|X_4|-|X_5|)\\ & \leq & b_1''+\cdots +b_m''\leq 1,   
\end{eqnarray*}
where the last inequality follows from (\ref{gggggg}). Therefore, we conclude from ($\beta$) (vii) that
\begin{eqnarray}
\label{gggggggg}
|X_3|+2|X_4|+|X_5|\geq m
\end{eqnarray}
and
        \begin{eqnarray*}
         & &2|X_1|+|X_2|+4|X_3|+4|X_4|+4|X_5|+4|A'|\\ & \geq &3m+(c_{m+1}+\cdots +c_{m+n})-\sum_{i\in [m]\setminus (X_3\cup X_4\cup X_5\cup A)}c_i.   
        \end{eqnarray*}
The following two cases arise.

\medskip

\noindent
{\bf (Case 1)} Let $b_i=2$ for each $i\in A'$. It follows from $b_i'\geq 1$ that $b_i''\leq 1$ for each $i\in A'$. If $j\in A\setminus A'$, then by ($\beta$) (v) and the definition of $A'$, one has $j\in X_3\sqcup X_4\sqcup X_5$. Since $b_j'\geq 1$, we deduce that $b_j''\leq 0$. Furthermore, this argument shows that for each $j\in X_4$, one has $b_j''=-1$. In addition, we already know that $b_k''\leq 1$, for $k\in [m]\setminus A$. Thus
\begin{eqnarray*}
&& b_1''+\cdots +b_m''\\ &\leq& |X_1|+|X_2|-|X_4|+(m-|X_1|-|X_2|-|X_3|-|X_4|-|X_5|)\\ & = & m-|X_3|-2|X_4|-|X_5|\leq 0,    
\end{eqnarray*}
where the last inequality follows from (\ref{gggggggg}). Therefore,$$b_1'+\cdots +b_m'=(b_1+\cdots +b_m)-(b_1''+\cdots +b_m'')\geq c_{m+1}+\cdots + c_{m+n},$$a contradiction.

\medskip

\noindent
{\bf (Case 2)} Let $b_k\geq 3$ for some $k\in A'$. It follows from the choice of $\bb$ that $b_j=c_j-2$ for each $j\in [m]\setminus (X_1\sqcup\cdots\sqcup X_5\sqcup A')$. For each $i\in A'$, it follows from $b_i'\geq 1$ that $b_i''\leq b_i-1$. By a similar argument as in Case 1 together with $$b_1+\cdots +b_m=c_{m+1}+\cdots +c_{m+n}$$ and 
\begin{eqnarray*}
         & &2|X_1|+|X_2|+4|X_3|+4|X_4|+4|X_5|+4|A'|\\ & \geq &3m+(c_{m+1}+\cdots +c_{m+n})-\sum_{i\in [m]\setminus (X_3\cup X_4\cup X_5\cup A)}c_i,   
        \end{eqnarray*}
a routine computation shows that 
\begin{eqnarray*}
&&  b_1''+\cdots +b_m''
\\ & \leq & |X_1|+|X_2|-|X_4|+(m-\sum_{i=1}^{5}|X_i|-|A'|)+\sum_{i\in A'}(b_i-1)
\\ & =& m-|X_3|-2|X_4|-|X_5|-2|A'|+(c_{m+1}+\cdots +c_{m+n})-\sum_{i\in [m]\setminus A'}b_i
\\ & =& m-|X_3|-2|X_4|-|X_5|-2|A'|+(c_{m+1}+\cdots +c_{m+n})\\ 
& -&\sum_{i\in X_1}c_i-\sum_{i\in X_2}(c_i-1)-|X_3|-|X_5| -\sum_{i\in [m]\setminus (X_1\cup\cdots\cup X_5\cup A)}(c_i-2)
\\ &\leq & 0.    
\end{eqnarray*}
Finally the same argument as in Case 1 implies that$$b_1'+\cdots +b_m'\geq c_{m+1}+\cdots +c_{m+n},$$a contradiction.
\hspace{10.9cm}
\end{proof}

\begin{Lemma} \label{compbiplevs1}
Under the situation of Lemma \ref{compbips1}, if $\conv(\Dc(K_{m,n},\mathfrak{c}))$ is nearly Gorenstein, then it is level.   
\end{Lemma}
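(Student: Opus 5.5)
We need to show: when $s=1$ (so $a(K[\Pc])=-1$, interior lattice points exist, all $c_i\ge 2$), nearly Gorenstein implies level. Level with $s=1$ means: for every $N\ge 2$ and every $\ab\in N(\Pc\setminus\partial\Pc)\cap\ZZ^{m+n}$ we must write $\ab=\ab'+\ab''$ with $\ab'\in(\Pc\setminus\partial\Pc)\cap\ZZ^{m+n}$ and $\ab''\in(N-1)\Pc\cap\ZZ^{m+n}$. By Lemma \ref{compbips1}, nearly Gorenstein means ($\alpha$) or ($\beta$). If ($\alpha$) holds, $\Pc$ is Gorenstein by \cite[Theorem 5.3]{HSF_5}, hence level. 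So the work is under ($\beta$), and, as in Lemma \ref{compbiplevs3} and Lemma \ref{compbiplevs2}, I plan a greedy decomposition, using Lemma \ref{Osaka} for the defining inequalities. Write $\ell:=c_{m+1}+\cdots+c_{m+n}$.

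First I would handle the coordinates $i\in[m+n]\setminus[m]$ and $i\in B$, which are only boxed: set $a_i':=\min\{a_i,c_i-1\}$. Since $1\le a_i\le Nc_i-1$, we get $1\le a_i'\le c_i-1$ and $0\le a_i''\le (N-1)c_i$. For $i\in A$ there is no upper box, so any $1\le a_i'$ is allowed there. The remaining constraint to meet is
\[
a_1'+\cdots+a_m'\le \ell-1 \quad\text{and}\quad \sum_{i\in[m]}(a_i-a_i')\le (N-1)\ell,
\]
so we need to choose $1\le a_i'\le\min\{a_i,c_i-1\}$ (no cap $c_i-1$ for $i\in A$) whose sum $\Sigma'$ lies in $[\sum_{i\le m}a_i-(N-1)\ell,\ \ell-1]$. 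Because $\sum_{i\le m}a_i\le N\ell-1$, the lower end is at most $\ell-1$, so the interval is nonempty. The sum of the $a_i'$ can be any integer from $m$ up to $M:=\sum_{i\in[m]}\min\{a_i,c_i-1\}$ (with $a_i$ itself for $i\in A$). Thus it suffices to show (a) $m\le \ell-1$ and (b) $M\ge \sum_{i\le m}a_i-(N-1)\ell$.

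Condition (a) is exactly $\lfloor m/\ell\rfloor+1=1$, i.e.\ $s=1$ forces $m<\ell$. For (b), the deficit $\sum_i (a_i-\min\{a_i,c_i-1\})$ is supported on $i\in B$ with $a_i\ge c_i$, and each such term is at most $(N-1)c_i$. So (b) fails only when many $B$-coordinates are nearly saturated. Here I would use the hypothesis ($\beta$): from a putative failing $\ab$ I would construct disjoint sets $X_1,\dots,X_5\subseteq[m]$ (mirroring the ``If'' direction of Lemma \ref{compbips1}: $X_1,X_2$ indexing saturated coordinates of $B$, $X_3,X_4,X_5$ recording small values) satisfying ($\beta$)(i)--(vii), which would contradict ($\beta$). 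Alternatively, I would trade slack: reduce $a_i'$ on saturated coordinates and use the fact that $c_i\ge 2$ everywhere to bound the deficit by $(N-1)\sum_{i\in B,\,a_i\ge c_i}c_i$ compared with $(N-1)\ell$.

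The main obstacle is step (b): verifying that the deficit never exceeds $(N-1)\ell-\big(\sum_{i\le m}a_i-M\big)$. Condition ($\beta$) is highly technical, and matching a bad $\ab$ to a forbidden configuration $X_1,\dots,X_5$, in particular checking (vii), is where I expect the real work. I would first try the direct counting bound, showing $\sum_{i\in B,\,a_i\ge c_i}c_i\le\ell$ whenever $\sum_{i\le m}a_i\le N\ell-1$ with each $a_i\le Nc_i-1$. If that succeeds, ($\beta$) is not even needed beyond ruling out degenerate small cases such as $\ell=2$, which is excluded by $s=1$ exactly as in Lemma \ref{compbips1}.
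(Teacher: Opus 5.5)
There is a genuine gap, and it starts in the reduction. For $i\in B$ you verify $a_i''\le (N-1)c_i$ only for the particular choice $a_i'=\min\{a_i,c_i-1\}$. Once you lower $a_i'$ to reach a prescribed sum, you also need $a_i'\ge \max\{1,\,a_i-(N-1)c_i\}$, and this lower bound can be as large as $c_i-1$. So the claim that $\sum_{i\le m}a_i'$ can take every value from $m$ to $M$ is false. Condition (a) must be replaced by
\[
|A|+\sum_{i\in B}\max\{1,\,a_i-(N-1)c_i\}\le \ell-1,
\]
and this does not follow from $s=1$. Concretely, take $m=3$, $n=2$, $\cc=(3,2,2,2,2)$. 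Then $\ell=4$, $A=\emptyset$ and $s=1$. Take $N=2$ and $\ab=(5,1,1,1,1)\in 2(\Pc\setminus\partial\Pc)\cap\ZZ^{5}$. Here $m=3\le \ell-1$ and $M=4\ge 7-4$, so your (a) and (b) both hold. Yet the only interior lattice point of $\Pc$ is $(1,1,1,1,1)$, and $\ab-(1,1,1,1,1)=(4,0,0,0,0)\notin\Pc$. So no decomposition exists and this $\Pc$ is not level. It is pseudo-Gorenstein but not Gorenstein, hence not nearly Gorenstein, so there is no conflict with the lemma; it only shows your reduction is wrong.

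The same example shows the second half of the plan cannot work as written. Since $s=1$ polytopes need not be level, no argument that avoids the hypothesis can succeed. Your fallback counting bound $\sum_{i\in B,\,a_i\ge c_i}c_i\le\ell$ is also false: for $\cc=(2,2,2,2,2)$, $N=2$, $\ab=(2,2,2,1,1)$ it gives $6>4$. The step where nearly Gorensteinness must enter is excluding failure of (b) and of the corrected (a), and that is exactly the step you leave as an unexecuted plan (``construct $X_1,\dots,X_5$ satisfying ($\beta$)(i)--(vii)'').

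After the correction, your two failure modes are precisely the two non-levelness conditions of \cite[Theorem 2.5]{HSF_6}, on which the paper's proof rests:
\begin{itemize}
\item failure of (b) forces some $\emptyset\ne Y\subset B$ with $\ell<\sum_{Y}c_i<\ell-m+2|Y|-1$;
\item failure of the corrected (a) forces some $Z\subseteq B$ with $\sum_{Z}c_i<\ell\le \sum_{Z}c_i+m-2|Z|$.
\end{itemize}
The paper starts from that characterization and never uses ($\beta$). In each case it picks a lattice point $\bb$ with $b_1+\cdots+b_m=\ell$, for instance the exponent vector of $x_1^{c_1}\cdots x_{k-1}^{c_{k-1}}x_k^{b_k}$. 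It writes $\xb^{\bb}t=(\xb^{\bb'}t)(\xb^{\bb''})$ with $\xb^{\bb'}t\in\omega_{K[\Pc]}$ and $\xb^{\bb''}\in\omega_{K[\Pc]}^{-1}$. It then uses the bounds from the proof of Lemma \ref{compbips1}: $b_i''\ge -1$ for all $i$, $b_i''\le 1$ for $i\in B$, and $b_1''+\cdots+b_m''\le 1$. These bounds contradict the defining inequalities of each case.
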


\begin{proof}
We follow the notations of Lemma \ref{compbips1} and its proof. Recall that $B=[m]\setminus A$. Suppose that $\Pc$ is not level. According to \cite[Theorem 2.5]{HSF_6}, the following cases arise:

\medskip

\noindent
{\bf (Case 1)} Assume that there is a subset $\emptyset \neq X\subset B$ for which 
\begin{eqnarray}
\label{bbb}
c_{m+1}+\cdots + c_{m+n}<\sum_{i\in X}c_i< c_{m+1}+\cdots + c_{m+n}-m+2|X|-1.  
\end{eqnarray}
Choose a subset $X$ of $B$ satisfying the above inequalities for which $|X|$ is minimum. It follows from the above inequalities that $2|X|\geq m+3$. Without loss of generality, let $X=[k]$, where $k$ is an integer with $1\leq k\leq m$. Since $c_k\geq 2$, the second inequality of (\ref{bbb}) shows that$$\sum_{i\in [k-1]}c_i< c_{m+1}+\cdots + c_{m+n}-m+2(k-1)-1.$$Hence, the minimality of $k=|X|$ yields that$$\sum_{i\in [k-1]}c_i\leq c_{m+1}+\cdots +c_{m+n}.$$This implies that there is integers $b_k$ with  $0\leq b_k\leq c_k$ for which $$c_1+\cdots +c_{k-1}+b_k=c_{m+1}+\cdots +c_{m+n}.$$In particular, $x_1^{c_1}\cdots x_{k-1}^{c_{k-1}}x_k^{b_k}t\in K[\Pc]$. Since $\Pc$ is nearly Gorenstein, we deduce from the proof of Lemma \ref{compbips1} that there are $\xb^{\bb'}t\in\omega_{K[\Pc]}$ and $\xb^{\bb''}\in \omega_{K[\Pc]}^{-1}$ for which $x_1^{c_1}\cdots x_{k-1}^{c_{k-1}}x_k^{b_k}=\xb^{\bb'+\bb''}$.  Set $\bb'=(b_1', \ldots, b_{m+n}')$ and $\bb''=(b_1'', \ldots, b_{m+n}'')$. We know from the proof of Lemma \ref{compbips1} that $b_k''\geq -1$. Hence $b_k'\leq b_k+1$. The same argument shows that $b_{k+1}'=\cdots = b_m'=1$. Then 
\begin{eqnarray*}
&& b_1'+\cdots +b_m' \\ &\leq & (c_1-1)+\cdots +(c_{k-1}-1)+(b_k+1)+m-k\\ & =&(c_{m+1}+\cdots +c_{m+n})-(k-1)+1+m-k\\ & =&(c_{m+1}+\cdots +c_{m+n})+m-2k+2.    
\end{eqnarray*}
Since $x_1^{c_1}\cdots x_{k-1}^{c_{k-1}}x_k^{b_k}=\xb^{\bb'+\bb''}$ and $$c_1+\cdots +c_{k-1}+b_k=c_{m+1}+\cdots +c_{m+n},$$ we conclude that
\begin{align*}
& b_1''+\cdots +b_m''\geq 2k-m-2=2|X|-m-2\geq 1.    
\end{align*}
On the other hand, we know from inequality (\ref{gggggg}) in the proof of Lemma \ref{compbips1} that $b_1''+\cdots +b_m''\leq 1$. Hence, the above inequality shows that $2k=m+3$ and $b_k'=b_k+1$. In particular, $b_k\leq c_k-2$. Therefore, $$\sum_{i\in [k]}c_i\geq c_{m+1}+\cdots +c_{m+n}+2.$$Then the second inequality of (\ref{bbb}) implies that $-m+2k-1\geq 3$, contradicting $2k=m+3$.

\medskip

\noindent
{\bf (Case 2)} Assume that there is a subset $\emptyset \neq X\subset [m]$ for which 
\begin{eqnarray}
\label{abcdefg}
\sum_{i=1}^{m}c_i-\sum_{i\in X}c_i < c_{m+1}+\cdots + c_{m+n}\leq 2|X|+\sum_{i=1}^{m}c_i-\sum_{i\in X}c_i-m. 
\end{eqnarray}
As in Case 1, we choose a nonempty subset $X$ of $[m]$ satisfying the above inequality for which $|X|$ is minimum. Since $c_j\geq 2$, we deduce from the above inequalities that for each $j\in X$, the inequality$$c_{m+1}+\cdots +c_{m+n}\leq 2|X\setminus\{j\}|+\sum_{i=1}^{m}c_i-\sum_{i\in X\setminus\{j\}}c_i-m$$holds. As a consequence, the minimality of $|X|$ implies that$$\sum_{i=1}^{m}c_i-\sum_{i\in X\setminus \{j\}}c_i \geq c_{m+1}+\cdots + c_{m+n},$$for each $j\in X$.  Set $X'=[m]\setminus X$. Let, say, $X'=[\ell]$, where $\ell$ is an integer with $1\leq \ell\leq m-1$. Then the first inequality of  (\ref{abcdefg}) and the above argument imply that$$\sum_{i=1}^{\ell}c_i<c_{m+1}+\cdots +c_{m+n}\leq \sum_{i=1}^{\ell+1}c_i.$$Hence, there is an integer $b_{\ell+1}$ with $1\leq b_{\ell+1}\leq c_{\ell+1}$ for which$$c_1+\cdots +c_{\ell}+b_{\ell+1}=c_{m+1}+\cdots +c_{m+n}.$$In particular, $x_1^{c_1}\cdots x_{\ell}^{c_{\ell}}x_{\ell+1}^{b_{\ell+1}}t\in K[\Pc]$. Since $\Pc$ is nearly Gorenstein, we deduce from the proof of Lemma \ref{compbips1} that there are $\xb^{\bb'}t\in\omega_{K[\Pc]}$ and $\xb^{\bb''}\in \omega_{K[\Pc]}^{-1}$ for which $x_1^{c_1}\cdots x_{\ell}^{c_{\ell}}x_{\ell+1}^{b_{\ell+1}}=\xb^{\bb'+\bb''}$.  Set $\bb'=(b_1', \ldots, b_{m+n}')$ and $\bb''=(b_1'', \ldots, b_{m+n}'')$. It follows from $c_1+\cdots +c_{\ell}< c_{m+1}+\cdots +c_{m+n}$ that $c_i\neq c_{m+1}+\cdots +c_{m+n}$, for each $i\in [\ell]$. Hence, the proof of Lemma \ref{compbips1} says that $b_1'', \ldots, b_{\ell}''$ are at most one. This yields that $b_i'\geq c_i-1$ for each $i\in [\ell]$. Since $b_{\ell+1}', \ldots, b_m'$ are positive, one has
\begin{eqnarray*}
&& b_1'+\cdots +b_m' \\ &\geq& \sum_{i=1}^{\ell}(c_i-1)+(m-\ell) \\ &=& \sum_{i=1}^{\ell}c_i+m-2\ell \\ &\geq& c_{m+1}+\cdots +c_{m+n},    
\end{eqnarray*}
where the last inequality follows from the second inequality of  (\ref{abcdefg}). This is a contradiction.
\hspace{1.4cm}
\end{proof}

Summarizing Lemmas \ref{compbips3}, \ref{compbips2} and \ref{compbips1}, we obtain the following theorem.

\begin{Theorem} \label{compbipmain}
Set$$s:=\max\bigg\{\Big\lfloor\frac{m}{c_{m+1}+\cdots +c_{m+n}}\Big\rfloor+1, \, \max\Big\{\Big\lfloor\frac{5-c_i}{2}\Big\rfloor: i\in [m+n]\Big\}\bigg\}.$$If $|\Bc(K_{m,n},\mathfrak{c})|\geq 2$, then 
$\conv(\Dc(K_{m,n},\mathfrak{c}))$ is nearly Gorenstein if and only if one of the following conditions holds.

\begin{itemize}
    \item [(1)] $s=3$ and conditions (ii) and (iii) of Lemma \ref{compbips3} hold.
    
    \item [(2)] $s=2$ and one of the conditions ($\alpha$), ($\beta$), ($\gamma$) and ($\delta$) of Lemma \ref{compbips2} holds.
    
    \item [(3)] $s=1$ and one of the conditions ($\alpha$) and ($\beta$) of Lemma \ref{compbips1} holds.
   \end{itemize} 
\end{Theorem}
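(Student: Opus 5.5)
This is a summarizing theorem, so the plan is to assemble Lemmas \ref{compbips3}, \ref{compbips2} and \ref{compbips1}, after making sure the standing normalizations and the value of $s$ match. First, the hypothesis $|\Bc(K_{m,n},\mathfrak{c})|\geq 2$ excludes the balanced case $c_1+\cdots+c_m=c_{m+1}+\cdots+c_{m+n}$. In that case \cite[Lemma 2.2]{HSF_6} gives a single generator and $\Pc$ is a cube. Hence, after swapping the two sides if necessary, we may assume $c_1+\cdots+c_m>c_{m+1}+\cdots+c_{m+n}$. Replacing each $c_i>c_{m+1}+\cdots+c_{m+n}$ with $i\in[m]$ by $c_{m+1}+\cdots+c_{m+n}$ leaves $\Bc(K_{m,n},\mathfrak{c})$, and hence $\Pc$, unchanged. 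So the standing hypotheses of the section hold, and Lemma \ref{Osaka} describes $\Pc$.

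Next, I would check that $s$ equals the $a$-invariant index $\min\{N\geq 1 : N(\Pc\setminus\partial\Pc)\cap\ZZ^{m+n}\neq\emptyset\}$. By Lemma \ref{Osaka}, an interior lattice point of $N\Pc$ must satisfy:
\begin{itemize}
\item every coordinate at least $1$;
\item $x_i\leq Nc_i-1$ for $i\notin A$;
\item $x_1+\cdots+x_m\leq N(c_{m+1}+\cdots+c_{m+n})-1$.
\end{itemize}
The minimal such $N$ is governed by two conditions:
\begin{itemize}
\item $m\leq N(c_{m+1}+\cdots+c_{m+n})-1$, which gives $N\geq \lfloor m/(c_{m+1}+\cdots+c_{m+n})\rfloor+1$;
\item $Nc_i\geq 2$ for each $i$, which forces $N\geq 2$ exactly when some $c_i=1$.
\end{itemize}
The term $\lfloor (5-c_i)/2\rfloor$ equals $2$ when $c_i=1$ and is at most $1$ otherwise, so it encodes exactly the second condition. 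Thus $s\in\{1,2\}$ is the case split of Lemmas \ref{compbips2} and \ref{compbips1}. When $s\geq 3$, the first term dominates and coincides with the $s$ of Lemma \ref{compbips3}.

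Finally, I would split by the value of $s$:
\begin{itemize}
\item If $s\geq 3$, Lemma \ref{compbips3} says $\Pc$ is nearly Gorenstein iff $s=3$ together with (ii) and (iii) of that lemma. This is item (1).
\item If $s=2$, Lemma \ref{compbips2} gives item (2).
\item If $s=1$, Lemma \ref{compbips1} gives item (3).
\end{itemize}
Since $s\geq 1$ always, these cases are exhaustive and mutually exclusive. The only real point requiring care is the identification of the paper's two formulas for $s$ in the $s\geq3$ regime, together with the reduction/normalization step. I expect the verification that the normalization preserves $\Bc(K_{m,n},\mathfrak{c})$ and respects the hypothesis $|\Bc|\geq 2$ to be the main (though mild) obstacle. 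It is handled by citing \cite[Lemma 2.2]{HSF_6} and the remarks preceding Lemma \ref{Osaka}.
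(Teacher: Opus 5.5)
Your proposal is correct and takes the same route as the paper, which obtains the theorem by splitting on the value of $s$ and invoking Lemmas \ref{compbips3}, \ref{compbips2} and \ref{compbips1}. Your extra bookkeeping is exactly what the paper leaves implicit. First, $|\Bc(K_{m,n},\mathfrak{c})|\geq 2$ puts you in the unbalanced, normalized setting of the section. Second, the term $\max_i\lfloor (5-c_i)/2\rfloor$ is at most $2$, so for $s\geq 3$ the theorem's $s$ coincides with the $s$ of Lemma \ref{compbips3}.
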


The following corollary is an immediate consequence of Lemmas \ref{compbiplevs3}, \ref{compbiplevs2} and \ref{compbiplevs1}.

\begin{Corollary}
\label{final}
Every nearly Gorenstein $K_{m,n}$-polytope is level.   
\end{Corollary}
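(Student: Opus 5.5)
The plan is a case split on the shape of $\Bc(K_{m,n},\mathfrak{c})$, followed by invoking the lemmas already proved. Let $\Pc$ be a nearly Gorenstein $K_{m,n}$-polytope, so $\Pc=\conv(\Dc(K_{m,n},\mathfrak{c}))$ for some $\mathfrak{c}\in(\ZZ_{>0})^{m+n}$.

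\emph{Case $c_1+\cdots+c_m=c_{m+1}+\cdots+c_{m+n}$.} By \cite[Lemma 2.2]{HSF_6}, $\Bc(K_{m,n},\mathfrak{c})$ is the singleton $\{x_1^{c_1}\cdots x_{m+n}^{c_{m+n}}\}$. Hence $\Pc=\Qc_{c_1,\ldots,c_{m+n}}$ is a cube, and Lemma \ref{cube2}(3) shows it is level with no further hypothesis.

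\emph{Case the two sums differ.} By the symmetry of $K_{m,n}$ we may swap the two parts and assume $c_1+\cdots+c_m>c_{m+1}+\cdots+c_{m+n}$. Replacing each $c_i$ with $i\in[m]$ by $\min\{c_i,\,c_{m+1}+\cdots+c_{m+n}\}$ does not change $\Bc(K_{m,n},\mathfrak{c})$, so $\Pc$ is unchanged and we land in the standing hypotheses of Section~2. Lemma \ref{Osaka} then describes $\Pc$ by inequalities.

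Next, define $s$ as in Theorem \ref{compbipmain} and check that $s\in\{1,2,3,\ldots\}$ equals $\min\{N\ge1 : N(\Pc\setminus\partial\Pc)\cap\ZZ^{m+n}\neq\emptyset\}$. This follows from the inequality description: interior points need $1\le x_i\le Nc_i-1$ and $\sum_{i\le m}x_i\le N\sum_{j>m}c_j-1$. Then apply whichever lemma matches $s$: Lemma \ref{compbiplevs3} when $s\ge3$, Lemma \ref{compbiplevs2} when $s=2$, and Lemma \ref{compbiplevs1} when $s=1$. Each of these says that if $\Pc$ is nearly Gorenstein, then it is level. (For $s\ge3$ the second entry in the definition of $s$ is at most $2$, so $s$ agrees with the quantity in Lemma \ref{compbips3}.)

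The main obstacle is mild bookkeeping. First, confirm that the normalizations (swapping the parts and truncating the $c_i$) preserve both properties, which holds because $\Pc$ itself is unchanged. Second, confirm that the three values of $s$ exhaust all possibilities, which holds since $s\ge1$ always.
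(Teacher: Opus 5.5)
Your proposal is correct and follows the paper's route. The cube case is settled by Lemma~\ref{cube2}(3); otherwise the normalization of Section~2 reduces to Lemmas~\ref{compbiplevs3}, \ref{compbiplevs2} and \ref{compbiplevs1} according to the value of $s$, which is exactly the paper's one-line deduction.

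There is one small bookkeeping slip, and the paper's own setup shares it. When the heavier side is a single vertex ($m=1$ after the swap), truncating $c_1$ to $c_2+\cdots+c_{n+1}$ makes the two sums equal. You then land in the cube case, not in the standing hypotheses of Section~2. Doing the truncation before splitting on the sums fixes this.
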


\begin{Example}
(1) If $m=4, n=2$ and $\cc=(1,2,2,2,1,1)$, then $s=3$ and $\conv(\Dc(K_{4,2},\mathfrak{c}))$ is nearly Gorenstein.

(2) If $m=3, n=3$ and $\cc=(3,3,1,1,1,1)$, then $s=2$ and $\conv(\Dc(K_{3,3},\mathfrak{c}))$ is nearly Gorenstein.

(3) If $m=2, n=2$ and $\cc=(3,2,2,2)$, then $s=1$ and $\conv(\Dc(K_{2,2},\mathfrak{c}))$ is nearly Gorenstein.
\end{Example}

Let $(a,\mathfrak{c}) = (a, c_1, \ldots, c_{n})\in (\ZZ_{>0})^{n+1}$ be a sequence for which $$a > c_1 \geq \cdots \geq c_{n} \geq 2, \quad a \geq n+1, \quad a < c_1+\cdots + c_{n}.$$  The convex polytope $\Qc^\sharp_{(a; \mathfrak{c})} \subset \RR^{n}$ which is defined by the system of linear inequalities 

$\bullet$ $0\leq x_i\leq c_i, \, \, \, \, \, i=1\ldots, n$;

$\bullet$ $x_1+\cdots +x_{n}\leq a$

\noindent
is called a convex polytope {\em of Veronese type}.  It then follows from \cite[Section $3$]{HSF_6} together with Corollary \ref{final} that 

\begin{Corollary}
Every nearly Gorenstein polytope of Veronese type is level.  
\end{Corollary}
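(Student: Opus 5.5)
The plan is to reduce the statement to Corollary \ref{final}, using the identification of polytopes of Veronese type with $K_{m,n}$-polytopes from \cite[Section 3]{HSF_6}. Given $(a,\cc)$ with $a>c_1\geq\cdots\geq c_n\geq 2$, $a\geq n+1$ and $a<c_1+\cdots+c_n$, I would look for a complete bipartite graph $K_{n,n'}$ and a bound vector $\cc'$ such that $\conv(\Dc(K_{n,n'},\cc'))$ equals $\Qc^\sharp_{(a;\cc)}$ up to a unimodular equivalence. The most natural choice is to give the first part of the bipartition the bounds $c_1,\ldots,c_n$ and the second part total bound $a$, for instance $a$ vertices each with bound $1$, or a single vertex with bound $a$. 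The normalization at the start of Section 2 then holds: $c_1+\cdots+c_n>a$, and each $c_i\le a$, and in fact $c_i<a$, so the set $A$ of Lemma \ref{Osaka} is empty.

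By Lemma \ref{Osaka}, $\conv(\Dc(K_{n,n'},\cc'))\subset\RR^{n+n'}$ is then cut out by three families of inequalities: $0\le x_i\le c_i$ for $i\le n$; $0\le x_j\le c_j'$ for the second-part coordinates; and $x_1+\cdots+x_n\le a$. This is exactly the product $\Qc^\sharp_{(a;\cc)}\times\Qc_{c'_{n+1},\ldots}$ of the Veronese-type polytope with a box.

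The main obstacle is that a product with a box is not literally $\Qc^\sharp_{(a;\cc)}$. I would handle this by checking how nearly Gorensteinness and levelness transfer along this product, since the Ehrhart ring of the product is the Segre product of the two Ehrhart rings.

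\begin{itemize}
\item \textbf{Preferred route.} Choose $n'=1$ and a single second-part vertex with bound $c'=a$. Then the box factor is the segment $[0,a]$. Its Ehrhart ring is a Veronese subring of $K[x,t]$, which is Gorenstein only for $a\le 2$. So this choice does not obviously preserve nearly Gorensteinness, and I do not expect it to work directly.
\item \textbf{Better route.} Rely on what \cite[Section 3]{HSF_6} establishes, namely that every polytope of Veronese type arises this way, with the second part consisting of $a$ vertices of bound $1$ or $2$. The factor is then a unit cube, or a cube with all sides $2$, which is Gorenstein by Lemma \ref{cube2}(1), with the same $a$-invariant as the other factor. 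For such a Gorenstein factor I would show directly that $\ab$ lies in the interior of $N$ times the product if and only if each component lies in the interior of $N$ times its own factor.
\end{itemize}

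From that product description of interior points, I would derive two transfer statements. First, $\omega$ of the product is generated in the same degrees as $\omega$ of the Veronese-type factor, so levelness is equivalent for the two polytopes. Second, writing elements of $\omega\cdot\omega^{-1}$ componentwise, the trace ideal of the product is the Segre product of the trace ideals, so nearly Gorensteinness is also equivalent for the two polytopes.

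With both transfers in hand, suppose $\Qc^\sharp_{(a;\cc)}$ is nearly Gorenstein. Then the corresponding $K_{n,n'}$-polytope is nearly Gorenstein, hence level by Corollary \ref{final}, and therefore $\Qc^\sharp_{(a;\cc)}$ is level. The delicate step is the degree matching that makes the Segre-product transfer valid. If it fails, I would fall back on a direct rerun of Lemmas \ref{compbiplevs3}–\ref{compbiplevs1}, in which the second-part coordinates are simply absent.
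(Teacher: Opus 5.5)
Your overall strategy is right, and it is also the shape of the paper's argument: the paper simply invokes Section~3 of the cited work together with Corollary~\ref{final}. By Lemma~\ref{Osaka}, a $K_{n,n'}$-polytope whose second part has total bound $a$ is $\Qc^\sharp\times(\text{box})$ with $\Qc^\sharp:=\Qc^\sharp_{(a;\cc)}$. One must then push nearly Gorensteinness up and levelness down across this product. But the version you commit to does not work.

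\textbf{The unit cube fails.} Since $c_i\ge 2$ and $a\ge n+1$, the point $(1,\ldots,1)$ is interior, so $a(K[\Qc^\sharp])=-1$. The unit cube $[0,1]^a$ has $a$-invariant $-2$, not $-1$. The product then has $s=2$, so its levelness says nothing about interior points of $2\Qc^\sharp$. Nearly Gorensteinness is also genuinely lost. For instance, $\Qc^\sharp_{(4;3,2)}$ is nearly Gorenstein: it is the first factor in item (3) of the Example in Section~2, and one checks it directly. But $\Qc^\sharp_{(4;3,2)}\times[0,1]^4$, the $K_{2,4}$-polytope with $\cc'=(3,2,1,1,1,1)$, is not nearly Gorenstein by Lemma~\ref{compbips2}, since $3\notin\{1,2,4\}$. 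Indeed, because of the facet $x_1\le 3$, its $\omega^{-1}$ has no element of $t$-degree $-1$.

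\textbf{The cube with sides $2$ covers only even $a$.} The cube $[0,2]^{a/2}$ does have $a$-invariant $-1$; note that your ``$a$ vertices of bound $2$'' would have total bound $2a$, not $a$. But this cube exists only for $a$ even. A Gorenstein box has all sides $1$ or all sides $2$ (Lemma~\ref{cube2}(1)), so no Gorenstein box of odd total side length $a$ has $a$-invariant $-1$.

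\textbf{The trace claim is false.} The trace of $\omega$ of a product is not the Segre product of the traces. Only products whose splits $p+q=1$ agree in both factors occur; the example above has both factors nearly Gorenstein. This is exactly the degree matching you flag but leave open. So the argument as written covers at most even $a$, and the fallback is not a proof.

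\textbf{What fixes it.} The box factor need not be Gorenstein, and the route you dismissed works. Take $K_{n,1}$ with $\cc'=(c_1,\ldots,c_n,a)$. The standing hypotheses of Section~2 hold, $A=\emptyset$, and the polytope is $\Qc^\sharp\times[0,a]$ with $a\ge 3$. So both factors have $a$-invariant $-1$, and the interior lattice points of $N(\Qc^\sharp\times[0,a])$ are exactly pairs of interior lattice points.

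Levelness descends. Given $\ab\in N(\Qc^\sharp\setminus\partial\Qc^\sharp)\cap\ZZ^n$, pair it with any interior point of $N[0,a]$, decompose in the level product, and project.

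For the other transfer, you may assume $\Qc^\sharp$ is not pseudo-Gorenstein; otherwise it is Gorenstein by Miyashita's result, hence level. Then every witness $\xb^{\ab}t=(\xb^{\bb}t^p)(\xb^{\bb'}t^q)$ has $(p,q)=(1,0)$. Indeed $p\ge 1$ and $q\ge -1$, and $q=-1$ would force $\bb'=-\mathbf{u}$ for every interior lattice point $\mathbf{u}$ of $\Qc^\sharp$. On the segment, $x_{n+1}^{-1}$, $1$ and $x_{n+1}$ lie in $\omega^{-1}_{K[[0,a]]}$. So $x_{n+1}^et=(x_{n+1}^ft)\,x_{n+1}^{e-f}$ with $f=\min\{\max\{e,1\},a-1\}$ is also a witness of type $(1,0)$.

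A degree-$0$ Laurent monomial whose two parts lie in the respective $\omega^{-1}$ lies in $\omega^{-1}$ of the product. Multiplying the two witnesses therefore gives the chain: $\Qc^\sharp$ nearly Gorenstein $\Rightarrow$ $\Qc^\sharp\times[0,a]$ nearly Gorenstein $\Rightarrow$ $\Qc^\sharp\times[0,a]$ level (Corollary~\ref{final}) $\Rightarrow$ $\Qc^\sharp$ level.
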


\section*{Statements and Declarations}
The authors have no Conflict of interest to declare that are relevant to the content of this article.

\section*{Data availability}
Data sharing does not apply to this article as no new data were created or analyzed in this study.

\end{document}